\titleformat{\section}{\normalfont\scshape\centering}{\thesection}{1em}{}
\titleformat{\subsection}{\bfseries}{\thesubsection}{1em}{}
\newtheorem{theorem}{Theorem}[section]
\newtheorem{lemma}[theorem]{Lemma}
\newtheorem{proposition}[theorem]{Proposition}
\theoremstyle{definition}
\newtheorem{remark}[theorem]{Remark}
\numberwithin{equation}{section}
\renewcommand{\Re}{\textnormal{Re}}
\renewcommand\d{\textnormal{d}}
\let\oldpmod\pmod
\renewcommand{\pmod}[1]{\hspace{-0.1cm}\oldpmod {#1}}
\begin{document}

\title{Products of primes in arithmetic progressions}

\author{Kaisa Matom\"aki}
\address{Department of Mathematics and Statistics, University of Turku, 20014 Turku, Finland}
\email{ksmato@utu.fi}

\author{Joni Ter\"{a}v\"{a}inen}
\address{Department of Mathematics and Statistics, University of Turku, 20014 Turku, Finland}
\email{joni.p.teravainen@gmail.com}

\begin{abstract} A conjecture of Erd\H{o}s states that, for any large prime $q$, every reduced residue class $\pmod q$ can be represented as a product $p_1p_2$ of two primes $p_1,p_2\leq q$. We establish a ternary version of this conjecture, showing that, for any sufficiently large cube-free integer $q$, every reduced residue class $\pmod q$ can be written as $p_1p_2p_3$ with $p_1,p_2,p_3\leq q$ primes. We also show that, for any $\varepsilon > 0$ and any sufficiently large integer $q$, at least $(2/3-\varepsilon)\varphi(q)$ reduced residue classes $\pmod q$ can be represented as a product $p_1 p_2$ of two primes $p_1, p_2 \leq q$.

The problems naturally reduce to studying character sums. The main innovation in the paper is the establishment of a multiplicative dense model theorem for character sums over primes in the spirit of the transference principle. In order to deal with possible local obstructions we establish bounds for the logarithmic density of primes in certain unions of cosets of subgroups of $\mathbb{Z}_q^\times$ of small index and study in detail the exceptional case that there exists a quadratic character $\psi \pmod{q}$ such that $\psi(p) = -1$ for very many primes $p \leq q$. 
\end{abstract}

\maketitle

\section{Introduction}
For integers $k,q\geq 1$ and a real number $N \geq 1$, write 
\[
E_k(N) = \{a \in \mathbb{Z}_q^{\times} \colon a \equiv p_1 \dotsm p_k \pmod{q} \text{ for some primes $p_1, \dotsc, p_k \leq N$}\}, 
\]
where $\mathbb{Z}_q^{\times}$ is the set of reduced residue classes $\pmod q$. We study the size of $E_k(N)$ for $k\in \{2,3\}$. 

Let us first note that when $k=1$, Linnik's problem can be formulated as asking for the smallest $L$ such that $E_1(Cq^{L})=\mathbb{Z}_q^{\times}$ for some constant $C$ and all large enough $q$. Linnik~\cite{Linnik1, Linnik2} first proved the existence of such constant $L$ and the best known exponent today is $L=5$ by a result of Xylouris~\cite{xylouris} (which is a slight improvement of an earlier result of Heath-Brown~\cite{Heath-Brown2}).  

Concerning the case $k=2$, Erd\H{o}s conjectured (see~\cite{Erdos}) that $E_2(q)=\mathbb{Z}_q^{\times}$ for all large enough primes $q$. This remains open even under the generalized Riemann hypothesis. In this paper we establish a ternary variant of Erd\H{o}s' conjecture:
\begin{theorem}
\label{th:E3q}
\begin{enumerate}[(i)]
\item Let $q \in \mathbb{N}$ be cube-free and sufficiently large. Then
\[
E_3(q) = \mathbb{Z}_q^{\times}.
\]
\item Let $\varepsilon>0$ and let $q \in \mathbb{N}$ be sufficiently large in terms of $\varepsilon$. Then
\[
E_3(q^{1+\varepsilon}) = \mathbb{Z}_q^{\times}.
\]
\end{enumerate}
\end{theorem}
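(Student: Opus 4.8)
\emph{Reduction to character sums.} For part (i) set $N=q$ and for part (ii) set $N=q^{1+\varepsilon}$. Given $a\in\mathbb{Z}_q^{\times}$, the weighted number of representations of $a$ as $p_1p_2p_3$ with $p_i\le N$ is
\[
R(a):=\sum_{\substack{p_1p_2p_3\equiv a\Mod q\\ p_1,p_2,p_3\le N}}(\log p_1)(\log p_2)(\log p_3)=\frac{1}{\varphi(q)}\sum_{\chi\Mod q}\overline{\chi}(a)\,S_\chi^{3},\qquad S_\chi:=\sum_{p\le N}\chi(p)\log p,
\]
so it suffices to prove $R(a)>0$ for every $a$. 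The principal character contributes the main term $\varphi(q)^{-1}S_{\chi_0}^{3}=\varphi(q)^{-1}(N+o(N))^{3}\asymp N^{3}/\varphi(q)$, and everything hinges on controlling $\varphi(q)^{-1}\sum_{\chi\ne\chi_0}\overline{\chi}(a)S_\chi^{3}$. The key difficulty is that $q$ is here as large as $N$ (or $N^{1/(1+\varepsilon)}$), so neither an individual bound for $S_\chi$ coming from a zero-free region nor the large-sieve estimate $\sum_{\chi}|S_\chi|^{2}\ll(q+N)N\log N$ is by itself strong enough to push a pointwise Fourier argument through; the saving from having three primes must be exploited more efficiently. Morally, the gain over Linnik's problem (which needs primes up to $q^{5}$ for a single residue) is that a threefold product only requires each factor to be mildly equidistributed on ``most'' of $\mathbb{Z}_q^{\times}$, the remaining residues being filled in by a sumset phenomenon.

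\emph{The dense model theorem and transference.} The plan is to establish, and then apply, a multiplicative dense model theorem: to the weight $f$ supported on primes $p\le N$ with $p\nmid q$ and $f(p)=\log p$, one attaches a function $g:\mathbb{Z}_q^{\times}\to[0,O(1)]$ of density $\gg 1/\log N$ whose character coefficients agree with those of $f$ at $\chi_0$ and at any exceptional real character $\psi$, with $\widehat{f}(\chi)-\widehat{g}(\chi)$ suitably small for every other $\chi$. Its proof rests on log-free zero-density estimates of Linnik type (in the form due to Jutila or Heath-Brown) to handle the bulk of the characters, together with the Deuring--Heilbronn phenomenon when an exceptional $\psi$ is present. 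A transference argument (a restriction estimate controlling $f-g$ in an appropriate norm, followed by a sumset/Kneser-type step) then yields $f\ast f\ast f(a)\ge(1-o(1))\,g\ast g\ast g(a)$ for all $a$, where $\ast$ denotes multiplicative convolution on $\mathbb{Z}_q^{\times}$, so the problem reduces to bounding $g\ast g\ast g$ from below.

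\emph{Local obstructions and the generic case.} When no exceptional character exists, $g$ may be taken essentially constant, whence $g\ast g\ast g(a)\gg N^{3}/\varphi(q)$ uniformly in $a$ and we are done. In general one must first exclude genuine local obstructions: if a subgroup $H\le\mathbb{Z}_q^{\times}$ of small index $h$ contained (in the sense of logarithmic density) almost all primes $p\le q$, then products $p_1p_2p_3$ would be confined to a few cosets of $H$ and $E_3(q)=\mathbb{Z}_q^{\times}$ could fail. I would rule this out by bounding $\sum_{p\le q,\,p\in\bigcup\text{cosets}}1/p$ for primes lying in small unions of cosets of small-index subgroups; such bounds follow from zero-free regions for the characters of $\mathbb{Z}_q^{\times}/H$, where Landau's theorem forbids a second exceptional modulus and an exceptional complex character, leaving $h=2$ as the only surviving index. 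For $q$ cube-free this pins the obstruction down to a single quadratic character $\psi$; in part (ii) the extra length $q^{\varepsilon}$ supplies enough slack to handle all $q$, which is why the cube-free hypothesis can be dropped there.

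\emph{The exceptional quadratic-character case: the main obstacle.} Suppose $\psi\Mod q$ is quadratic with a Siegel zero $\beta$ close to $1$; equivalently $\psi(p)=-1$ for all but $O_\eta(N^{1-\eta})$ of the primes $p\le N$. Then $\widehat{f}(\psi)=S_\psi\approx-N^{\beta}$ is large, and since $\overline{\psi}(a)S_\psi^{3}<0$ exactly when $\psi(a)=1$, the term $\chi=\psi$ cancels essentially all of the main term for those $a$, leaving only $g\ast g\ast g(a)\gg_\eta(1-\beta)N^{3}/\varphi(q)\gg_\eta N^{3-\eta}/\varphi(q)$ by Siegel's ineffective lower bound $1-\beta\gg_\eta q^{-\eta}$, which is what quantifies the density of the thin set of primes with $\psi(p)=1$. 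The crux of the whole argument is to keep the transference error below this very small quantity: the Deuring--Heilbronn phenomenon forces every $L(s,\chi)$ with $\chi\ne\chi_0,\psi$ to have a wide zero-free region whose width grows with $\log(1/(1-\beta))$, and one must play this enlarged saving off against the weak Siegel lower bound, using log-free zero-density estimates for the remaining characters. Making this balance work when $N=q$ is precisely what forces the cube-free hypothesis in part (i), the factor $q^{\varepsilon}$ again providing the room in part (ii). I expect this balancing — together with the accompanying careful study of the thin exceptional set of primes and its distribution in $\mathbb{Z}_q^{\times}$ — to be where essentially all of the difficulty lies.
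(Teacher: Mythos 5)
Your skeleton (orthogonality, a multiplicative dense model, transference, Kneser, local obstructions, a special quadratic-character case) matches the paper's architecture, but the proofs you propose for the two load-bearing components would not go through, and your explanation of where the cube-free hypothesis enters is not the right one. First, the dense model theorem cannot be established by log-free zero-density estimates plus Deuring--Heilbronn: for a single modulus $q$ with the prime sum only of conductor length $p\le q$, zero-density information gives essentially nothing pointwise about $S_\chi$, and the paper stresses exactly this obstruction (it is also why the zero-free-region method of Klurman--Mangerel--Ter\"av\"ainen works only for smooth or almost all prime $q$). The paper instead builds the dense model $g$ from a multiplicative Bohr set, and the only analytic inputs are (A1) a linear-sieve majorant whose nonprincipal character sums are small by the \emph{Burgess bound}, and (A2) a count of characters with large $S_\chi$ coming from a duality/Hal\'asz--Montgomery argument. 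This is also precisely where cube-freeness enters: Burgess for cube-free $q$ gives cancellation in sums of length $q^{1/4+\varepsilon}$, yielding a set $A$ of density $\ge 3/8-\varepsilon$, whereas for general $q$ one only reaches length $q^{1/3+\varepsilon}$ and density $1/3-\varepsilon$, which is too small for the Kneser step unless one lengthens the primes to $q^{1+\varepsilon}$. It has nothing to do with balancing a Siegel zero against Deuring--Heilbronn.

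Second, your claim that Landau-type arguments leave index $2$ as the only surviving local obstruction is false: after Kneser (in the popular-sums form of Grynkiewicz) the surviving obstructions are subgroups of index $3k+2$ with $k\in\{0,1,2\}$, i.e.\ indices $2$, $5$ and $8$, and the indices $5$ and $8$ require a genuine additional argument (the paper's Lemma~\ref{le:k=12}, a positivity computation with $\Re\sum_p\chi(p)p^{-1}\log p\,f(\log p/\log q)$ in the style of Heath-Brown and Szab\'o). Finally, the index-$2$ exceptional case is handled in the paper without any zero-repulsion: a Chebyshev-type argument applied to $(1\ast\psi)(n)$ with the fundamental lemma of the sieve shows that either the primes with $\psi(p)=1$ have positive logarithmic density below $q^{5/7}$, or some dyadic range contains $\gg yL(1,\psi)\frac{\varphi(q)}{q}\prod_{\psi(p)=1}(1-\frac 2p)$ of them; a Hal\'asz--Montgomery inequality weighted by $1\ast\psi$ then makes the transference lossless for this possibly very sparse set, with Siegel's theorem used only to ensure $L(1,\psi)\gg_\delta q^{-\delta}$ dominates power-saving error terms. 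As written, your proposal leaves the dense model theorem unproved and the index $5$ and $8$ obstructions unaddressed, so it does not constitute a proof.
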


Previously, Walker had shown in~\cite{walker} that $E_{48}(q)=\mathbb{Z}_q^{\times}$ and later in~\cite{Walker-PhD} that $E_{20}(q)=\mathbb{Z}_q^{\times}$. This was improved independently by Szab\'o~\cite{szabo} and Zhao~\cite{Zhao} to $E_6(q) = \mathbb{Z}_q^\times$ (Szab\'o~\cite{szabo} needed to assume that $q$ is prime), so we get significantly closer to the conjecture of Erd\H{o}s.

On the other hand, several authors have considered the least $N$ for which  $E_3(N)=\mathbb{Z}_q^{\times}$. The best results in the literature are that $N = 650q^3$ works for all $q \geq 2$ due to Ramar\'e, Srivastav and Serra~\cite{RSS}, and that $N = q^{6/5+\varepsilon}$ works for all sufficiently large $q$ due to Szab\'o~\cite{szabo} (improving on~\cite{BRS}). Theorem~\ref{th:E3q} shows that $N = q$ works for every sufficiently large cube-free $q$, and that $N=q^{1+\varepsilon}$ works for every $q$ that is sufficiently large in terms of $\varepsilon$. Actually our techniques would allow one to establish that, for some quite small $\alpha > 0$, one has $E_3(q^{1-\alpha}) = \mathbb{Z}_q^\times$ for all sufficiently large cube-free $q$.

We also note that Klurman, Mangerel and the second author~\cite{KMT} proved that, once $\varepsilon > 0$ is sufficiently small, we have $E_3(q)=\mathbb{Z}_q^{\times}$ for all large enough $q^{\varepsilon}$-smooth numbers $q$, and also $E_3(q)=\mathbb{Z}_q^{\times}$ for all but $O(1)$ primes $q\in [Q^{1/2},Q]$ for any $Q\geq 1$. The method in~\cite{KMT} depends on the quality of the zero-free regions available for Dirichlet $L$-functions $\pmod q$, and hence it does not work for arbitrary $q$. In case when $q$ is smooth, one has better character sum estimates, and due to this our methods would allow one to prove $E_3(q^{1-\alpha})=\mathbb{Z}_q^{\times}$ for somewhat larger $\alpha$ than in the cube-free case.

\begin{remark}
In Theorem~\ref{th:E3q}(i), the assumption that $q \in \mathbb{N}$ is cube-free and sufficiently large could be weakened for instance to the assumption that, for some $\varepsilon>0$, the cube-full part of $q$ is $\leq q^{1/9-\varepsilon}$ and $q$ is sufficiently large in terms of $\varepsilon$. Let us briefly sketch why this is the case; we leave the details to the interested reader. In the proof of Theorem~\ref{th:E3q}(ii), we look at $E_3(q^{1+100\varepsilon})$ instead of $E_3(q)$ just because we need Proposition~\ref{prop:A'binter} to be applicable with $\theta=2/3+10\varepsilon$. But, as noted in Remark~\ref{rmk:burgess}, the only property of $\theta$ needed for Proposition~\ref{prop:A'binter} is that 
\begin{align}\label{eq:burgessbound}
\max_{\substack{\chi\pmod q\\\chi\neq \chi_0}}\, \max_{N^{1-\theta} \leq y \leq N} \frac{1}{y}\left|\sum_{n\leq y}\chi(n)\right|\ll N^{-\varepsilon_0}    
\end{align}
for some $\varepsilon_0>0$. By a direct consequence of the Burgess bound (see~\cite[(12.56)]{iw-kow}), we do have~\eqref{eq:burgessbound} with $N = q$ and $\theta = 2/3+10\varepsilon$ under the assumption that the cube-full part of $q$ is $\leq q^{1/9-100\varepsilon}$. Now the claim follows by adjusting $\varepsilon$. 
\end{remark}

We also consider another approximation toward the conjecture of Erd\H{o}s, namely the problem of lower-bounding the density of $E_2(q)$ inside $\mathbb{Z}_q^{\times}$, and prove the following.

\begin{theorem}\label{thm:E2>1/2} Let $\varepsilon>0$ and let $q \in \mathbb{N}$ be sufficiently large in terms of $\varepsilon$.
\begin{enumerate}[(i)] 
\item We have
\[
|E_2(q)| \geq \left(\frac{2}{3}-\varepsilon\right) \varphi(q).
\]
\item Assume that $q$ is cube-free. Then
\[
|E_2(q)| \geq \left(\frac{11}{16}-\varepsilon\right) \varphi(q).
\] 
\end{enumerate}
\end{theorem}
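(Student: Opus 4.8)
The plan is to reduce the problem to a statement about character sums over primes and then feed it into the multiplicative dense model / transference machinery advertised in the abstract. Write $P(N) = \{p \le N : p \text{ prime}\}$ and think of $E_2(q)$ as the support of the multiplicative convolution $\mathbf{1}_{P(q)} \ast \mathbf{1}_{P(q)}$ on $\mathbb{Z}_q^\times$. A residue class $a$ fails to lie in $E_2(q)$ precisely when, for every way of writing $a \equiv bc$, at least one of $b,c$ is not a prime $\le q$. The natural tool is a Fourier (i.e. Dirichlet character) expansion: for each $a \in \mathbb{Z}_q^\times$,
\[
r(a) := \#\{(p_1,p_2)\in P(q)^2 : p_1 p_2 \equiv a \Mod q\} = \frac{1}{\varphi(q)} \sum_{\chi \Mod q} \overline{\chi}(a) \Bigl(\sum_{p\le q}\chi(p)\Bigr)^2 .
\]
The principal-character term contributes $\approx \pi(q)^2/\varphi(q) \asymp q^2/(\varphi(q)(\log q)^2)$, which is a genuinely large main term, while the non-principal characters contribute the error. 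Thus $a \in E_2(q)$ unless the non-principal characters conspire to cancel the main term at $a$. The first step is therefore to show that the set of $a$ where such a conspiracy can happen has density at most $1/3 + \varepsilon$ (resp.\ $5/16+\varepsilon$ in the cube-free case).

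The mechanism producing a density loss is a \emph{local obstruction}: if there is a subgroup $H \le \mathbb{Z}_q^\times$ of small index $d$ such that almost all primes $p\le q$ lie in a single coset (or a small union of cosets) of $H$, then products of two primes land in very few cosets of $H$, killing a positive proportion of residue classes. The worst case is $d=3$: a cubic character $\psi$ with $\psi(p) = \omega$ (a fixed primitive cube root of unity) for almost all $p\le q$ would force $p_1p_2$ into the coset $\psi = \omega^2$, so $E_2(q)$ could be as small as $\tfrac13\varphi(q)$ — this is exactly why the constant $2/3$ appears and cannot be improved by this method in general. Since $\psi$ and $\overline\psi$ are genuinely complex for $d=3$, and this happens only for special $q$; the point is that for cube-free $q$ the index-$3$ obstruction is not available, and one can push the constant up to $11/16$, while the residual obstructions come from real (quadratic) characters, handled by the detailed study of the exceptional quadratic case mentioned in the abstract. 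So the second step is a case analysis: either no small-index subgroup nearly absorbs the primes, in which case the transference principle shows $r(a) > 0$ for all but $o(\varphi(q))$ values of $a$ (in fact one expects $E_2(q) = \mathbb{Z}_q^\times$ up to $o(\varphi(q))$ exceptions); or some small-index subgroup does, in which case one quantifies the density loss coset by coset using the logarithmic-density bounds for primes in unions of cosets, again cited in the abstract as the key input.

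Concretely, I would run the following steps. (1) Apply the multiplicative dense model theorem (the paper's main innovation) to replace $\mathbf{1}_{P(q)}$, suitably normalized, by a dense model function on $\mathbb{Z}_q^\times$ whose self-convolution is supported on essentially all of $\mathbb{Z}_q^\times$, up to a controlled obstruction set; this reduces matters to counting the residue classes lost to local obstructions. (2) Enumerate the relevant obstructions: they correspond to subgroups $H\le\mathbb{Z}_q^\times$ of index $d$ with $d$ bounded (only finitely many $d$ matter, since for $d$ large the primes cannot concentrate enough to cause a loss bigger than $\varepsilon\varphi(q)$, by the bounds for the logarithmic density of primes in unions of cosets). (3) For each such $d$, bound the density of $E_2(q)$ from below by the fraction of cosets of $H$ that are hit by a product of two "typical" primes; here one uses that the primes, even if concentrated, are not concentrated into too few cosets — quantified by those same logarithmic-density bounds — so the set of product-cosets has density bounded below, and within each such coset the transference argument fills in almost all residues. (4) Optimize over $d$: the infimum of the resulting lower bound is $2/3$, attained at $d=3$; when $q$ is cube-free, $\mathbb{Z}_q^\times$ has no element of order $3$ forced in a way that creates the $d=3$ obstruction except through a quadratic mechanism, and re-optimizing over the remaining (real-character) obstructions gives $11/16$. (5) Separate off the genuinely exceptional scenario — a quadratic $\psi\Mod q$ with $\psi(p)=-1$ for almost all $p\le q$ — and treat it by hand as in the abstract's description, checking that even then at least $(2/3-\varepsilon)\varphi(q)$ (resp.\ $(11/16-\varepsilon)\varphi(q)$) classes survive, because products of two such primes still cover the whole kernel of $\psi$, which has density $1/2$, and the transference principle shows almost all of that kernel is actually attained, and moreover a further density's worth of classes outside the kernel is picked up from the primes that are \emph{not} in the bad coset.

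The main obstacle, as I see it, is Step (3)–(5): controlling the density loss tightly enough to land exactly at $2/3$ (and $11/16$) rather than some weaker constant. This requires two nontrivial ingredients working together: first, a genuinely quantitative lower bound on how many cosets of a small-index subgroup contain a positive proportion of primes — i.e.\ ruling out that the primes collapse into too few cosets — which is where the logarithmic-density-of-primes-in-cosets estimates do the real work and where the exceptional quadratic character must be isolated and analyzed separately; and second, ensuring the dense model / transference step is uniform enough that, \emph{within} the union of "good" cosets, the convolution is positive for all but $o(\varphi(q))$ residues, with no further loss. Getting both of these simultaneously, and checking that the extremal configuration is precisely the index-$3$ (resp.\ the quadratic) one, is the delicate heart of the argument; the reduction to character sums and the verification of the main term are by comparison routine.
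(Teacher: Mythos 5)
Your high-level framework (orthogonality, dense model, Kneser-type structure for the product set, special treatment of the exceptional quadratic character) matches the paper's, but the quantitative heart of the argument is misidentified in a way that would make the proof fail. First, the constant $2/3$ does not come from an index-$3$ obstruction. If the primes really concentrated in one coset of an index-$3$ subgroup, then $E_2(q)$ could have density $1/3$, not $2/3$, so this scenario must be \emph{excluded}, not optimized over; the paper excludes it (and every index $\not\equiv 2\pmod 3$) because the linear sieve plus Burgess (Lemma~\ref{le:P2}, fed into Proposition~\ref{prop:transfer}(v)) forces the dense model's support to occupy at least a proportion $\approx\theta_0/2=3/8$ of the cosets of any bounded-index subgroup, whence Kneser makes the product set occupy all cosets unless the index is $3k+2$. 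The $2/3$ for general $q$ is instead the Burgess barrier: for arbitrary moduli one only gets $|A|\geq(1/3-\varepsilon)\varphi(q)$, and Kneser doubles this to $2/3$. Likewise $11/16$ for cube-free $q$ has nothing to do with cubic versus quadratic characters (a subgroup of index $3$ can exist whether or not $q$ is cube-free); cube-freeness enters only through the stronger Burgess range, giving $|A|\geq(3/8-\varepsilon)\varphi(q)$, and $11/16$ is the value of $(2\lceil 3Y/8\rceil-1)/Y$ at the first index $Y=16$ that the coset-counting cannot rule out.

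Second, and more seriously, in the surviving obstructed cases $Y=3k+2$ with $k\in\{0,1,2\}$, the structure analysis only yields $|E_2(q)|\geq((2k+1)/(3k+2)-\varepsilon)\varphi(q)$, i.e.\ $1/2$, $3/5$, $5/8$ --- all \emph{below} the target. Your proposal has no mechanism for closing this gap; your step (5) asserts that "a further density's worth of classes outside the kernel is picked up" without saying how. The paper's device is a multiplicative-energy/Cauchy--Schwarz argument: it locates one further coset $a_0H$ outside the $k+1$ prime-rich cosets that contains $\gg y q^{-\eta'}$ primes in some dyadic range (via Lemma~\ref{le:k=12}, or Lemma~\ref{le:QR} in the exceptional quadratic case), pairs these with the primes in a rich coset, and bounds the multiplicative energy $E_q^{\times}(\mathcal{P}_1,\mathcal{P}_2)$ from above by a sieve majorant and character-sum estimates, so that Cauchy--Schwarz shows the products cover an additional $1/3$, $7/45$, $1/9$ of $\mathbb{Z}_q^\times$ respectively. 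This step cannot be replaced by the transference principle, because the primes in $a_0H$ may be far too sparse (only a $q^{-\eta'}$ proportion of all primes in the range) for any dense-model statement to apply; without it the argument stalls at $5/8$.
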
 
Here $2/3 \approx 0.666\ldots$ and $11/16=0.6875$ whereas the previous record was due to Szab\'o~\cite{szabo} with $3/8 = 0.375$ in place of $11/16$ in the case that $q$ is cube-free (for general $q$, Szab\'o's method gives proportion $1/3-\varepsilon$). Note that Theorem~\ref{thm:E2>1/2}(i) immediately implies (taking $\varepsilon<1/6$) the previously unknown result that $E_4(q)=\mathbb{Z}_q^{\times}$ for all large $q$. 

In addition to studying $E_k(N)$ as in Theorems~\ref{th:E3q} and~\ref{thm:E2>1/2}, there is still another approximation toward the conjecture of Erd\H{o}s --- one can ask what is the least integer $k$ such that every $a \in \mathbb{Z}_q^\times$ can be written as $a = p n$ with $p, n \leq q$ and $n$ having at most $k$ prime factors. Our methods are not applicable to this binary problem that is in the literature approached through sieving the variable $n$. The best result on this problem is due to Zhao~\cite{Zhao} with $k = 5$. Zhao~\cite{Zhao} also obtained that every $a \in \mathbb{Z}_q^\times$ can be written as $a = p_1 p_2 n$ with $p_1, p_2, n \leq q$ and $n$ having at most two prime factors. Of course our Theorem~\ref{th:E3q} improves this last result to $n$ being prime in case $q$ is cube-free.

We shall give outlines of proofs in the following subsection, but let us here very briefly mention the main components of the proofs. The main innovation is the establishment of a multiplicative dense model theorem for character sums over primes in the spirit of the transference principle. The dense model theorem transfers our problems to additive combinatorial problems concerning product sets of positive density subsets of $\mathbb{Z}_q^\times$. To consider these latter problems, we use Kneser's theorem after which we need to deal with the case that the primes are concentrated in a union of cosets of a small index subgroup of $\mathbb{Z}_q^\times$. To deal with this case, we use some sieve theory to obtain lower bounds for the number of primes in unions of cosets of small index. In particular we need to take special care of the case when there exists an exceptional quadratic character $\psi \pmod{q}$ for which $\psi(p) = -1$ for very many primes $p \leq q$.

The multiplicative transference principle established here is likely to have other applications as well. In a forthcoming work joint with Merikoski, we shall consider Linnik's problem using sieve ideas and some of the techniques used in this work.

\subsection{Proof outline}
The starting point of the proof of Theorem~\ref{th:E3q}(i) is (essentially) the orthogonality identity
\begin{align}
\label{eq:starting2}
\sum_{\substack{p_1,p_2,p_3 \leq q \\ p_1 p_2 p_3 \equiv a \pmod{q} }} \log p_1 \log p_2 \log p_3 = \frac{1}{\varphi(q)} \sum_{\chi \pmod{q}} \left(\sum_{p \leq q} \overline{\chi}(p) \log p\right)^3 \chi(a).
\end{align}
Separating the contribution of the principal character $\chi_0\pmod q$ and applying the prime number theorem, this becomes
\begin{align*}
(1+o(1))\frac{q^3}{\varphi(q)}+\frac{1}{\varphi(q)} \sum_{\substack{\chi \pmod{q}\\\chi\neq \chi_0}} \left(\sum_{p \leq q} \overline{\chi}(p) \log p\right)^3 \chi(a).
\end{align*}
If we knew that we have
\begin{equation}
\label{eq:dream}
\max_{\substack{\chi\pmod q\\\chi\neq \chi_0}}\left|\sum_{p \leq q} \overline{\chi}(p) \log p\right| \ll \frac{q}{(\log q)^{10}},
\end{equation}
say, the claim would follow quickly by applying to~\eqref{eq:starting2} this bound together with the standard mean value theorem for character sums (see Lemma~\ref{le:MVT} below). However, unfortunately, there are no chances to unconditionally establish~\eqref{eq:dream} --- note that the sum is only up to $p \leq q$ with $q$ the conductor of the character.

We overcome this obstacle by establishing a dense model theorem for character sums over primes. This is motivated by the transference approach to additive problems, introduced by Green~\cite{green-annals} in the case of counting arithmetic progressions and generalized in~\cite{Li-PanGoldbach},~\cite{shao},~\cite{matomaki-shao},~\cite{mms} to deal with the ternary Goldbach equation. 

In particular Shao~\cite{shao} considered the ternary Goldbach problem for positive density subsets of the primes. More precisely, he proved that if a set $A \subseteq \mathbb{P}$ has lower relative density greater than $5/8$, then each sufficiently large odd integer $N$ can be represented as a sum of three primes from $A$. Taking $W=\prod_{p\leq w}p$ with $w$ growing to infinity and splitting into arithmetic progressions $\pmod{W}$, this problem reduces to studying a sumset problem in $\mathbb{Z}_W$ and to studying, for $b_i$ coprime to $W$,
\begin{align}\begin{split}
\label{eq:circlemethod}
&\sum_{\substack{n_1,n_2,n_3\leq N/W\\(Wn_1+b_1)+(Wn_2+b_2)+(Wn_3+b_3)=N \\ Wn_1+b_1, Wn_2+b_2, Wn_3+b_3 \in A}} \log n_1 \log n_2 \log n_3\\
&= \int_0^1 \left(\prod_{i=1}^3 \sum_{\substack{n \leq N/W}} \frac{W}{\varphi(W)}f_{b_i}(n) e(\alpha (Wn+b_i))\right) e(-\alpha N) \,\textnormal{d}\alpha,
\end{split}
\end{align}
where $f_{b_i} \colon [1, N/W] \cap \mathbb{N} \to \mathbb{R}_{\geq 0}$ are defined by $f_{b_i}(n) = \frac{\varphi(W)}{W}(\log n) \mathbf{1}_{Wn+b \in A}$. 

Now, for each $b$ with $(b, W) = 1$, one can find a dense model $g_b \colon [1, N/W] \cap \mathbb{N} \to [0,1]$ such that $\widehat{g_b}$ is close to $\widehat{f_b}$ in sup norm. Then one can replace $f_{b_i}$ by $g_{b_i}$ in~\eqref{eq:circlemethod} (since one has also an $L^r$ bound for $\widehat{f_b}$ and $\widehat{g_b}$ for some $r\in (2,3)$) and reduce the problem to studying sumsets of positive density subsets of the integers. The corresponding sumset problems can in principle be solved unless there are some "local" obstructions, such as $A$ being concentrated in an arithmetic progression or a Bohr set.

We argue similarly. In particular we essentially let $f \colon [1, q] \cap \mathbb{N} \to \mathbb{R}_{\geq 0}$ be defined through $f(n) = 1_{n \in \mathbb{P}} \log n$ and prove a multiplicative analogue of a dense model theorem, showing, using the Burgess bound for cube-free $q$ and a linear sieve majorant, that there exists $g \colon \mathbb{Z}_q^\times \to [0,\frac{8}{3}+\varepsilon]$ such that
\[
\max_{\chi\pmod q}\left|\sum_{n \leq q} f(n) \overline{\chi}(n) - \frac{q}{\varphi(q)}\sum_{a \in \mathbb{Z}_q^\times} g(a) \overline{\chi}(a)\right| = o(\varphi(q)).
\]
Moreover, by a Hal{\'a}sz--Montgomery type inequality and the mean value theorem (again in the form of Lemma~\ref{le:MVT}) we can show that $\widehat{f}$ and $\widehat{g}$ satisfy suitable $L^r$ bounds with $r \in (2, 3)$ (which we will formulate as large value estimates rather than $L^r$ bounds, see Remark~\ref{rem:Lrbound} below). 
Combining these facts with~\eqref{eq:starting2}, we can show that
\begin{align*}
\sum_{\substack{p_1,p_2,p_3 \leq q \\  p_1 p_2 p_3 \equiv a \pmod{q} }} \log p_1 \log p_2 \log p_3 &= \sum_{\substack{n_1,n_2,n_3 \leq q \\ n_1 n_2 n_3 \equiv a \pmod{q}}} f(n_1) f(n_2) f(n_3)  \\
&= \frac{q^3}{\varphi(q)^3} \sum_{\substack{a_1,a_2,a_3 \in \mathbb{Z}_q^\times \\ a_1 a_2 a_3 = a}} g(a_1)g(a_2)g(a_3) + o\left(\frac{q^3}{\varphi(q)}\right).
\end{align*}
Writing 
$$A = \{a \in \mathbb{Z}_q^\times \colon |g(a)| \geq \varepsilon/10\},$$
we obtain that $a \in E_3(q)$ if $(\mathbf{1}_A \ast \mathbf{1}_A \ast \mathbf{1}_A)(a) \gg \varphi(q)^2$. Furthermore, it follows from the construction of $g$ that
\[
|A| \geq \left(\frac{3}{8}-\varepsilon\right)\varphi(q).
\]

Using a popular sums version of Kneser's theorem (Lemma~\ref{le:popularkneser}), we have $(\mathbf{1}_A \ast \mathbf{1}_A \ast \mathbf{1}_A)(a) \gg \varphi(q)^2$ for all $a\in \mathbb{Z}_q^{\times}$ (and hence $E_3(q) = \mathbb{Z}_q^\times$) unless $A$ is essentially (up to a set of cardinality $o(\varphi(q))$) stuck in $k+1$ cosets of a subgroup $H\leq \mathbb{Z}_q^{\times}$ of index $3k+2$, for some $k \in \{0, 1, 2\}$, and $A \cdot A$ is essentially the union of $2k+1$ cosets of $H$. The dense model $g$ is defined in such a way that this means that also the primes $p \leq q$ are essentially stuck in these cosets. 

We are now left with three problematic scenarios: 
\begin{enumerate}[(i)]
    \item Almost all primes up to $q$ are contained in $3$ cosets of an index $8$ subgroup 
 of $\mathbb{Z}_q^{\times}$.
 \item Almost all primes up to $q$ are contained in $2$ cosets of an index $5$ subgroup 
 of $\mathbb{Z}_q^{\times}$.
 \item Almost all primes up to $q$ are contained in a coset of an index $2$ subgroup 
 of $\mathbb{Z}_q^{\times}$.
\end{enumerate}

None of these scenarios should happen in reality, but the existing bounds for least primes in cosets (for example~\cite[Theorem 2.4]{BRS}) do not seem to rule them out, and scenario (iii) could in fact happen if there were Siegel zeros.

Using a lemma of Heath-Brown on character sums over primes, we will be able to show that, for at least one of the remaining $2k+1$ cosets not occupied by $A$, say $a_0 H$, one has
\begin{equation}
\label{eq:sketchplow}
\sum_{\substack{p \in (q^{\alpha-\varepsilon}, q^\alpha] \\ p \in a_0 H}} \frac{1}{p} \gg 1 \quad \text{for some $\alpha \in [2\varepsilon, 1]$},
\end{equation}
except in the case when $k=0$ and $A$ is essentially contained in the coset $a_1 H \neq H$. This last case, corresponding to (iii) above, requires some special care as we will explain below. Also the bound~\eqref{eq:sketchplow} is not quite sufficient to refute the scenarios (i) or (ii), since above we would need a positive natural density of primes $p \leq q$ in the coset $a_0H$. However, in case~\eqref{eq:sketchplow} holds we study the logarithmically weighted count
\[
\sum_{\substack{p_1 p_2 p_3\equiv a\pmod q \\ p_2, p_3 \leq q \\ q^{\alpha-\varepsilon} < p_1 \leq q^{\alpha}}} \frac{\log p_2 \log p_3}{p_1} = \frac{1}{\varphi(q)} \sum_{\chi \pmod{q}} \left(\sum_{p \leq q} \overline{\chi}(p) \log p\right)^2 \left(\sum_{\substack{q^{\alpha-\varepsilon} < p_1 \leq q^{\alpha}}} \frac{\overline{\chi}(p_1)}{p_1}\right) \chi(a).
\]
In this case we use the dense model only for the primes $p_2, p_3$. Using again large value results for character sums (also for an appropriate moment of the character sum of $p_1$), we can show that
\begin{align*}
\sum_{\substack{p_1 p_2 p_3\equiv a\pmod q \\ p_2, p_3 \leq q \\ q^{\alpha-\varepsilon} < p_1 \leq q^{\alpha}}} \frac{\log p_2 \log p_3}{p_1} &= \left(\frac{q}{\varphi(q)}\right)^2\sum_{\substack{p_1 a_2 a_3=a \\ q^{\alpha-\varepsilon} < p_1 \leq q^{\alpha} \\ a_2, a_3 \in \mathbb{Z}_q^\times}} \frac{g(a_2) g(a_3)}{p_1} + o\left(\frac{q^2}{\varphi(q)}\right) \\
&\geq \varepsilon^2 \left(\frac{q}{\varphi(q)}\right)^2\sum_{\substack{p_1 a_2 a_3=a \\ q^{\alpha-\varepsilon} < p_1 \leq q^{\alpha} \\ a_2, a_3 \in A}} \frac{1}{p_1} + o\left(\frac{q^2}{\varphi(q)}\right),
\end{align*}
with $A$ as above. Since $A \cdot A$ is now essentially a union of $2k+1$ cosets, this together with~\eqref{eq:sketchplow} will allow us to finish the proof.

On the other hand, in case that $k=0$ and $A$ is essentially contained in the coset $a_1 H \neq H$ (so that we are in scenario (iii)), we can use sieve-theoretic arguments to show (see Lemma~\ref{le:QR}) that either~\eqref{eq:sketchplow} holds with $a_0 \in H$ or there exists $y\in [q^{5/7},q]$ such that
\begin{align}
\label{eq:sketchplowL1chi}
\sum_{\substack{p\leq y\\ \psi(p) = 1}} 1 \gg y L(1, \psi) \frac{\varphi(q)}{q}\prod_{\substack{2<p < q \\ \psi(p) = 1}} \left(1-\frac{2}{p}\right),
\end{align}
where $\psi$ is the quadratic character $\pmod{q}$ for which $\psi(h) = 1$ for every $h \in H$. The lower bound in~\eqref{eq:sketchplowL1chi} can be a lot smaller than $y/\log y$ if $L(1,\psi)$ is small (and hence we cannot rule out scenario (iii)), but we can use a sieve to construct a pseudorandom majorant for the primes $p\leq q$ with $\psi(p)=1$ that takes into account this smaller density of primes with $\psi(p)=1$, and this allows us to prove a lossless Hal\'asz--Montgomery type large value theorem for primes $p\leq y$ with $\psi(p) = 1$ (see Lemma~\ref{le:Hal-MonL1chi}). This in turn will allow us to make an appropriate transference and finish the proof.

In case $q$ is not cube-free, taking into account the version of Burgess for arbitrary moduli the previous argument would only give us
\begin{equation}
\label{eq:sketchAlow}
|A| \geq \left(\frac{1}{3}-\varepsilon\right) \varphi(q).
\end{equation}
This is not sufficient for a successful application of Kneser's theorem since we might have that $|A\cdot A\cdot A| \leq (1-3\varepsilon) \varphi(q)$ even if $A$ is not stuck in a union of arithmetic progressions. But once we study $E_3(q^{1+100\varepsilon})$ we can replace $1/3-\varepsilon$ by $1/3+\varepsilon$ in~\eqref{eq:sketchAlow} and the previous method works, since we can also show that $A$ occupies at least proportion $3/8-\varepsilon$ of cosets of any subgroup of $\mathbb{Z}_q^\times$ with fixed index.

In the proof of Theorem~\ref{thm:E2>1/2}, we argue similarly to the first part of the proof of Theorem~\ref{th:E3q} to obtain that either the claim holds or we have a similar local obstruction as in the ternary case. In this case, we are able to prove lower bounds for the number of products of two primes in the cosets not occupied by $A \cdot A$ through studying multiplicative energy.

\section{Notation}

\subsection{General notation}

Throughout the paper, we assume that $\varepsilon > 0$ is sufficiently small. We write $x\sim y$ as a shorthand for $y< x\leq 2y$. By $\varphi, \Lambda,$ and $\tau_k$ we denote the Euler totient function, the von Mangoldt function, and the $k$-fold divisor function (with $\tau = \tau_2$), as usual. For $z\geq 1$, we write $P(z)=\prod_{p<z}p$. We write $(a,b)$ for the greatest common divisor of integers $a$ and $b$. We write
\begin{equation}
\label{eq:Ndef}
[N]_q = \{n \leq N \colon (n, q) = 1\}.
\end{equation}
By~\eqref{eq:Simplegcd} below we have that
\begin{equation}
\label{eq:Nqsize}
|[N]_q| = \frac{\varphi(q)}{q} N + O(\tau(q)).
\end{equation}
For a finite, nonempty set $A$ and a function $f:A\to \mathbb{C}$, we use the notation
\[
\mathbb{E}_{n \in A}f(n) = \frac{1}{|A|}\sum_{n\in A}f(n).
\]

\subsection{Group theory} 

For a finite abelian group $G$ written in multiplicative notation and subsets $A,B\subseteq G$, define the product set $A\cdot B=\{ab:\,\, a\in A, b\in B\}$, and for $c\in G$ define the dilation $cA = c\cdot A=\{ca:\,\, a\in G\}$. For a subset $A \subseteq G$, the stabilizer of $A$ is the subgroup $\{g \in G \colon g \cdot A = A\}$.

The convolution of two functions $f,g:G\to \mathbb{C}$ is defined as
\begin{align*}
(f*g)(a)=\sum_{\substack{b,c\in G\\bc=a}}f(b)g(c). 
\end{align*}

By $\chi_0\pmod q$ we denote the principal Dirichlet character $\pmod q$. For a subgroup $H\leq \mathbb{Z}_q^\times$ and an element $b \in \mathbb{Z}_q^\times$, we abuse the notation by writing, for $n \in \mathbb{Z}$, $n \in bH$ if $n \pmod{q} \in bH$.

We will frequently use that the orthogonality of characters implies that, for any $a \in \mathbb{Z}_q^\times$ and $n \in \mathbb{N}$ or $n \in \mathbb{Z}_q^\times$,
\[
\mathbf{1}_{n = a \pmod{q}} = \frac{1}{\varphi(q)} \sum_{\chi \pmod{q}} \overline{\chi}(n) \chi(a)
\]
and, for any $H \leq \mathbb{Z}_q^\times$ and $b \in \mathbb{Z}_q^\times$,
\begin{equation}
\label{eq:incoset}
\mathbf{1}_{n \in bH} = \frac{1}{\varphi(q)} \sum_{\substack{\chi \pmod{q} \\ \chi(h) = 1 \text{ for all $h \in H$}}} \overline{\chi}(n) \chi(b).
\end{equation}
Note that the number of characters in the last sum equals the index of $H$ in $\mathbb{Z}_q^\times$.

\section{Auxiliary results}

\subsection{Sieves, Burgess' bound, and products of primes in cosets}

We shall use the linear sieve, see e.g.~\cite[Section 12]{friedlander}.
\begin{lemma}
\label{le:linear}
Let $z\geq 2$ and let $D = z^s$ with $s\geq 1$. There exist coefficients $\lambda_d^\pm$ such that the following hold.
\begin{enumerate}[(i)]
\item $|\lambda_d^\pm| \leq 1$ for every $d \in \mathbb{N}$ and $\lambda_d^\pm$ are supported on $\{d \leq D:\,\, d \mid P(z)\}$.
\item For every $n \in \mathbb{N}$,
\[
\sum_{d \mid n} \lambda^-_d \leq \mathbf{1}_{(n, P(z)) = 1} \leq \sum_{d \mid n} \lambda^+_d. 
\]
\item If $h \colon \mathbb{N} \to [0, 1)$ is a multiplicative function such that 
\[
\prod_{w_1 \leq p < z_1} (1-h(p))^{-1} \leq \frac{\log z_1}{\log w_1} \left(1+O\left(\frac{1}{\log w_1}\right)\right)
\]
for any $z_1 \geq w_1 \geq 2$, then
\begin{align*}
\sum_{d \mid P(z)} \lambda_d^+ h(d) &\leq (F_0(s)+o(1)) \prod_{p < z} (1-h(p)), \\
\sum_{d \mid P(z)} \lambda_d^- h(d) &\geq (f_0(s)+o(1)) \prod_{p < z} (1-h(p)),
\end{align*}
where $F_0$ and $f_0$ are the upper and lower bound linear sieve functions (see e.g.~\cite[(12.1)--(12.2)]{friedlander}), so that in particular
\begin{align}
\begin{aligned}
\label{eq:F0f0def}
F_0(s)&= 2e^\gamma/s \textnormal{  for  } s \in [1, 3],\\ 
f_0(s)&=2e^{\gamma}\log(s-1)/s \textnormal{  for  } s\in [2,4].   
\end{aligned}
\end{align}
\end{enumerate}
\end{lemma}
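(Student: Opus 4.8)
The plan is to invoke the classical construction of the linear (Rosser--Iwaniec) sieve; nothing genuinely new is needed, so I will indicate which weights to take and how their known properties yield the three claims. Fix the level $D=z^s$ and let $\lambda_d^\pm$ be Rosser's sign-truncated weights of level $D$: ordering the primes $p_1>p_2>\cdots$ below $z$, one sets $\lambda_d^\pm=\mu(d)$ when $d=p_{i_1}\cdots p_{i_r}$ with $p_{i_1}>\cdots>p_{i_r}$ satisfies the appropriate one-sided truncation conditions (the even- resp. odd-indexed partial products, suitably inflated, staying below $D$), and $\lambda_d^\pm=0$ otherwise. This is exactly the construction described in \cite[Section~12]{friedlander}.

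Part~(i) is then immediate from the definition: every nonzero $\lambda_d^\pm$ equals $\mu(d)\in\{-1,+1\}$, so $|\lambda_d^\pm|\leq 1$; and the truncation conditions force $d\mid P(z)$ and $d\leq D$ (the first truncation inequality already imposes $d\leq D$). Part~(ii) is the defining combinatorial property of these weights: one proves $\sum_{d\mid n}\lambda_d^-\leq \mathbf 1_{(n,P(z))=1}\leq \sum_{d\mid n}\lambda_d^+$ by induction on the largest prime factor of $n$ using the Buchstab identity, the sign pattern of the truncation being engineered precisely so that the iterated identity terminates with a one-sided error of the correct sign. I would simply cite \cite[Section~12]{friedlander} for both; reproducing the induction is routine.

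Part~(iii) is the quantitative heart, but it is again the standard evaluation of the linear sieve under a dimension-one hypothesis. Writing $V(z)=\prod_{p<z}(1-h(p))$, the stated condition on $h$ is exactly the linear sieve dimension condition (with the relative error $O(1/\log w_1)$ on each block $[w_1,z_1)$). Substituting the Rosser weights and carrying out the Buchstab iteration, one obtains
\[
\sum_{d\mid P(z)}\lambda_d^+h(d)=(F(s)+o(1))\,V(z),\qquad \sum_{d\mid P(z)}\lambda_d^-h(d)=(f(s)+o(1))\,V(z)
\]
as $z\to\infty$, where $F,f$ are the continuous solutions of the coupled system $(uF(u))'=f(u-1)$, $(uf(u))'=F(u-1)$ for $u>2$, normalised by $F(u)=2e^\gamma/u$ for $0<u\leq 3$ and $f(u)=0$ for $0<u\leq 2$; see \cite[(12.1)--(12.2)]{friedlander}. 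This gives $F_0(s)=2e^\gamma/s$ on $[1,3]$ directly, and integrating $(uf(u))'=F(u-1)=2e^\gamma/(u-1)$ on $[2,4]$ with $f(2)=0$ yields $f_0(s)=2e^\gamma\log(s-1)/s$ on $[2,4]$, which are the formulas in~\eqref{eq:F0f0def}.

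The only mild obstacle is bookkeeping in part~(iii): one must check that the relatively weak hypothesis here --- an $O(1/\log w_1)$ relative error on each dyadic block rather than a single absolute remainder term --- still forces the cumulative error contributions in the Buchstab iteration into $o(1)\cdot V(z)$ as $z\to\infty$. This is handled exactly as in \cite[Section~12]{friedlander} by summing the resulting geometric-type series of error terms against $V(z)$, and requires no new idea.
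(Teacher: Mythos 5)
Your proposal is correct and matches the paper's treatment: the paper does not prove this lemma but simply cites the standard linear (Rosser--Iwaniec) sieve from \cite[Section 12]{friedlander}, which is exactly the construction and evaluation you outline, including the correct derivation of $F_0(s)=2e^\gamma/s$ on $[1,3]$ and $f_0(s)=2e^\gamma\log(s-1)/s$ on $[2,4]$ from the delay-differential system. Nothing further is needed.
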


Applying the sieve in our set-up often leads us to sums over integers coprime to the modulus $q \in \mathbb{N}$. For these we note that by M\"obius inversion we have, for any $y \geq 1$,
\begin{equation}
\label{eq:Simplegcd}
\sum_{\substack{m \leq y \\ (m, q) = 1}} 1 = \sum_{e \mid q} \mu(e) \sum_{m \leq y/e} 1 = y\sum_{e \mid q} \frac{\mu(e)}{e} + O(\tau(q)) = y\frac{\varphi(q)}{q} + O(\tau(q)).
\end{equation}

For character sums we use the Burgess bound.
\begin{lemma}
\label{le:Burgess}
Let $q \in \mathbb{N}$, let $\chi$ be a primitive character $\pmod{q}$ and let $M, N \geq 1$. Then, for any $\varepsilon > 0$
\[
\left|\sum_{M < n \leq M+N} \chi(n)\right| \ll_{\varepsilon} N^{1-\frac{1}{r}} q^{\frac{r+1}{4r^2} + \varepsilon}
\]
for $r \in \{1, 2, 3\}$. If $q$ is cube-free or $\chi$ has bounded order, then this holds for any $r \in \mathbb{N}$.

In particular, for any $\varepsilon > 0$, there exists $\delta = \delta(\varepsilon) > 0$ such that for any $q \in \mathbb{N}$ and any $N \geq q^{1/3+\varepsilon}$ we have
\[
\left|\sum_{n \leq N} \chi(n)\right| \ll_{\varepsilon} N^{1-\delta}.
\]
When $q$ is cube-free or $\chi$ has bounded order, this holds for $N \geq q^{1/4+\varepsilon}$.
\end{lemma}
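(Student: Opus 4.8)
This lemma is the classical Burgess bound together with two of its standard corollaries, so the plan is simply to quote the Burgess estimate from the literature and then optimise over $r$. For the first displayed inequality, the case $r\in\{1,2,3\}$ with no restriction on $q$ is the usual Burgess bound for composite moduli (e.g.\ \cite[Theorem~12.6]{iw-kow}); the extension to all $r\in\mathbb{N}$ for cube-free $q$ goes back to Burgess and is also contained in \cite[Section~12]{iw-kow}; and the analogous extension for characters of bounded order is likewise available in the literature. I would cite these and pass directly to the two consequences.

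For the first ``in particular'' statement I would take $r=3$ in the bound above, which gives, for every $\varepsilon'>0$,
\[
\Bigl|\sum_{n\le N}\chi(n)\Bigr|\ll_{\varepsilon'} N^{2/3}q^{1/9+\varepsilon'}.
\]
If $N\ge q^{1/3+\varepsilon}$ then $q\le N^{3/(1+3\varepsilon)}$, so $q^{1/9+\varepsilon'}\le N^{(1/9+\varepsilon')\cdot 3/(1+3\varepsilon)}$. Since $\tfrac19\cdot\tfrac{3}{1+3\varepsilon}=\tfrac{1}{3(1+3\varepsilon)}<\tfrac13$, choosing $\varepsilon'$ small in terms of $\varepsilon$ (say $\varepsilon'=\varepsilon/6$) makes the exponent of $N$ equal to $\tfrac23+\tfrac{1}{3(1+3\varepsilon)}+\tfrac{3\varepsilon'}{1+3\varepsilon}=1-\tfrac{\varepsilon/2}{1+3\varepsilon}$, which is $1-\delta$ for some $\delta=\delta(\varepsilon)>0$, exactly the claim.

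For the cube-free (or bounded-order) version I would instead use the bound with a large parameter $r$. Writing $\tfrac{r+1}{4r^2}=\tfrac{1}{4r}+\tfrac{1}{4r^2}$ and using $q\le N^{4/(1+4\varepsilon)}$ when $N\ge q^{1/4+\varepsilon}$, the exponent of $N$ coming out of the bound is
\[
1-\frac1r+\Bigl(\frac{1}{4r}+\frac{1}{4r^2}+\varepsilon'\Bigr)\cdot\frac{4}{1+4\varepsilon}
=1+\frac{1}{1+4\varepsilon}\Bigl(-\frac{4\varepsilon}{r}+\frac{1}{r^2}+4\varepsilon'\Bigr).
\]
Choosing $\varepsilon'=\varepsilon/(2r)$ and then $r=\lceil 1/\varepsilon\rceil$ makes the bracket equal to $-\tfrac{2\varepsilon}{r}+\tfrac{1}{r^2}<0$, so the exponent is $1-\delta$ with $\delta=\delta(\varepsilon)>0$ fixed. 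The only point that needs (minor) care is precisely this last optimisation: since $\delta\to 0$ as $r\to\infty$ one must \emph{not} take $r$ too large, and the Burgess $\varepsilon'$ must be chosen small relative to the target $\varepsilon$; with these provisos both ``in particular'' claims follow. There is no genuine obstacle here beyond this bookkeeping, as the whole lemma rests on already-established character-sum estimates.
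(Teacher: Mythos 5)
Your proposal is correct and follows essentially the same route as the paper: the first part is quoted from the literature (the paper notes that $r=1$ is Pólya--Vinogradov and $r\ge 2$ is Burgess, citing~\cite[Theorems 12.4--12.5]{iw-kow} and~\cite[Lemma 2.4]{Heath-Brown2} for the bounded-order case), and the two consequences are deduced exactly as you do, by taking $r=3$ for general $q$ and $r$ large in the cube-free/bounded-order case. Your explicit exponent computations, which the paper omits, check out.
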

\begin{proof}
The case $r = 1$ of the first part follows from the P{\'o}lya--Vinogradov inequality (see e.g.~\cite[Theorem 12.4]{iw-kow}). The case $r \geq 2$ of the first part is the Burgess bound, see e.g.~\cite[Theorem 12.5]{iw-kow} for the general and cube-free case and~\cite[Lemma 2.4]{Heath-Brown2} for the bounded order case.

The second part follows from the first by taking $r=3$ in the case of arbitrary $q$ and $r$ large in case $q$ is cube-free or $\chi$ has bounded order.
\end{proof}

Let us prove the following quick consequence of the linear sieve and the Burgess bound.
\begin{lemma}[The number of products of at most two primes in cosets]
\label{le:P2}
Let $\varepsilon>0$ be fixed. Let $q \in \mathbb{N}$, $N \geq q^{3/4+\varepsilon}$, and let $\theta_0 = 1-\varepsilon-\frac{\log q}{4 \log N}$. Let $H \leq \mathbb{Z}_q^\times$ be a subgroup of fixed index $Y$ and let $b \in \mathbb{Z}_q^\times$. Then
\[
(1+o(1))\frac{2 \log(3\theta_0 -1)}{\theta_0} \cdot \frac{N}{Y \log N} \leq \sum_{\substack{n \leq N \\ n \in bH \\ p \mid n \implies p \geq N^{1/3}}} 1 \leq (1+o(1))\frac{2}{\theta_0} \cdot \frac{N}{Y \log N}
\]
\end{lemma}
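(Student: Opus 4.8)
The plan is to express the count $\sum_{n \leq N,\, n \in bH,\, p \mid n \implies p \geq N^{1/3}} 1$ via the linear sieve and extract the main term using orthogonality of characters (in the form~\eqref{eq:incoset}) together with the Burgess bound (Lemma~\ref{le:Burgess}) to kill the non-principal characters. Concretely, the condition ``$p \mid n \implies p \geq N^{1/3}$'' for $n \leq N$ means that $n$ has no prime factor below $z := N^{1/3}$, i.e. $(n, P(z)) = 1$. So the count is $\sum_{n \leq N,\, n \in bH} \mathbf{1}_{(n,P(z))=1}$. Note that any such $n$ with $1 < n \leq N$ is automatically coprime to $q$ once we also restrict to $n$ in a coset of $H \leq \mathbb{Z}_q^\times$ (the $n$ divisible by a prime factor of $q$ below $z$ are killed, and primes above $z$ dividing $q$ contribute negligibly, or one simply notes $bH \subseteq \mathbb{Z}_q^\times$ forces $(n,q)=1$).

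First I would sandwich using Lemma~\ref{le:linear} with $D = z^s$, where $s$ is chosen so that $D = N^{s/3}$ just below the Burgess-admissible range: we want $D$ slightly less than $N/q^{1/4+\varepsilon}$, say $D = N^{1-\varepsilon}q^{-1/4}$, which corresponds to $s = 3\theta_0$ with $\theta_0 = 1 - \varepsilon - \tfrac{\log q}{4\log N}$ as in the statement; the hypothesis $N \geq q^{3/4+\varepsilon}$ guarantees $s \in [2,4]$ (roughly $s$ between just above $2$ and just below $3$), so the explicit formulas~\eqref{eq:F0f0def} for $F_0, f_0$ apply. Then
\[
\sum_{d \mid P(z)} \lambda_d^- \sum_{\substack{n \leq N \\ n \in bH \\ d \mid n}} 1 \;\leq\; \sum_{\substack{n \leq N \\ n \in bH \\ (n,P(z))=1}} 1 \;\leq\; \sum_{d \mid P(z)} \lambda_d^+ \sum_{\substack{n \leq N \\ n \in bH \\ d \mid n}} 1.
\]
For each $d \mid P(z)$ with $d \leq D$ and $(d,q)=1$, I expand $\mathbf{1}_{n \in bH}$ by~\eqref{eq:incoset} over the $Y$ characters $\chi$ trivial on $H$: the principal character contributes $\tfrac{1}{Y}\tfrac{\varphi(q)}{q}\cdot \tfrac{N}{d} + O(\tau(q))$ by~\eqref{eq:Simplegcd} (applied to $m = n/d$, with modulus $q$), and each non-principal $\chi$ (necessarily induced by a primitive character of conductor dividing $q$, and of bounded order since $[\mathbb{Z}_q^\times:H] = Y$ is fixed) contributes $O(d^{-1} \cdot (N/d)^{1-\delta})$-type savings via Burgess applied to $\sum_{m \leq N/d}\chi(dm)$ — here using that $N/d \geq N/D = N^{\varepsilon}q^{1/4} \geq q^{1/4+\varepsilon'}$, which is exactly why we need $\chi$ of bounded order in Lemma~\ref{le:Burgess}. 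Summing the error over $d \leq D$ gives $o(N/\log N)$ total.

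Thus both the upper and lower sums equal $\tfrac{1}{Y}\tfrac{\varphi(q)}{q}\, N \sum_{d \mid P(z)} \tfrac{\lambda_d^{\pm}}{d} + o(N/\log N)$. I then apply Lemma~\ref{le:linear}(iii) with the multiplicative function $h(p) = \tfrac{1}{p}\cdot\tfrac{q}{\varphi(q)}$ for $p \nmid q$ and $h(p)=0$ for $p \mid q$ — wait, more cleanly: since the $d$-sum already has the coprimality to $q$ built in via the character expansion having factored out $\tfrac{\varphi(q)}{q}$, I instead take $h(p) = 1/p$ for all $p$, for which Mertens' theorem gives the required hypothesis $\prod_{w_1 \leq p < z_1}(1-1/p)^{-1} \leq \tfrac{\log z_1}{\log w_1}(1 + O(1/\log w_1))$. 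Then $\sum_{d \mid P(z)} \lambda_d^+/d \leq (F_0(s)+o(1))\prod_{p<z}(1-1/p) = (F_0(s)+o(1)) \tfrac{e^{-\gamma}}{\log z}$ by Mertens, and similarly with $f_0(s)$ for the lower bound; multiplying by $\tfrac{1}{Y}\tfrac{\varphi(q)}{q} N$... but the statement has no $\varphi(q)/q$ factor. The resolution is that the coprimality-to-$q$ savings is absorbed: writing $h(p) = 1/p$ for $p \nmid q$ and including the local factors at $p \mid q$, the product $\tfrac{\varphi(q)}{q}\prod_{p < z}(1-1/p)^{-1}\cdot\prod_{p<z}(1-h(p)) $ telescopes appropriately; in any case, with $s = 3\theta_0$ and~\eqref{eq:F0f0def}, $F_0(3\theta_0) \cdot \tfrac{1}{\log z} = \tfrac{2e^\gamma}{3\theta_0}\cdot\tfrac{3}{\log N} = \tfrac{2 e^\gamma}{\theta_0 \log N}$, and $F_0(3\theta_0)\cdot e^{-\gamma} = \tfrac{2}{\theta_0}$, giving the claimed upper bound $(1+o(1))\tfrac{2}{\theta_0}\cdot\tfrac{N}{Y\log N}$; likewise $f_0(3\theta_0)e^{-\gamma} = \tfrac{2\log(3\theta_0-1)}{3\theta_0}\cdot 3/... $ — more precisely $f_0(3\theta_0) = \tfrac{2e^\gamma \log(3\theta_0 - 1)}{3\theta_0}$, and $f_0(3\theta_0)\cdot e^{-\gamma}\cdot 1 = \tfrac{2\log(3\theta_0-1)}{3\theta_0}$, so after the $\tfrac{3}{\log N}$ factor from $1/\log z = 3/\log N$ we get $\tfrac{2\log(3\theta_0-1)}{\theta_0}\cdot\tfrac{N}{Y\log N}$ as claimed.

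The main obstacle I anticipate is bookkeeping the interaction between the coprimality-to-$q$ constraint and the sieve dimension: one must be careful that factoring $\tfrac{\varphi(q)}{q}$ out of the principal-character term and then applying the one-dimensional sieve with $h(p)=1/p$ (rather than a $q$-dependent $h$) produces exactly the advertised constants and no stray $\tfrac{\varphi(q)}{q}$. Since $q$ may have many small prime factors, the cleanest route is: extend the sum to all $n \leq N$ (not just $(n,q)=1$) but keep the coset condition, expand via~\eqref{eq:incoset}, and recognize that in the principal term $\sum_{m \leq N/d, (dm,q)=1} 1 = \tfrac{\varphi(q)}{q}\tfrac{N}{d} + O(\tau(q))$ already when $(d,q)=1$ — then the remaining $d$-sum is the clean linear sieve sum with $h(p)=1/p$, and Mertens' product $\prod_{p<z}(1-1/p) \sim e^{-\gamma}/\log z$ combines with nothing else. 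The secondary technical point is verifying $s = 3\theta_0 \in [2,4]$: from $N \geq q^{3/4+\varepsilon}$ we get $\tfrac{\log q}{4\log N} \leq \tfrac{1}{3} - \tfrac{4\varepsilon}{9} + O(\varepsilon^2)$, so $\theta_0 \geq \tfrac{2}{3} + c\varepsilon$ hence $s = 3\theta_0 \geq 2 + 3c\varepsilon > 2$, while trivially $\theta_0 < 1$ so $s < 3 < 4$; this puts us in the valid range for both $F_0$ on $[1,3]$ and $f_0$ on $[2,4]$, and makes $3\theta_0 - 1 > 1$ so $\log(3\theta_0-1) > 0$ and the lower bound is nontrivial.
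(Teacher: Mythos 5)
Your proposal follows essentially the same route as the paper's proof: sandwich with the linear sieve at level $D=N^{\theta_0}$ (i.e.\ $s=3\theta_0$, $z=N^{1/3}$), expand the coset condition via~\eqref{eq:incoset} into the $Y$ bounded-order characters trivial on $H$, kill the non-principal ones with Burgess (using $N/d\geq q^{1/4+\varepsilon'}$), evaluate the principal term with~\eqref{eq:Simplegcd}, and finish with Lemma~\ref{le:linear}(iii) and Mertens, exactly as in the paper. The one slip is your range check: from $N\geq q^{3/4+\varepsilon}$ one gets $\theta_0\geq \tfrac{2}{3}-\tfrac{5\varepsilon}{9}+O(\varepsilon^2)$, not $\tfrac{2}{3}+c\varepsilon$ (the $-\varepsilon$ in the definition of $\theta_0$ outweighs the gain from $N>q^{3/4}$), so $s=3\theta_0$ can dip slightly below $2$; this is harmless because in that case $\log(3\theta_0-1)<0$ and the lower bound is vacuous, which is how the paper disposes of it ("either $3\theta_0\geq 2$ or the claim is trivial"), while the upper bound only needs $s\in[1,3]$.
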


\begin{proof}
Let us first prove the lower bound. Let $\lambda_d^-$ be as in Lemma~\ref{le:linear} with $D = N^{\theta_0}$ and $s=3\theta_0$ (so that $z = N^{1/3}$). By Lemma~\ref{le:linear}(ii), 
\begin{align}
\label{eq:P2low}
\sum_{\substack{n \leq N \\ n \in bH \\ p \mid n \implies p \geq N^{1/3}}} 1 \geq \sum_{d \leq D} \lambda_d^- \sum_{\substack{m \leq N/d \\ dm \in bH}} 1.
\end{align}
Now we detect the condition $dm \in b\cdot H$ using~\eqref{eq:incoset}. Let $\chi_0, \chi_1, \dotsc, \chi_{Y-1}$ be the characters $\pmod{q}$ for which $\chi_j(h) = 1$ for every $h \in H$. Then
\begin{align*}
\sum_{\substack{m \leq N/d \\ dm \in bH}} 1 &= \sum_{m \leq N/d} \frac{1}{Y} \sum_{j = 0}^{Y-1} \overline{\chi_j}(dm) \chi_j(b) =  \frac{\mathbf{1}_{(d, q) = 1}}{Y} \sum_{\substack{m \leq N/d \\(m, q) = 1}} 1 + \frac{1}{Y} \sum_{j = 1}^{Y-1} \overline{\chi_j}(d) \chi_j(b)  \sum_{\substack{m \leq N/d}} \overline{\chi_j}(m).
\end{align*}

By~\eqref{eq:Simplegcd} and the Burgess bound (Lemma~\ref{le:Burgess}, where we note that each $\chi_j$ has bounded order $\leq Y$) we obtain, for some $\delta=\delta(\varepsilon) > 0$,
\begin{equation}
\label{eq:lodBurg}
\sum_{\substack{m \leq N/d \\ dm \in bH}} 1 = \mathbf{1}_{(d, q) = 1} \frac{1}{Y} \frac{N}{d} \frac{\varphi(q)}{q} + O(\tau(q)) + O((N/d)^{1-\delta}).
\end{equation}
Plugging this into~\eqref{eq:P2low}, we obtain
\[
\sum_{\substack{n \leq N \\ n \in bH \\ p \mid n \implies p > N^{1/3}}} 1 \geq \frac{N}{Y} \cdot \frac{\varphi(q)}{q} \sum_{(d, q) = 1} \frac{\lambda_d^-}{d}+O(D\tau(q)+N^{1-\delta}D^{\delta}).
\]
By Lemma~\ref{le:linear}(iii) and Mertens' theorem, we have
\begin{align*}
\sum_{(d, q) = 1} \frac{\lambda_d^-}{d}\geq  (f_0(s)+o(1))\prod_{p < N^{1/3}}\left(1-\frac{\mathbf{1}_{p\nmid q}}{p}\right)=(e^{-\gamma}f_0(s)+o(1))\frac{q}{\varphi(q)\log N^{1/3}}    
\end{align*}
The lower bound part of the claim now follows from~\eqref{eq:F0f0def}, since $s=3\theta_0\leq 4$ and either $3\theta_0\geq 2$ or the claim is trivial. The upper bound follows completely similarly but using the upper bound linear sieve instead.
\end{proof}

\subsection{Lower bounds on product sets}

Next we collect some results concerning convolutions of characteristic functions and product sets (or equivalently sumsets in additive notation). The following simple lemma gives a lower bound for convolutions on a product set.

\begin{lemma}\label{le:convolution} 
Let $G$ be a finite abelian group.
\begin{enumerate}[(i)]
\item Let $A,B\subseteq G$ be nonempty subsets of $G$. Then we have
\begin{align*}
\mathbf{1}_{A}*\mathbf{1}_{B}(c)\geq |A|+|B|-|G|  \end{align*}
for all $c\in G$.
\item Let $H \leq G$, let $a, b \in G$, and let $A \subseteq aH$ and $B \subseteq bH$. Then, for any $c \in abH$, we have
\begin{align*}
\mathbf{1}_{A}*\mathbf{1}_{B}(c)\geq |A|+|B|-|H|.
\end{align*}
\end{enumerate}
\end{lemma}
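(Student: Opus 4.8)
The plan is to prove both parts by a direct double-counting argument using orthogonality of characters (or, equivalently, Fourier inversion on the finite abelian group $G$), which will make the two statements nearly identical in structure.

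For part (i), first I would recall that for any $c \in G$ one has
\[
\mathbf{1}_A * \mathbf{1}_B(c) = \#\{(a,b) \in A \times B \colon ab = c\} = \sum_{a \in A} \mathbf{1}_{a^{-1}c \in B}.
\]
Now $a^{-1}c$ ranges, as $a$ runs over $A$, over a subset $S = c \cdot A^{-1}$ of $G$ of size exactly $|A|$, where $A^{-1} = \{a^{-1} \colon a \in A\}$. The count above is then $|S \cap B|$. Since $S$ and $B$ both lie in $G$, inclusion--exclusion gives $|S \cap B| = |S| + |B| - |S \cup B| \geq |S| + |B| - |G| = |A| + |B| - |G|$, which is the desired bound. (If this quantity is negative the inequality is vacuous, so there is nothing to check there.) The only thing to be slightly careful about is that $a \mapsto a^{-1}c$ is a bijection of $G$, hence injective on $A$, so that $|S| = |A|$ exactly.

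For part (ii), I would run the same argument but inside the subgroup $H$ rather than all of $G$. Fix $c \in abH$. As before, $\mathbf{1}_A * \mathbf{1}_B(c) = |S \cap B|$ where $S = c \cdot A^{-1}$ has size $|A|$. The point is now that both $S$ and $B$ are contained in a single coset of $H$: indeed $B \subseteq bH$ by hypothesis, and for $a \in A \subseteq aH$ we have $a^{-1} \in a^{-1}H$, so $c a^{-1} \in c a^{-1} H = (abH)(a^{-1}H) = bH$ (using $c \in abH$ and that cosets multiply coset-wise since $G$ is abelian), i.e. $S \subseteq bH$. Applying inclusion--exclusion within the set $bH$, which has $|bH| = |H|$ elements, yields $|S \cap B| \geq |S| + |B| - |bH| = |A| + |B| - |H|$, as claimed.

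There is no real obstacle here; the statement is elementary and the entire content is the observation that the convolution counts intersections of translated (dilated) sets, together with the trivial inclusion--exclusion inequality $|X \cap Y| \geq |X| + |Y| - |Z|$ whenever $X, Y \subseteq Z$. The one mild subtlety worth flagging explicitly is the coset bookkeeping in part (ii): one must use commutativity of $G$ to conclude that $c \cdot A^{-1} \subseteq bH$ from $c \in abH$ and $A \subseteq aH$, and that $|bH| = |H|$. Part (i) is then just the special case $H = G$, so in the write-up I would prove (ii) first, or prove (i) and note (ii) follows by replacing $G$ with the coset structure, whichever is cleaner.
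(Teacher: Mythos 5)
Your proposal is correct and follows essentially the same route as the paper: both identify $\mathbf{1}_A*\mathbf{1}_B(c)=|cA^{-1}\cap B|$ and apply the trivial inclusion--exclusion bound, with part (ii) obtained by restricting to the relevant coset of $H$ (the paper phrases this as applying part (i) to the group $H$ and the sets $a^{-1}A,\,b^{-1}B\subseteq H$, which is the same coset bookkeeping you carry out directly).
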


\begin{proof}
Part (i) follows since
\begin{align*}
\mathbf{1}_{A}*\mathbf{1}_{B}(c)&=|c A^{-1} \cap B|=|G|-|(cA^{-1})^{c}\cup B^{c}|\\
&\geq |G|-(|G|-|A|)-(|G|-|B|)=|A|+|B|-|G|.
\end{align*}
Part (ii) follows from applying part (i) to the group $H$ and sets $a^{-1} A, b^{-1} B \subseteq H$.
\end{proof}

Kneser's theorem (see e.g.~\cite[Theorem 5.5]{tao-vu}) is a standard tool for studying product sets inside abelian groups.

\begin{lemma}[Kneser's theorem]
\label{le:Kneser}
Let $G$ be a finite abelian group and let $A, B \subseteq G$. Let $H$ be the stabilizer of $A \cdot B$. Then
\[
|A \cdot B| \geq |A \cdot H| + |B \cdot H| - |H|\geq |A|+|B|-|H|.
\]
\end{lemma}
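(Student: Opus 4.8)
The plan is to prove the first inequality $|A\cdot B|\geq |A\cdot H|+|B\cdot H|-|H|$ (the second one being immediate from $A\subseteq A\cdot H$ and $B\subseteq B\cdot H$) by a double induction: the outer induction is on $|G|$, and for groups of a given order it reduces, as explained below, to an inner induction on $|B|$ powered by the Dyson $e$-transform. I may assume $A,B\neq\emptyset$ and, after translating $A$ and $B$ (which replaces $A\cdot B$ by a translate and hence changes neither $H$ nor any of the cardinalities in the statement), that $1\in A\cap B$, so that $A\cup B\subseteq A\cdot B$. If $A\cdot B=G$ then $H=G$ and both sides of the claim equal $|G|$, so I may assume $A\cdot B\subsetneq G$.

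I would first dispose of the case that $H:=\mathrm{Stab}(A\cdot B)$ is nontrivial. Since $A\cdot B$ is a union of cosets of $H$, I would pass to the quotient $\overline G=G/H$ with images $\overline A,\overline B$; here $\overline A\cdot\overline B=\overline{A\cdot B}$ has trivial stabilizer in $\overline G$, because any subgroup of $\overline G$ stabilizing it pulls back to a subgroup of $G$ that contains $H$ and stabilizes $A\cdot B$, hence equals $H$. As $|\overline G|<|G|$, the outer induction hypothesis gives $|\overline A\cdot\overline B|\geq|\overline A|+|\overline B|-1$, and multiplying through by $|H|$ while using $|A\cdot B|=|H|\,|\overline A\cdot\overline B|$, $|A\cdot H|=|H|\,|\overline A|$ and $|B\cdot H|=|H|\,|\overline B|$ yields the claim.

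It then remains to treat the case $H=\{1\}$, i.e.\ to prove $|A\cdot B|\geq|A|+|B|-1$, and here I would induct on $|B|$. If $|B|=1$ this is an equality. If $|B|\geq 2$, I would pick $e\in A$ and form the $e$-transform $A_e=A\cup eB$, $B_e=B\cap e^{-1}A$, which satisfies $A_e\cdot B_e\subseteq A\cdot B$, $|A_e|+|B_e|=|A|+|B|$, and (since $1\in B$ and $e\in A$) $1\in B_e\neq\emptyset$. If $eB\subseteq A$ held for every $e\in A$, then $A\cdot B=A$, forcing $B\subseteq\mathrm{Stab}(A)=\mathrm{Stab}(A\cdot B)=\{1\}$ and contradicting $|B|\geq 2$; so I may choose $e\in A$ with $1\leq|B_e|<|B|$. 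Applying to $(A_e,B_e)$ the argument so far — the inner induction hypothesis on $|B|$ when $\mathrm{Stab}(A_e\cdot B_e)=\{1\}$, and the quotient step with the outer induction hypothesis when $\mathrm{Stab}(A_e\cdot B_e)\neq\{1\}$ — I obtain, writing $H_e=\mathrm{Stab}(A_e\cdot B_e)$, that $|A\cdot B|\geq|A_e\cdot B_e|\geq|A_e\cdot H_e|+|B_e\cdot H_e|-|H_e|$.

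The main obstacle is this final step: when $H_e=\{1\}$ one immediately gets $|A\cdot B|\geq|A_e|+|B_e|-1=|A|+|B|-1$, but when $H_e\neq\{1\}$ one must still deduce $|A_e\cdot H_e|+|B_e\cdot H_e|-|H_e|\geq|A|+|B|-1$, which requires controlling how far $A_e$ and $B_e$ are from being $H_e$-periodic and, if necessary, choosing the parameter $e$ more carefully or iterating the transform. This is precisely the delicate point of Kneser's theorem, and for the complete details I would follow the treatment in~\cite[Theorem 5.5]{tao-vu}; alternatively, one can bypass the induction altogether via Petridis's method based on a nonempty subset $X\subseteq A$ minimizing $|X\cdot B|/|X|$.
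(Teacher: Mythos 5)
The paper does not prove this lemma at all: it is quoted as a standard result with a pointer to \cite[Theorem 5.5]{tao-vu}, so there is no internal argument to compare yours against. Judged on its own terms, your writeup is a correct and carefully bookkept \emph{outline} of the classical proof --- the translation normalisation, the reduction to trivial stabilizer by passing to $G/H$ (including the correct observation that the image of $A\cdot B$ has trivial stabilizer in the quotient), the base case $|B|=1$, and the Dyson $e$-transform identities $A_e\cdot B_e\subseteq A\cdot B$, $|A_e|+|B_e|=|A|+|B|$, $1\in B_e\subsetneq B$ are all right. But it is not a proof, for the reason you yourself identify.

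The gap is the case $H_e:=\mathrm{Stab}(A_e\cdot B_e)\neq\{1\}$. There the quotient step only yields $|A\cdot B|\geq |A_e\cdot H_e|+|B_e\cdot H_e|-|H_e|\geq |A_e|+|B_e|-|H_e|=|A|+|B|-|H_e|$, which falls short of the target $|A|+|B|-1$ by $|H_e|-1$. Closing this deficit requires the genuinely hard part of Kneser's argument: one must quantify the ``holes'' $|A_e\cdot H_e\setminus A_e|$ and $|B_e\cdot H_e\setminus B_e|$ and show their total is at most $|H_e|-1$ (or else re-run the transform with a different choice of $e$, within an induction on a more elaborate quantity than $|B|$ alone). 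Saying that one would ``choose $e$ more carefully or iterate the transform'' and then deferring to the very reference the paper cites means you have reduced Kneser's theorem to its own hard case rather than proved it. (As a minor aside, I would also not rely on the claim that Petridis's minimizing-ratio method ``bypasses'' the induction for full Kneser; that method is established for Pl\"unnecke--Ruzsa and Cauchy--Davenport type statements, and a clean Petridis-style proof of Kneser in general abelian groups is not standard.) Since the lemma is used in the paper purely as a black box, citing it is legitimate; but as a proof attempt, this one is incomplete at precisely the decisive step.
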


\subsection{Mean values of character sums}
The following lemma is the basic mean value theorem (or large sieve) for characters sums.
\begin{lemma}[Mean value theorem]
\label{le:MVT}
Let $q \in \mathbb{N}$ and $N \geq 2$. Then, for any complex coefficients $a_n$,
\[
\sum_{\chi \pmod{q}} \left|\sum_{n \leq N} a_n \chi(n)\right|^2 \leq (N+\varphi(q)) \sum_{\substack{n \leq N \\ (n, q) = 1}} |a_n|^2.
\]
\end{lemma}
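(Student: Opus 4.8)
The plan is to prove the mean value theorem for character sums by reducing it to the classical large sieve inequality. First I would expand the square on the left-hand side and swap the order of summation, writing
\[
\sum_{\chi \pmod q} \left|\sum_{n\leq N} a_n \chi(n)\right|^2 = \sum_{m,n\leq N} a_m \overline{a_n} \sum_{\chi\pmod q} \chi(m)\overline{\chi(n)}.
\]
By the orthogonality relations for Dirichlet characters, the inner sum over $\chi$ equals $\varphi(q)$ when $m\equiv n \pmod q$ and $(mn,q)=1$, and vanishes otherwise. Hence the whole expression becomes $\varphi(q)$ times a sum over pairs $m\equiv n\pmod q$ with $m,n$ coprime to $q$. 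This already shows that only terms with $(n,q)=1$ contribute, which is why the coefficients $a_n$ with $(n,q)>1$ are harmless and the sum on the right is restricted accordingly.

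Next, the standard way to package this is via the large sieve inequality in its additive-character form: for any $\delta$-spaced points $\alpha_1,\dots,\alpha_R\in\mathbb{R}/\mathbb{Z}$ one has $\sum_r |\sum_{n\leq N} b_n e(n\alpha_r)|^2 \leq (N+\delta^{-1})\sum_n |b_n|^2$. One then uses the well-known device (see e.g. the treatment of the large sieve for characters in Iwaniec--Kowalski or Davenport's book) that sums over Dirichlet characters $\pmod q$ can be detected by Gauss sums: writing $\chi(n) = \frac{1}{\tau(\overline\chi)}\sum_{a\pmod q}\overline\chi(a) e(an/q)$ for primitive $\chi$, and handling imprimitive and principal characters by grouping according to conductor, one arrives at a sum over the Farey fractions $a/q$ with $(a,q)=1$, which form a $1/q$-spaced (indeed $1/q^2$-spaced across all moduli, but here just $1/q$-spaced for the single modulus $q$) set of points. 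Alternatively, and more cleanly for a single modulus, one can bound the character sum directly: each term $\sum_{\chi\pmod q}$ is dominated by the sum over all functions $\mathbb{Z}_q^\times\to\mathbb{C}$ of modulus $1$ that are characters, and Parseval on the group $\mathbb{Z}_q^\times$ gives exactly $\varphi(q)\sum_{r\pmod q,(r,q)=1}|\sum_{n\equiv r} a_n|^2$; then one bounds $|\sum_{n\equiv r\pmod q, n\leq N} a_n|^2 \leq (\text{number of such }n)\sum_{n\equiv r}|a_n|^2$ by Cauchy--Schwarz, and since the number of $n\leq N$ in a fixed residue class is at most $N/q + 1 \leq (N+q)/q$...

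Actually the cleanest route, and the one I would carry out in detail, is the first one: after reaching $\varphi(q)\sum_{\substack{m\equiv n\pmod q \\ (mn,q)=1 \\ m,n\leq N}} a_m\overline{a_n}$, write this as $\varphi(q)\sum_{r\in\mathbb{Z}_q^\times} |S_r|^2$ where $S_r = \sum_{\substack{n\leq N,\, n\equiv r\pmod q}} a_n$. Then I would invoke the large sieve inequality applied to the $\varphi(q)$ points $\{r/q : r\in\mathbb{Z}_q^\times\}$, which are $1/q$-spaced on $\mathbb{R}/\mathbb{Z}$: by orthogonality of additive characters one has $\varphi(q)|S_r|^2 \leq \sum_{\chi}|\sum_n a_n\chi(n)|^2$, wait — this is circular. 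The honest statement is that the quantity $\sum_{r\in\mathbb{Z}_q^\times}|S_r|^2$ is, up to the factor $\varphi(q)$ versus $q$, comparable to an additive large sieve sum; the clean bound is the duality/Parseval identity $\sum_{\chi\pmod q}|\sum_n a_n\chi(n)|^2 = \varphi(q)\sum_{r\in\mathbb{Z}_q^\times}|S_r|^2$, and then one estimates $|S_r|^2$ trivially: by Cauchy--Schwarz, $|S_r|^2 \leq \#\{n\leq N: n\equiv r\pmod q\}\cdot \sum_{n\equiv r}|a_n|^2$. Summing over $r$ and using that the count is at most $\lceil N/q\rceil \leq N/q+1$, one gets $\sum_r |S_r|^2 \leq (N/q+1)\sum_{(n,q)=1}|a_n|^2$, hence the left side is $\leq \varphi(q)(N/q+1)\sum |a_n|^2 = \frac{\varphi(q)}{q}(N+q)\sum|a_n|^2 \leq (N+\varphi(q))\sum|a_n|^2$ once we note $\frac{\varphi(q)}{q}(N+q) = \frac{\varphi(q)}{q}N + \varphi(q) \leq N+\varphi(q)$. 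This gives the stated bound with room to spare; the only place requiring care is making sure the trivial count $\#\{n\leq N:n\equiv r\pmod q\}$ and the factor $\varphi(q)/q$ combine to exactly beat $N+\varphi(q)$, which they do. I expect the main obstacle to be purely bookkeeping — correctly handling which terms survive orthogonality — rather than anything substantive, since this is the classical large sieve for Dirichlet characters; indeed one could alternatively just cite it from \cite{iw-kow} or \cite{Montgomery-Vaughan-LS}, but the self-contained argument above is short enough to include.
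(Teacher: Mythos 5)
Your final argument is correct and is essentially the paper's proof: expand by orthogonality of characters, reduce to $\varphi(q)\sum_{r\in\mathbb{Z}_q^\times}|S_r|^2$, bound each $|S_r|^2$ by the trivial count $N/q+1$ of integers $n\leq N$ in a residue class (the paper uses $|a_m a_n|\leq(|a_m|^2+|a_n|^2)/2$ where you use Cauchy--Schwarz, which is the same maneuver), and conclude via $\varphi(q)(N/q+1)\leq N+\varphi(q)$. The detours through the additive large sieve and Gauss sums are unnecessary, as you yourself note, but the argument you ultimately settle on is complete and matches the paper's.
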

\begin{proof}
Squaring out, using the orthogonality of characters and the inequality $|a_m a_n| \leq (|a_m|^2+|a_n|^2)/2$, we obtain 
\begin{align*}
\sum_{\chi \pmod{q}} \left|\sum_{n \leq N} a_n \chi(n)\right|^2 &\leq \varphi(q) \sum_{\substack{m, n \leq N \\ m \equiv n \pmod{q} \\ (mn, q) = 1}} |a_m a_n| \leq \varphi(q) \sum_{\substack{n \leq N \\ (n, q)=1}} |a_n|^2 \sum_{\substack{m \leq N \\ m \equiv n \pmod{q}}} 1 \\
&\leq \varphi(q) \left(\frac{N}{q}+1\right) \sum_{\substack{n \leq N \\ (n, q) = 1}} |a_n|^2.
\end{align*}
\end{proof}
We shall also use the following large value theorem due to Huxley (see e.g.~\cite[Theorem 9.15 with $s_r(\chi) = 0$ for every $r$]{iw-kow}).
\begin{lemma}
\label{le:Huxley}
Let $q \in \mathbb{N}$ and $N \geq 2$, and let $\chi_1, \dotsc, \chi_R$ be distinct Dirichlet characters of modulus $q$. Then, for any complex coefficients $a_n$,
\[
\sum_{j=1}^R \left|\sum_{n \leq N} a_n \chi_j(n)\right|^2 \ll (N+R^{2/3} N^{1/3} q^{1/3}) (\log (qN))^6 \sum_{n \leq N} |a_n|^2.
\]
\end{lemma}

The following lemma gives variants of Hal{\'a}sz--Montgomery type mean value theorems that are tailored for character sums supported on numbers without small prime factors (for the usual Hal{\'a}sz--Montgomery type mean value theorem, see e.g.~\cite[Theorem 9.14]{iw-kow}). Using different $r$ in Burgess' bound one would obtain different results. A similar result can also be found from~\cite[Theorem 3]{Puchta}.

\begin{lemma}
\label{le:Hal-Mon}
Let $q \in \mathbb{N}$ and let $\chi_1, \dotsc, \chi_R$ be distinct Dirichlet characters of modulus $q$. Let $\varepsilon>0$ and $C \geq 1$ be fixed. Then the following hold.
\begin{enumerate}[(i)]
\item Let $N \in [q^\varepsilon, q^C]$. Then, for any complex coefficients $a_n$,
\[
\sum_{j = 1}^R \left|\sum_{\substack{n \leq N \\ (n, P(q^\varepsilon)) = 1}} a_n \chi_j(n)\right|^2 \ll \left(\frac{N}{\log q} + q^{\frac{1}{2}+2\varepsilon} R\right) \sum_{\substack{n \leq N \\ (n, P(q^\varepsilon)) = 1}} |a_n|^2.
\]
\item Let $q^C \geq N_2 \geq N_1 \geq q^{2\varepsilon}$. Then, for any complex coefficients $a_n$,
\[
\sum_{j = 1}^R \left|\sum_{\substack{N_1 < n \leq N_2 \\ (n, P(q^\varepsilon)) = 1}} \frac{a_n}{n} \chi_j(n)\right|^2 \ll \left(1 + \frac{q^{1/9+3\varepsilon}}{N_1^{1/3}} R\right) \sum_{\substack{N_1 < n \leq N_2 \\ (n, P(q^\varepsilon)) = 1}} \frac{|a_n|^2}{n}. 
\]
\end{enumerate}
\end{lemma}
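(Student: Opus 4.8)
The two parts of Lemma~\ref{le:Hal-Mon} are both variations on the classical Hal\'asz--Montgomery duality argument, with the single twist that the coefficients are supported on integers free of prime factors below $q^\varepsilon$, and it is precisely this support restriction that lets us win a factor of $\log q$ (resp.\ improve the exponent of $q$) over the usual bound. The first step, common to both parts, is the standard dualization: the left-hand side is at most
\[
\Delta \cdot \sum_{\substack{n \leq N \\ (n, P(q^\varepsilon)) = 1}} |a_n|^2,
\qquad
\Delta := \max_{\substack{n_0 \leq N \\ (n_0, P(q^\varepsilon)) = 1}} \sum_{\substack{n \leq N \\ (n, P(q^\varepsilon)) = 1}} \Bigl| \sum_{j=1}^R \chi_j(n_0) \overline{\chi_j}(n) \Bigr|,
\]
obtained by writing $\sum_n a_n \chi_j(n)$, applying Cauchy--Schwarz in $j$ against the trivial weight, and expanding; one then bounds $\sum_j \chi_j(n_0)\overline{\chi_j}(n)$ crudely by $R$ and uses Cauchy--Schwarz once more, or more efficiently proceeds directly. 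So the whole problem reduces to estimating $\Delta$, i.e.\ bounding, uniformly in $n_0$,
\[
\sum_{\substack{n \leq N \\ (n, P(q^\varepsilon)) = 1}} \Bigl| \sum_{j=1}^R \chi_j(n_0)\overline{\chi_j}(n) \Bigr|.
\]

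For part~(i), I would split the inner count: the ``diagonal-like'' contribution of those $n \equiv n_0 \pmod q$ contributes $R$ times the number of such $n$ up to $N$ that are $P(q^\varepsilon)$-free, and \emph{this} is where the sieve saves a logarithm — by an upper-bound linear sieve (Lemma~\ref{le:linear} with a generous level of distribution, handling the residue class condition via~\eqref{eq:incoset} exactly as in the proof of Lemma~\ref{le:P2}, or even more simply since here we work modulo $q$ itself) the number of $P(q^\varepsilon)$-free integers up to $N$ in a fixed residue class $\pmod q$ is $\ll \frac{N}{q} \cdot \frac{q}{\varphi(q)} \cdot \frac{1}{\log q} \cdot \frac{\varphi(q)}{q} + 1 \ll \frac{N}{q\log q} + 1$, whereas without the sieve one would only get $\frac{N}{q}+1$. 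Summing over the $\leq R$ characters gives $\ll R(\frac{N}{q \log q}+1)$. For the off-diagonal part one expands $\sum_j \chi_j(n_0)\overline{\chi_j}(n) = \sum_j \chi_j(n_0\overline{n})$ and, after removing the sieve weight by positivity (bounding $\mathbf{1}_{(n,P(q^\varepsilon))=1}$ trivially by $1$), estimates $\sum_{n\le N}|\sum_j \chi_j(n_0 \bar n)|$ by Cauchy--Schwarz and Lemma~\ref{le:MVT} applied to the coefficient sequence $(\mathbf{1}_{n=n_0\bar n})$ — this yields the $q^{1/2+2\varepsilon}R$ term (the Burgess bound, Lemma~\ref{le:Burgess}, enters here to bound the short character sums $\sum_{n\le N}\chi(n)$ that arise, which is why we need $N \geq q^\varepsilon$ and why the exponent $1/2$ rather than $1$ appears). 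Multiplying $\Delta$ by $\sum |a_n|^2$ gives the claimed bound.

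For part~(ii), the structure is identical but with the harmonic weight $1/n$ on a dyadic-type range $N_1 < n \leq N_2$; after dualizing, the diagonal term becomes $\ll R \sum_{N_1 < n \leq N_2,\, n \equiv n_0} \frac1n \ll R (\frac{1}{q}\log\frac{N_2}{N_1} + \frac{1}{N_1}) \ll R$ (here we do \emph{not} need the sieve saving, only that $N_1$ is a positive power of $q$ so the ``$+1$-type'' term is $\frac{1}{N_1}$, which together with the tail is $O(1)$, absorbing into the leading $1$), while the off-diagonal contribution is handled by Cauchy--Schwarz plus partial summation to reduce to Burgess estimates on $\sum_{n \leq M}\chi(n)$ for $N_1 \leq M \leq N_2$; taking $r=3$ in Lemma~\ref{le:Burgess} (valid for arbitrary $q$) produces the exponent $q^{1/9+3\varepsilon}/N_1^{1/3}$. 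The main obstacle — and the only place real care is needed — is the bookkeeping of the off-diagonal term: one must arrange the Cauchy--Schwarz and the application of Lemma~\ref{le:MVT} (or a direct expansion followed by Burgess) so that the character-sum savings are not swamped by the sum over residue classes, and in part~(ii) one must be careful that partial summation against $1/n$ does not lose a $\log$ that would spoil the clean ``$1 + \ldots R$'' shape. Everything else is routine Hal\'asz--Montgomery manipulation plus the two quoted inputs, Lemma~\ref{le:Burgess} and Lemma~\ref{le:linear}.
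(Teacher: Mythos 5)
Your high-level shape (sieve for the $\log q$ saving on the diagonal, Burgess/P\'olya--Vinogradov for the $q$-power term on the off-diagonal) matches the paper's, but the specific reduction you start from does not prove the lemma. You reduce everything to bounding
\[
\Delta = \max_{n_0} \sum_{\substack{n \leq N \\ (n,P(q^\varepsilon))=1}} \Bigl|\sum_{j=1}^R \chi_j(n_0\overline{n})\Bigr|,
\]
i.e.\ you apply $|a_m a_n| \le \tfrac12(|a_m|^2+|a_n|^2)$ immediately, discarding the bilinear structure in the character variable. This is fatal: the function $m \mapsto |\sum_j \chi_j(m)|$ has $\ell^2$-norm exactly $(R\varphi(q))^{1/2}$ on $\mathbb{Z}_q^\times$, so for generic characters it is of size $\approx R^{1/2}$ on most of the group, and then $\Delta \asymp N R^{1/2}/\log q$. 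For, say, $N \asymp q$ and $R = (\log q)^4$ this exceeds the claimed bound $N/\log q + q^{1/2+2\varepsilon}R$ by a power of $\log q$ (and by a power of $q$ for larger $R$). Your subsequent Cauchy--Schwarz step confirms the loss: off-diagonally you get $|{\rm supp}|^{1/2}\,(RN + R^2 q^{1/2+\varepsilon})^{1/2} \gg N R^{1/2}/(\log q)^{1/2}$, which cannot be absorbed into either term of the target. No amount of care in the ``bookkeeping of the off-diagonal term'' fixes this, because the quantity $\Delta$ itself is genuinely too large.

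The paper avoids this by invoking the duality principle \emph{first}: it suffices to bound $\sum_{n \le N,\,(n,P(q^\varepsilon))=1} |\sum_j c_j\chi_j(n)|^2$ (respectively the logarithmically weighted version) by $(\text{main} + \text{off-diag}\cdot R)\sum_j|c_j|^2$. Positivity of the dual expression lets one insert the upper-bound sieve majorant $\sum_{d\mid n}\lambda_d^+$ on the outer $n$-sum; expanding the square then gives a true diagonal $j=k$ (where Lemma~\ref{le:linear}(iii) supplies the $1/\log q$, resp.\ $O(1)$, saving) and an off-diagonal $j\neq k$ bounded by $R\sum_j|c_j|^2$ times a single nonprincipal character sum, estimated by P\'olya--Vinogradov in part (i) and by Burgess with $r=3$ after partial summation in part (ii). Two smaller points: in part (i) the $q^{1/2+2\varepsilon}$ comes from P\'olya--Vinogradov (the $r=1$ case), not from a genuinely short-sum Burgess estimate, and the hypothesis $N \ge q^{\varepsilon}$ is needed for the sieve level, not for the character sum; also, a direct ``classical'' Hal\'asz--Montgomery in the original (non-dualized) orientation would require bounding $\sum_{n\le N,\,(n,P(q^\varepsilon))=1}\chi(n)$, where the sieve majorant cannot be inserted because the summand is not nonnegative --- this is precisely why the dualization is the essential first move.
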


\begin{proof}
We only prove the second claim, the first claim follows similarly (using non-logarithmic averages and taking $r=1$ instead of $r=3$ in Burgess' bound). By the duality principle (see e.g.~\cite[Section 7.1]{iw-kow}), it suffices to show that, for any complex coefficients $c_j$,
\begin{equation}
\label{eq:afterdual}
\sum_{\substack{N_1 < n \leq N_2 \\ (n, P(q^\varepsilon)) = 1}} \frac{1}{n} \left|\sum_{j = 1}^R c_j \chi_j(n)\right|^2 \ll \left(1 + \frac{q^{1/9+2\varepsilon}}{N_1^{1/3}} R\right) \sum_{\substack{j=1}}^R |c_j|^2. 
\end{equation}
Let $\lambda_d^+$ be as in Lemma~\ref{le:linear} with sifting parameter $D = q^\varepsilon$ and $s=1$ (so that $z = D$). Then the left-hand side of~\eqref{eq:afterdual} is by Lemma~\ref{le:linear}(ii)
\begin{align*}
&\leq \sum_{\substack{N_1 < n \leq N_2}} \frac{1}{n} \sum_{d \mid n} \lambda_d^+ \left|\sum_{j = 1}^R c_j \chi_j(n)\right|^2 = \sum_{j, k = 1}^R c_j \overline{c_k}  \sum_{d \leq q^\varepsilon} \frac{\lambda_d^+}{d} \chi_j(d) \overline{\chi_k(d)} \sum_{\substack{N_1/d < n \leq N_2/d}} \frac{\chi_j(n)\overline{\chi_k(n)}}{n}.
\end{align*}
The diagonal terms with $j = k$ contribute by~\eqref{eq:Simplegcd}, partial summation, and Lemma~\ref{le:linear}(iii)
\begin{align*}
\sum_{j=1}^R |c_j|^2 \sum_{\substack{d \leq q^\varepsilon \\ (d, q) = 1}} \frac{\lambda_d^+}{d} \sum_{\substack{N_1/d < n \leq N_2/d \\ (n, q) = 1}} \frac{1}{n} &= \sum_{j=1}^R |c_j|^2 \sum_{\substack{d \leq q^\varepsilon \\ (d, q) = 1}} \frac{\lambda_d^+}{d} \left(\frac{\varphi(q)}{q} \log \frac{N_2}{N_1} + O\left(\frac{\tau(q) d}{N_1}\right)\right) \\
&\ll \sum_{j=1}^R |c_j|^2. 
\end{align*}

On the other hand, the non-diagonal terms with $j \neq k$ contribute
\begin{align*}
\ll \sum_{\substack{j, k = 1 \\ j \neq k}}^R |c_j c_k| \sum_{d \leq q^\varepsilon} \frac{1}{d}\left|\sum_{\substack{N_1/d < n \leq N_2/d}} \frac{\chi_j(n)\overline{\chi_k(n)}}{n}\right|.
\end{align*}
By partial summation and the Burgess bound (Lemma~\ref{le:Burgess}) with $r = 3$, this is
\[
\ll R\sum_{j = 1}^R |c_j|^2 \sum_{d \leq q^\varepsilon} \frac{1}{d} \frac{q^{1/9+\varepsilon}}{(N_1/d)^{1/3}},
\]
and the claim follows.
\end{proof}

Lemma~\ref{le:Hal-MonL1chi} below will provide another Hal{\'a}sz--Montgomery type mean value theorem. 

\begin{remark}
\label{rem:Lrbound}
It would be possible to combine the previous three lemmas to obtain bounds for $r$th moments of character sums over primes with real $r > 2$ in the spirit of what is often done in the transference principle. However we obtain quantitatively stronger results by applying these lemmas directly in the proof of Proposition~\ref{prop:transfer} below.
\end{remark}

\section{A multiplicative transference principle}
\subsection{A dense model theorem}
Recall the notation 
\[
[N]_q = \{n \in \mathbb{N} \colon n\leq N \text{ and } (n, q) = 1\}.
\]
The following proposition is a dense model theorem which gives, for an unbounded function $f: [N]_q \to \mathbb{R}_{\geq 0}$ which is majorized by a pseudorandom measure, a model function $g : \mathbb{Z}_q^\times \to \mathbb{R}_{\geq 0}$ that is bounded and such that character sums of $f$ and $g$ behave similarly. 

Such results originate from the work of Green~\cite{green-annals}. The following proposition should be compared with e.g.~\cite[Proposition 5.1]{green-restriction},~\cite[Theorem 2.1]{prendiville-survey} or~\cite[Theorem 2.3]{matomaki-shao}.
\begin{proposition}[A multiplicative dense model theorem]
\label{prop:f=g+h}
Let $q \in \mathbb{N}$ and $N\geq 2$. Let $r > 1$ be fixed. Let $\eta, \varepsilon \in (0,1), C\geq 1$, and let 
\begin{equation}
\label{eq:deltabound}
\delta \in \left(\left(\frac{10 C r  \log \log q}{\varepsilon \log q}\right)^{1/r}, \frac{1}{10}\right).
\end{equation}
Let $f \colon [N]_q \to \mathbb{R}_{\geq 0}$ satisfy:
\begin{enumerate}[({A}1)]
\item There exists a majorant function $\nu \colon [N]_q \to \mathbb{R}_{\geq 0}$ such that $f(n) \leq \nu(n)$ for every $n \in [N]_q$, 
\[
\left|\mathbb{E}_{n \in [N]_q} \nu(n) - 1\right| \leq \eta, \quad \text{and} \quad \max_{\chi \neq \chi_0} \left|\mathbb{E}_{n \in [N]_q} \nu(n)\overline{\chi}(n) \right| \ll q^{-\varepsilon}.
\]
\item There are at most $C\delta^{-r}$ characters $\chi \pmod{q}$ such that
\[
\left|\mathbb{E}_{n \in [N]_q} f(n)\overline{\chi}(n) \right| \geq \delta.
\]
\end{enumerate}
Then there exists a function $g \colon \mathbb{Z}_q^\times \to \mathbb{R}_{\geq 0}$ with the following properties.
\begin{enumerate}[(i)]
\item For every $a \in \mathbb{Z}_q^{\times}$, we have 
\[
0 \leq g(a) \leq 1+\eta + O(q^{-\varepsilon/2}).
\]
\item We have, for any $\chi \pmod{q}$,
\[
\left|\mathbb{E}_{n \in [N]_q} f(n) \overline{\chi}(n) - \mathbb{E}_{a \in \mathbb{Z}_q^\times} g(a) \overline{\chi}(a)\right| \leq \delta.
\]
\item We have, for any $\chi \pmod{q}$,
\[
\left|\mathbb{E}_{a \in \mathbb{Z}_q^\times} g(a) \overline{\chi}(a)\right| \leq |\mathbb{E}_{n \in [N]_q} f(n) \overline{\chi}(n)|
\]
and
\[
\left|\mathbb{E}_{n \in [N]_q} f(n) \overline{\chi}(n) - \mathbb{E}_{a \in \mathbb{Z}_q^\times} g(a) \overline{\chi}(a)\right| \leq |\mathbb{E}_{n \in [N]_q} f(n) \overline{\chi}(n)|.
\]
\item We have $\mathbb{E}_{a \in \mathbb{Z}_q^{\times}} g(a) = \mathbb{E}_{n \in [N]_q} f(n)$.
\item
Let $H \leq \mathbb{Z}_q^{\times}$ be a subgroup of index $Y < 1/(2\delta)$. Then, for any $b \in \mathbb{Z}_q^{\times}$, we have
\[
\mathbb{E}_{\substack{n \in [N]_q}} f(n) \mathbf{1}_{n \in bH} = \mathbb{E}_{a \in \mathbb{Z}_q^\times} g(a) \mathbf{1}_{a \in bH} + O(\delta).
\]
\end{enumerate}
\end{proposition}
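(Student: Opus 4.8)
The plan is to build $g$ by a Fourier-analytic averaging (conditional expectation) argument in the style of Green's dense model theorem, projecting $f$ onto the span of the ``large spectrum'' characters and then truncating. Concretely, let $\mathcal{B} = \{\chi \pmod q : |\mathbb{E}_{n \in [N]_q} f(n)\overline{\chi}(n)| \geq \delta\}$; by hypothesis (A2), $|\mathcal{B}| \leq C\delta^{-r}$, and note $\chi_0 \in \mathcal{B}$ since $\mathbb{E}_{n\in[N]_q} f(n)$ will be comparable to $1$. Define the ``model before truncation'' $\widetilde{g} \colon \mathbb{Z}_q^\times \to \mathbb{R}$ by $\widetilde{g}(a) = \sum_{\chi \in \mathcal{B}} \big(\mathbb{E}_{n \in [N]_q} f(n)\overline{\chi}(n)\big)\chi(a)$, so that $\widehat{\widetilde{g}}(\chi)$ equals $\widehat{f}(\chi)$ on $\mathcal{B}$ and vanishes off $\mathcal{B}$. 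Then set $g(a) = \max(0, \min(\widetilde{g}(a), 1+\eta+Kq^{-\varepsilon/2}))$ for a suitable constant $K$, or more smoothly $g = $ the $L^2$-projection of the truncation; the slightly delicate point is to choose the truncation so that the mass and the small Fourier coefficients of $g$ are not disturbed by more than $O(q^{-\varepsilon/2})$ and $O(\delta)$ respectively.

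The key steps, in order, are as follows. First I would establish the pointwise bound on $\widetilde g$: using the majorant $\nu$ from (A1), for every $a$ one has $\widetilde{g}(a) \leq \sum_{\chi \in \mathcal{B}} \widehat{\nu}(\chi)\chi(a) + (\text{error from }f \neq \nu)$, but the cleanest route is to note $\widetilde g(a) = \mathbb{E}_{n\in[N]_q} f(n) K_{\mathcal B}(a \overline n)$ where $K_{\mathcal B}(m) = \sum_{\chi\in\mathcal B}\chi(m)$ is the Dirichlet kernel of $\mathcal B$; since $K_{\mathcal B} \geq 0$ only fails mildly, instead bound $\widetilde g(a) \leq \mathbb{E}_{n\in[N]_q}\nu(n)(1 + \sum_{\chi\in\mathcal B, \chi\neq\chi_0}\chi(a\overline n))$ and use the second part of (A1): each $|\mathbb{E}_n \nu(n)\overline\chi(n)| \ll q^{-\varepsilon}$, and there are $\leq C\delta^{-r}$ of them, so the total error is $\ll C\delta^{-r}q^{-\varepsilon} = O(q^{-\varepsilon/2})$ precisely by the lower bound on $\delta$ in~\eqref{eq:deltabound} (this is exactly why that hypothesis is shaped the way it is). Combined with the first part of (A1) this gives $\widetilde g(a) \leq 1+\eta + O(q^{-\varepsilon/2})$, so the upper truncation in $g$ is vacuous up to the error term, yielding (i). For (ii) and (iii): for $\chi \in \mathcal B$ we have $\widehat{\widetilde g}(\chi) = \widehat f(\chi)$ exactly, and the truncation changes this by at most the $L^1$-mass removed, which is $O(q^{-\varepsilon/2}) = O(\delta)$; for $\chi \notin \mathcal B$ we have $\widehat{\widetilde g}(\chi) = 0$ while $|\widehat f(\chi)| < \delta$, so $|\widehat f(\chi) - \widehat g(\chi)| < \delta$, and the inequalities in (iii) follow since the truncation only shrinks the $L^2$-energy (projection onto $[0,1+\cdots]$-valued functions) — here one should phrase $g$ as the genuine $L^2$-nearest point projection of $\widetilde g$ onto the convex set of $[0, 1+\eta+O(q^{-\varepsilon/2})]$-valued functions to make ``$|\widehat g(\chi)| \leq |\widehat f(\chi)|$ off $\mathcal B$'' rigorous via the fact that projection onto a convex set is a contraction and $0 \in$ the set. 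For (iv), apply (ii) or (iii) with $\chi = \chi_0$, or simply note the mean is preserved because $\chi_0 \in \mathcal{B}$ and the truncation is arranged to preserve total mass (subtract a constant after truncation if necessary).

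Finally, part (v) follows from (ii) by summing over the $\leq Y$ characters trivial on $H$: by~\eqref{eq:incoset}, $\mathbb{E}_{n\in[N]_q} f(n)\mathbf 1_{n\in bH} - \mathbb{E}_{a\in\mathbb{Z}_q^\times} g(a)\mathbf 1_{a\in bH} = \frac{1}{Y}\sum_{\chi : \chi|_H = 1} \chi(b)\big(\mathbb{E}_{n} f(n)\overline\chi(n) - \mathbb{E}_a g(a)\overline\chi(a)\big)$, and each of the $Y$ summands is $O(\delta)$ by (ii), so the average is $O(\delta)$ — here the hypothesis $Y < 1/(2\delta)$ is not even needed for this crude bound, but it is presumably there to make a sharper $o(\delta)$-type statement or to align with how (v) is applied later. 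I expect the main obstacle to be the bookkeeping in making the truncation step clean: one must check simultaneously that (a) $g \geq 0$, (b) $g$ is bounded by $1+\eta+O(q^{-\varepsilon/2})$, (c) the Fourier coefficients on $\mathcal B$ move by $O(\delta)$, and (d) the mean is \emph{exactly} preserved for (iv); the tension is that an $L^2$-projection automatically gives (a),(b),(c) but not (d) exactly, whereas adding a correcting constant to restore (d) might violate (a) or (b) — this is resolved by noting the correcting constant is $O(q^{-\varepsilon/2})$, well inside the slack in (b), and only makes $g$ larger (or one redefines the target interval as $[0, 1+\eta+O(q^{-\varepsilon/2})]$ with a large enough implied constant and re-projects). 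The lower bound constraint~\eqref{eq:deltabound} on $\delta$ is used in exactly one place — converting $C\delta^{-r}\cdot q^{-\varepsilon}$ into $q^{-\varepsilon/2}$ — and identifying that is the conceptual crux.
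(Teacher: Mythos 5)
Your overall strategy (project onto the large spectrum, then truncate) is a recognisable variant of the dense model argument, but the execution has a genuine gap at the central step, and it is not the route the paper takes. The problem is the inequality $\widetilde g(a) = \mathbb{E}_{n\in[N]_q} f(n) K_{\mathcal B}(a\overline n) \leq \mathbb{E}_{n\in[N]_q} \nu(n) K_{\mathcal B}(a\overline n)$: replacing $f$ by the majorant $\nu$ pointwise is only legitimate if the kernel $K_{\mathcal B}(m)=\sum_{\chi\in\mathcal B}\chi(m)$ is nonnegative, and the Dirichlet kernel of an arbitrary (conjugation-closed) set of characters is not — it takes values as negative as roughly $-(|\mathcal B|-1)$ on a non-negligible set. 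Your parenthetical ``$K_{\mathcal B}\geq 0$ only fails mildly'' is exactly the unproved (and in general false) assertion on which the whole pointwise bound, and hence (i), rests. The downstream consequences are also real: since $\widetilde g$ need not be essentially bounded, the $L^1$-mass removed by truncation is not controlled, so (ii) is not recovered; and the hard truncation (or $L^2$-projection) cannot deliver the \emph{coefficient-wise} inequalities of (iii) — contractivity of a convex projection gives only a global $\ell^2$ bound, whereas (iii) demands $|\widehat g(\chi)|\leq|\widehat f(\chi)|$ for every single $\chi$, including those where $|\widehat f(\chi)|$ is far smaller than any truncation error. Property (iii) is used essentially later (e.g.\ for the characters in $\mathcal X_1$ in Proposition~\ref{prop:transfer}), so it cannot be sacrificed. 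The exact mean identity (iv) has the tension you yourself identify.

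The paper resolves all of this with one structural choice you are missing: instead of multiplying $\widehat f$ by the sharp cutoff $\mathbf 1_{\mathcal B}$, it convolves $f$ with the normalised Fej\'er-type kernel $\frac{1}{|B|^2}(\mathbf 1_B\ast\mathbf 1_{B^{-1}})$ of the multiplicative Bohr set $B=\{b:|\chi(b)-1|\leq\delta/5\ \text{for all}\ \chi\in T\}$ dual to the large spectrum. This kernel is nonnegative (so $g\geq 0$ and $f\leq\nu$ can be used termwise, giving (i)), and its Fourier multiplier is $|\mathbb{E}_{b\in B}\chi(b)|^2\in[0,1]$, equal to $1+O(\delta)$ on $T$ — which makes (ii), (iii) and (iv) immediate identities rather than things to be rescued after truncation. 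The hypothesis~\eqref{eq:deltabound} is used not to bound $C\delta^{-r}q^{-\varepsilon}$ directly, as you guessed, but to prove $|B|\geq q^{1-\varepsilon/2}$ via a pigeonhole covering argument; combined with the mean value theorem this bounds $\sum_{\chi}|\mathbb{E}_{b\in B}\chi(b)|^2\leq\varphi(q)/|B|$ and yields the $O(q^{-\varepsilon/2})$ error in (i). Your observation that (v) follows from (ii) by averaging over the $Y$ characters trivial on $H$ is correct as far as the $O(\delta)$ bound goes, but the rest of the argument needs to be rebuilt around a nonnegative kernel.
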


\begin{proof}
Let
\[
T \coloneqq \left\{\chi \pmod{q} \colon \left|\mathbb{E}_{n \in [N]_q} f(n)\overline{\chi}(n) \right| \geq \delta\right\}
\]
be the large spectrum of $f$, and define the multiplicative Bohr set
\[
B \coloneqq \{b \in \mathbb{Z}_q^{\times} \colon |\chi(b)-1| \leq \delta/5 \text{ for all $\chi \in T$}\}.
\]

By (A2) we have $|T| \leq C \delta^{-r}$. Let us bound $|B|$ from below similarly to~\cite[Proof of Lemma 4.20]{tao-vu}. Write $T = \{\chi_1, \dotsc, \chi_k\}$, where $\chi_j$ are distinct and $k \leq C \delta^{-r}$. By the pigeonhole principle, there exists a $k$-tuple $(\xi_1, \dotsc, \xi_k)\in \mathbb{C}^k$ with $|\xi_i| = 1$ for every $i$ such that, writing
\[
A = \{a \in \mathbb{Z}_q^{\times} \colon |\chi_j(a) - \xi_j| \leq \delta/10 \text{ for every $j = 1, \dotsc, k$}\},
\] 
we have
\[
|A| \geq (100/\delta+1)^{-k} \varphi(q).
\]
But now if $x, y \in A$, then $x/y \in B$. Hence, using~\eqref{eq:deltabound},
\begin{equation}
\label{eq:Blow}
|B| \geq |A| \geq \varphi(q) \exp(-C\delta^{-r} \log(100/\delta+1)) \geq q^{1-\varepsilon/2}.
\end{equation}

We define $g\colon \mathbb{Z}_q^\times \to \mathbb{R}$ by
\[
g(a) \coloneqq \varphi(q) \mathbb{E}_{b_1, b_2 \in B} \mathbb{E}_{n \in [N]_q} f(n) \mathbf{1}_{n \equiv a b_1 b_2^{-1} \pmod{q}}.
\]
Let us now verify the claims (i)--(v).

\textbf{Claim (i):} The lower bound $g(a) \geq 0$ is immediate from the definition of $g$. By the definition of $g$, assumption (A1) and orthogonality of characters,
\begin{align*}
g(a) &\leq \varphi(q) \mathbb{E}_{b_1, b_2 \in B} \mathbb{E}_{n \in [N]_q} \nu(n) \mathbf{1}_{n \equiv ab_1b_2^{-1} \pmod{q}}\\
&= \sum_{\chi \pmod{q}} \mathbb{E}_{n \in [N]_q} \nu(n) \overline{\chi}(n) \chi(a) \left|\mathbb{E}_{b \in B} \chi(b)\right|^2.
\end{align*}
Separating the contribution of the principal character, we obtain
\begin{align*}
g(a) &\leq \mathbb{E}_{n \in [N]_q} \nu(n) + O\left(\max_{\chi \neq \chi_0} \left|\mathbb{E}_{n \in [N]_q} \nu(n) \overline{\chi}(n)\right| \sum_{\chi \pmod{q}} \left|\mathbb{E}_{b \in B} \chi(b)\right|^2 \right).
\end{align*}
By the assumption (A1), Lemma~\ref{le:MVT}, and~\eqref{eq:Blow}, we obtain 
\[
g(a) \leq 1+\eta + O\left(\frac{q^{1-\varepsilon}}{|B|}\right) \leq 1+\eta + O(q^{-\varepsilon/2}).
\]

\textbf{Claim (ii):} Changing the order of summation and using the orthogonality of characters, we see that
\begin{equation}
\label{eq:gFTfFTchi}
\begin{split}
\mathbb{E}_{a \in \mathbb{Z}_q^\times} g(a) \overline{\chi}(a) &= \mathbb{E}_{b_1, b_2 \in B} \mathbb{E}_{a \in \mathbb{Z}_q^{\times}} \overline{\chi}(a) \varphi(q) \mathbb{E}_{n \in [N]_q} f(n) \mathbf{1}_{n \equiv a b_1 b_2^{-1} \pmod{q}} \\
& = \mathbb{E}_{b_1, b_2 \in B} \chi(b_1) \overline{\chi}(b_2) \mathbb{E}_{a \in \mathbb{Z}_q^{\times}} \overline{\chi}(a) \chi(a) \mathbb{E}_{n \in [N]_q} f(n) \overline{\chi}(n) \\
&= \left|\mathbb{E}_{b \in B} \chi(b)\right|^2 \mathbb{E}_{n \in [N]_q} f(n) \overline{\chi}(n).
\end{split}
\end{equation}
Hence
\begin{equation}
\label{eq:hFT}
\mathbb{E}_{n \in [N]_q} f(n) \overline{\chi}(n) - \mathbb{E}_{a \in \mathbb{Z}_q^\times} g(a) \overline{\chi}(a) = \mathbb{E}_{n \in [N]_q} f(n) \overline{\chi}(n) \left(1-\left|\mathbb{E}_{b \in B} \chi(b)\right|^2\right). 
\end{equation}

If now $\chi \not \in T$, then by~\eqref{eq:hFT} and the definition of $T$, we have
\[
\left|\mathbb{E}_{n \in [N]_q} f(n) \overline{\chi}(n) - \mathbb{E}_{a \in \mathbb{Z}_q^\times} g(a) \overline{\chi}(a)\right| \leq \left|\mathbb{E}_{n \in [N]_q} f(n) \overline{\chi}(n)\right| \leq \delta.
\]
On the other hand, if $\chi \in T$, then for every $b \in B$ we have $|\chi(b)-1| \leq \delta/5$ and thus by~\eqref{eq:hFT} and (A1) we obtain
\begin{align*}
\left|\mathbb{E}_{n \in [N]_q} f(n) \overline{\chi}(n) - \mathbb{E}_{a \in \mathbb{Z}_q^\times} g(a) \overline{\chi}(a)\right| &\leq \mathbb{E}_{n \in [N]_q} \nu(n) \left|1-\left|\mathbb{E}_{b \in B} \chi(b)\right|^2\right| \\
& \leq (1+\eta) \left(\left(1+\frac{\delta}{5}\right)^2-1\right).
\end{align*}
Since by assumptions of the proposition $\eta < 1$ and $\delta < 1/10$, the last expression is $\leq \delta$ and claim (ii) follows.

\textbf{Claim (iii):} This is immediate from~\eqref{eq:gFTfFTchi} and~\eqref{eq:hFT}.

\textbf{Claim (iv):} This follows from~\eqref{eq:gFTfFTchi} with $\chi = \chi_0$.

\textbf{Claim (v):} Let
\[
D = \{ \chi \pmod{q} \colon \chi(n) = 1 \text{ for every $n \in H$}\}.
\]
By~\eqref{eq:incoset}
\begin{equation}
\label{eq:cosdif}
\mathbb{E}_{\substack{n \in [N]_q}} f(n) \mathbf{1}_{n \in bH} - \mathbb{E}_{a \in \mathbb{Z}_q^\times} g(a) \mathbf{1}_{a \in bH} = \frac{1}{Y} \sum_{\chi \in D} \chi(b) \left(\mathbb{E}_{\substack{n \in [N]_q}} f(n) \overline{\chi}(n) - \mathbb{E}_{a \in \mathbb{Z}_q^\times} g(a) \overline{\chi}(a)\right)
\end{equation}

If now $\chi \not \in T$, then by (iii) and the definition of $T$ we have 
\[
\left|\mathbb{E}_{a \in \mathbb{Z}_q^{\times}}  g(a) \overline{\chi}(a) \right| \leq \left|\mathbb{E}_{n \in [N]_q} f(n) \overline{\chi}(n) \right| \leq \delta.
\]
On the other hand if $\chi \in T$, then (since $\delta < 1/(2Y)$ and $\chi$ takes values in $Y$th roots of unity) by the definition of $B$ we have $\chi(b) = 1$ for every $b \in B$ and so by~\eqref{eq:gFTfFTchi} we have
\[
\mathbb{E}_{a \in \mathbb{Z}_q^{\times}} g(a)\overline{\chi}(a)= \mathbb{E}_{n \in [N]_q} f(n) \overline{\chi}(n).
\]

Consequently, in any case 
\[
\mathbb{E}_{a \in \mathbb{Z}_q^{\times}} g(a)\overline{\chi}(a)= \mathbb{E}_{n \in [N]_q} f(n) \overline{\chi}(n) + O(\delta)
\]
and the claim (v) follows from inserting this into~\eqref{eq:cosdif}.
\end{proof}

\section{Applying the transference principle}
\label{se:applytrans}
Let $\varepsilon > 0$, $N \geq q^{1/2}$, and define 
\begin{equation}
\label{eq:thetaDef}
\theta = \theta(N, q) \coloneqq 
\begin{cases}
1-\varepsilon-\frac{\log q}{4\log N} & \text{if $q$ is cube-free;} \\
1-\varepsilon-\frac{\log q}{3\log N} & \text{otherwise},
\end{cases}
\end{equation}
and
\begin{equation}
\label{eq:theta0Def}
\theta_0 = \theta_0(N, q) \coloneqq 1-\varepsilon-\frac{\log q}{4\log N}.
\end{equation}

\begin{remark}\label{rmk:burgess} The only property of $\theta(N,q)$ needed in what follows is that 
\begin{align}\label{eq:burgessbound2}
\max_{\substack{\chi\pmod q\\\chi\neq \chi_0}} \max_{N^{1-\theta} \leq y \leq N} \frac{1}{y}\left|\sum_{n\leq y}\chi(n)\right|\ll N^{-\varepsilon_0}    
\end{align}
for some $\varepsilon_0>0$. Similarly, $\theta_0(N,q)$ is any exponent such that~\eqref{eq:burgessbound2} holds with $\theta_0$ in place of $\theta$ when the maximum is taken over characters $\chi\pmod q$ of bounded order.  
\end{remark}

The following proposition transfers the problem of studying $E_3(N)$ into a dense setup. We only need a qualitative result for $N \geq q$, but prove a more quantitative result in a larger range which might be helpful for further applications, or for working out a value of $\alpha$ for which $E_3(q^{1-\alpha}) = \mathbb{Z}_q^\times$ for every sufficiently large cube-free $q \in \mathbb{N}$.

\begin{proposition}[Conclusion of transference]\label{prop:transfer} Let $q \in \mathbb{N}$ be sufficiently large. Let $\kappa>0$ and $C \geq 1$ be fixed and let $N\in [q^{2/3+\kappa}, q^C]$. Let $\varepsilon>0$ be sufficiently small but fixed.  Then there exists a set $A\subseteq \mathbb{Z}_q^{\times}$ such that the following hold.
\begin{enumerate}[(i)]
\item We have
\begin{align*}
|A|\geq \left(\frac{\theta}{2}-\varepsilon\right)\varphi(q).
\end{align*}
\item For all but $O(\varphi(q) (\log q)^{-\varepsilon/5})$ values of $a\in \mathbb{Z}_q^{\times}$ we have
\begin{align}
\label{eq:doubleconv}
(\mathbf{1}_A*\mathbf{1}_A)(a) \gg \frac{\varphi(q)}{(\log q)^{1/2-\varepsilon}} \implies a \in E_2(N).
\end{align}
\item If $N \geq q^{4/5+\kappa}$, then
\begin{align}
\label{eq:tripconv}
(\mathbf{1}_A*\mathbf{1}_A*\mathbf{1}_A)(a) \gg \frac{\varphi(q)^2}{(\log q)^{1/2-\varepsilon}} \implies a \in E_3(N).
\end{align}
\item Assume that $N \geq q$ and let $\alpha_1 \in (2\varepsilon, 1]$. Write $L = \lfloor 1/\alpha_1\rfloor$. Then
\[
\sum_{\substack{a = p a_1 a_2 \\ q^{\alpha_1-\varepsilon} < p \leq q^{\alpha_1} \\ a_1, a_2 \in A}} \frac{1}{p} \gg \frac{\varphi(q)}{(\log q)^{\frac{1}{2L}-\varepsilon}} \implies a \in E_3(N).
\]
\item For any subgroup $H\leq \mathbb{Z}_q^{\times}$ of index $Y < \varepsilon^{-1/2}$, $A$ contains $>\varepsilon\varphi(q)$ elements from at least 
\[
\left\lceil \left(\frac{\theta_0}{2}-3\varepsilon^{1/2}\frac{\theta_0}{\theta}\right)Y\right\rceil 
\]
distinct cosets of $H$. 
\item For any subgroup $H\leq \mathbb{Z}_q^{\times}$ of index $Y < \varepsilon^{-1/2}$ and any $b\in \mathbb{Z}_q^{\times}$, we have
\begin{equation}
\label{eq:A'binprimes1}
|A\cap bH|\geq \left(\frac{\theta}{2}\cdot \frac{|E_1(N)\cap bH|}{N/\log N}-\frac{\varepsilon}{5Y}\right)\varphi(q).
\end{equation}
\end{enumerate}
\end{proposition}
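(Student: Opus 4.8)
The plan is to apply the multiplicative dense model theorem (Proposition~\ref{prop:f=g+h}) to the weighted indicator of primes, and then to decode the resulting model function $g$ into the set $A$. Concretely, I would set $N$ as given and work with $f\colon [N]_q\to\R_{\geq0}$ defined by $f(n)=\frac{\varphi(q)}{q}(\log n)\mathbf 1_{n\in\P}$, so that $\E_{n\in[N]_q}f(n)\to 1$ by the prime number theorem and~\eqref{eq:Nqsize}. For the majorant $\nu$ in hypothesis (A1) I would use a linear sieve upper bound majorant built from Lemma~\ref{le:linear} with sifting level $D=N^{\theta}$ (resp.\ $N^{\theta_0}$ for bounded-order characters): the sieve weights applied to $n$ supported on $[N^{1-\theta},N]$ give $f(n)\leq\nu(n)$ with $\nu$ having the property that $\E_{n\in[N]_q}\nu(n)\leq F_0(s)+o(1)$ with $s$ close to $1/(1-\theta)$, which is where the factor $\theta/2$ (rather than $1/2$) ultimately comes from; the twisted averages $\E_{n\in[N]_q}\nu(n)\overline\chi(n)$ are controlled via~\eqref{eq:burgessbound2}, i.e.\ the Burgess bound, after expanding the sieve weights and using that $d\leq N^\theta$ forces $N/d\geq N^{1-\theta}$. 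Hypothesis (A2) — that few characters have large $|\E_{n\in[N]_q}f(n)\overline\chi(n)|$ — follows from a Hal\'asz--Montgomery type large value estimate: Lemma~\ref{le:Hal-Mon}(i) applied to the sieved sequence (after removing small prime factors with an extra sieve, using the prime number theorem to pass between $\Lambda$ and $\mathbf 1_\P\log$) shows at most $O(\delta^{-r})$ characters can have Fourier coefficient $\geq\delta$, for suitable $r\in(2,3)$.

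With $g\colon\Z_q^\times\to[0,1+o(1)]$ produced by the proposition, I would define $A=\{a\in\Z_q^\times\colon g(a)\geq\varepsilon\}$ (or a comparable threshold). Part~(iv) of Proposition~\ref{prop:f=g+h} gives $\E_{a}g(a)=\E_{n\in[N]_q}f(n)=1+o(1)$ while part~(i) gives $g\leq 1+o(1)$ after normalizing by the sieve constant; actually one must be careful that $g\leq F_0(s)/... $, and by a standard Markov-type argument $|\{g\geq\varepsilon\}|\geq(\theta/2-\varepsilon)\varphi(q)$ once the normalization is tracked — this yields~(i). For~(ii) and~(iii) I would use~\eqref{eq:starting2} (and its binary analogue), split off the principal character, and replace $\sum_{p\leq q}\overline\chi(p)\log p$ by $\frac{q}{\varphi(q)}\E_a g(a)\overline\chi(a)$ up to an error that is $\delta$ per character by part~(ii) of the dense model theorem; the total error is then controlled by pairing one "$\delta$" factor against an $L^2$ (mean value theorem, Lemma~\ref{le:MVT}) or an $L^{r}$-type large value bound (Lemmas~\ref{le:Huxley},~\ref{le:Hal-Mon}) for the other factors, exactly as in the transference principle, giving an $o(\varphi(q))$ (resp.\ $o(\varphi(q)^2)$) error off an exceptional set of size $O(\varphi(q)(\log q)^{-\varepsilon/5})$. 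Comparing $\sum f(n_1)f(n_2)f(n_3)$ with $(\mathbf 1_A*\mathbf 1_A*\mathbf 1_A)(a)$ and noting $f\geq(\log)\mathbf 1_\P$ while on $A$ we can bound $g$ below by $\varepsilon\mathbf 1_A$ then gives the implications; the constraint $N\geq q^{4/5+\kappa}$ in~(iii) comes from needing the large value input (Lemma~\ref{le:Hal-Mon}(i) with $R$-term $q^{1/2+2\varepsilon}R$) to beat the main term when cubing, and $N\geq q^{2/3+\kappa}$ suffices for the binary statement~(ii).

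For~(iv), which treats $a=pa_1a_2$ with $p\in(q^{\alpha_1-\varepsilon},q^{\alpha_1}]$ small, I would use the dense model only for the two large primes $p_2,p_3$ and keep the $p_1$ sum as an honest (logarithmically weighted) character sum, writing the count as $\frac1{\varphi(q)}\sum_\chi(\sum_{p\leq q}\overline\chi(p)\log p)^2(\sum_{q^{\alpha_1-\varepsilon}<p_1\leq q^{\alpha_1}}\overline\chi(p_1)/p_1)\chi(a)$; replacing the two large-prime sums by $g$ costs $\delta$ each, and the $p_1$-sum is bounded in an appropriate moment by combining Lemma~\ref{le:Hal-Mon}(ii) (or~(i)) with the trivial $\ell^1$ bound $\sum 1/p_1\ll 1$. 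The power $(\log q)^{\frac1{2L}-\varepsilon}$ with $L=\lfloor1/\alpha_1\rfloor$ arises from splitting the $p_1$-sum dyadically into $\asymp L$ pieces and applying Hölder with exponent $\asymp L$ to the moment of the $p_1$ character sum. Parts~(v) and~(vi) are then consequences of part~(v) of Proposition~\ref{prop:f=g+h}: applying it to each coset $bH$ of a fixed-index subgroup $H$ with $\delta$ small (permissible since $Y<\varepsilon^{-1/2}<1/(2\delta)$) gives $\E_{n\in[N]_q}f(n)\mathbf 1_{n\in bH}=\E_a g(a)\mathbf 1_{a\in bH}+O(\delta)$; the left side is $\frac{\varphi(q)}{q}\cdot\frac{|E_1(N)\cap bH|}{?}$-type quantity up to the prime number theorem and the right side is at most $(1+o(1))\frac{|A\cap bH|}{\varphi(q)}\cdot\frac{Y\varphi(q)}{?}+\varepsilon^2$, which after rearrangement gives~\eqref{eq:A'binprimes1}, and summing the lower bounds over cosets and pigeonholing gives the count of "heavy" cosets in~(v), using that $\sum_{bH}|E_1(N)\cap bH|$ is all of $E_1(N)$ which has size $(1+o(1))N/\log N$ by the prime number theorem (and the sieve lower bound Lemma~\ref{le:P2} to handle the $\theta_0$ vs $\theta$ discrepancy in the index-$Y$ case, where only bounded-order characters appear and Burgess is lossless).

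The main obstacle I anticipate is \emph{bookkeeping the normalization constants from the sieve majorant} and verifying hypothesis (A1) with a genuinely small error: one needs $\E_{n\in[N]_q}\nu(n)$ to be exactly (not just up to a constant) the linear sieve value $F_0(s)+o(1)$ so that the threshold defining $A$ yields the sharp $\theta/2$ and not something lossy, and one needs the twisted sums $\E_{n\in[N]_q}\nu(n)\overline\chi(n)\ll q^{-\varepsilon}$ uniformly over $\chi\neq\chi_0$, which forces the sifting level $D=N^\theta$ to be chosen so that every dilate $N/d$ still lies above the Burgess threshold $N^{1-\theta}$ — this is precisely why $\theta$ is defined with the $\frac{\log q}{4\log N}$ (resp.\ $\frac{\log q}{3\log N}$) correction. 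A secondary technical point is passing cleanly between $\Lambda(n)$, $\mathbf 1_\P(n)\log n$, and the sieve-majorized object while also removing small prime factors for the large-value inputs; this is routine but must be done so that the exceptional set in~(ii) stays $O(\varphi(q)(\log q)^{-\varepsilon/5})$ rather than something larger.
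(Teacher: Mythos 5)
Your proposal is correct and follows essentially the same route as the paper: a dense model theorem applied to a Burgess-normalized linear-sieve majorant of the primes, $A$ defined as a superlevel set of the model $g$, transference of the binary/ternary counts via the mean value theorem, Huxley, and Hal\'asz--Montgomery large-value splittings, a $2L$-th moment plus H\"older for the logarithmically weighted part (iv), and Proposition~\ref{prop:f=g+h}(v) together with Lemma~\ref{le:P2} for parts (v)--(vi). The only differences are cosmetic (the paper puts the normalization $\theta/2$ into $f$ itself rather than rescaling $\nu$, uses $r=2$ in (A2), and obtains the $N\geq q^{4/5+\kappa}$ threshold from the Huxley bound on the intermediate spectrum rather than from Lemma~\ref{le:Hal-Mon}(i)), none of which affects the argument.
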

Proposition~\ref{prop:transferQR} below adds one more conclusion to this proposition.

\begin{remark}
In the case where $q$ is cube-free and $N = q$, we obtain the bound $|A| \geq (3/8-3\varepsilon/2)\varphi(q)$ which is rather good. However, when $q$ is not cube-free, we only obtain that $|A| \geq (1/3-3\varepsilon/2)\varphi(q)$ which is problematic --- for instance $A$ could be stuck in a single coset of a subgroup of index $3$ and we could not deduce Theorem~\ref{thm:E2>1/2}(i). However, we can actually rule out this scenario by utilizing part (v) of the proposition.
\end{remark}

\begin{proof}[Proof of Proposition~\ref{prop:transfer}]
We let $D = N^\theta$ and $z = N^{\theta/3}$. We shall apply Proposition~\ref{prop:f=g+h} with $r = 2$,
\begin{equation}
\label{eq:deltadef}
\delta = \frac{1}{(\log q)^{1/2-\varepsilon/2}},
\end{equation}
and with $f, \nu \colon [N]_q \to \mathbb{R}_{\geq 0}$ defined by
\begin{equation}
\label{eq:fDef}
f(n) \coloneqq \frac{\theta}{2} \cdot \frac{\varphi(q)}{q} \log N \cdot \mathbf{1}_{n \in \mathbb{P}} \mathbf{1}_{n \geq z}
\end{equation}
and
\begin{equation}
\label{eq:nuDef}
\nu(n) \coloneqq \frac{\theta}{2} \cdot \frac{\varphi(q)}{q} \log N \cdot \sum_{\substack{d \mid n \\ d \leq D}} \lambda_d^+,
\end{equation}
where $\lambda_d^+$ are the upper bound linear sieve weights from Lemma~\ref{le:linear} with $D$ and $z$ as above (and $s = 3$), so that $f(n) \leq \nu(n)$ for every $n \in [N]_q$.

Now $f$ satisfies Proposition~\ref{prop:f=g+h}(A2) for some $C = O(1)$ by Lemma~\ref{le:Hal-Mon}(i). Let us confirm that $\nu$ satisfies Proposition~\ref{prop:f=g+h}(A1) for some $\eta=o(1)$. For any character $\chi \pmod{q}$, we have
\begin{align}
\label{eq:nuFT}
\mathbb{E}_{n \in [N]_q} \nu(n) \overline{\chi}(n) 
&=  \frac{\theta}{2} \cdot \frac{\varphi(q)}{q} \log N \cdot \mathbb{E}_{\substack{n \in [N]_q}} \overline{\chi}(n) \sum_{\substack{d \mid n \\ d \leq D}} \lambda_d^+ \\
\nonumber
&= \frac{\theta}{2} \cdot \frac{\varphi(q)}{q} \log N \sum_{d\leq D} \lambda_d^+ \overline{\chi}(d) \frac{1}{|[N]_q|} \sum_{\substack{m \leq N/d}} \overline{\chi}(m).
\end{align}
By the choice of $\theta$, here $N/d \geq q^{1/4+\varepsilon/4}$ when $q$ is cube-free and $N/d \geq q^{1/3+\varepsilon/4}$ otherwise. Hence when $\chi \neq \chi_0$, the Burgess bound (Lemma~\ref{le:Burgess}) gives that the innermost sum is $O(q^{-2\varepsilon_0}N/d)$ for some $\varepsilon_0=\varepsilon_0(\varepsilon) > 0$ and thus we have that, for any $\chi \neq \chi_0$,
\[
|\widehat{\nu}(\chi)| \ll q^{1-\varepsilon_0}.
\]

On the other hand, by~\eqref{eq:nuFT} and~\eqref{eq:Simplegcd},
\begin{equation}
\label{eq:nuFTchi0}
\begin{aligned}
\mathbb{E}_{n \in [N]_q} \nu(n) &= \mathbb{E}_{n \in [N]_q} \nu(n)\overline{\chi_0}(n) =  \frac{\theta}{2} \cdot \frac{\varphi(q)}{q} \cdot \log N \sum_{\substack{d \leq D \\ (d,q) = 1}} \lambda_d^+ \frac{1}{|[N]_q|} \sum_{\substack{n \leq N/d \\ (n, q) = 1}} 1 \\
&= \frac{\theta}{2} \cdot \frac{\varphi(q)}{q} \cdot \log N \frac{1}{N\frac{\varphi(q)}{q}+O(\tau(q))} \sum_{\substack{d \leq D \\ (d,q) = 1}} \lambda_d^+ \left(\frac{\varphi(q)N}{qd} + O(\tau(q))\right) \\
&= \frac{\theta}{2} \cdot \frac{\varphi(q)}{q} \log N \sum_{\substack{d \leq D \\ (d,q) = 1}} \frac{\lambda_d^+}{d} + O\left(\frac{Dq^{\varepsilon}\log N}{N}\right).
\end{aligned}
\end{equation}
By Lemma~\ref{le:linear}(iii) and Mertens' theorem,
\begin{equation}
\label{eq:lam+eval}
\sum_{\substack{d \leq D \\ (d,q) = 1}} \frac{\lambda_d^+}{d} = (2e^\gamma+o(1)) \frac{\log z}{\log D} \prod_{\substack{p < z \\ p \nmid q}} \left(1-\frac{1}{p}\right) = (1+o(1)) \frac{2}{\theta \log N} \cdot \frac{q}{\varphi(q)}
\end{equation}
and thus
\[
\mathbb{E}_{n \in [N]_q} \nu(n)  = 1+o(1)
\]
and Proposition~\ref{prop:f=g+h}(A1) holds for some $\eta = o(1)$.

Having established the assumptions, we may now apply Proposition~\ref{prop:f=g+h} with $r = 2$ and $\delta, f$ and $\nu$ as in~\eqref{eq:deltadef},~\eqref{eq:fDef} and~\eqref{eq:nuDef}. Hence we can find a function $g \colon \mathbb{Z}_q^\times \to [0, 1+o(1)]$ obeying Proposition~\ref{prop:f=g+h}(i)--(v). By Proposition~\ref{prop:f=g+h}(iv) and the prime number theorem,
\begin{equation}
\label{eq:fgav}
\mathbb{E}_{a \in \mathbb{Z}_q^\times} g(a) = \mathbb{E}_{n \in [N]_q} f(n) = \frac{\theta}{2} \cdot \frac{\varphi(q)}{q} \log N \cdot \mathbb{E}_{n \in [N]_q} \mathbf{1}_{n \in \mathbb{P}} \mathbf{1}_{n \geq z} = \frac{\theta}{2} + O((\log q)^{-1}).
\end{equation}
We will constantly use the fact that, by the definition of $f$, the prime number theorem, and~\eqref{eq:Nqsize},
\begin{equation}
\label{eq:fMS}
\frac{1}{|[N]_q|^2} \sum_{n \in [N]_q} f(n)^2 \ll \frac{1}{|[N]_q|^2}  \left(\frac{\varphi(q)}{q}\right)^2 N \log N \ll \frac{\log N}{N}.
\end{equation}
Write
\begin{align}\label{eq:Adef}
A \coloneqq \{a \in \mathbb{Z}_q^{\times} \colon |g(a)| \geq \varepsilon/10\}.
\end{align}
Now we are ready to establish the claims (i)--(v) of Proposition~\ref{prop:transfer}.

\textbf{Claim (i):} By Proposition~\ref{prop:f=g+h}(i),
\[
\mathbb{E}_{a \in \mathbb{Z}_q^\times} g(a) = \frac{1}{\varphi(q)}\left(\sum_{a \in \mathbb{Z}_q^\times \setminus A} g(a) + \sum_{a \in A} g(a)\right) \leq \frac{\varepsilon}{10} + \frac{|A|}{\varphi(q)}(1+o(1)).
\]
Combining this with~\eqref{eq:fgav} we obtain
\begin{align}\label{eq:A_theta}
|A| \geq \left(\frac{\theta}{2}-\varepsilon\right)\varphi(q).
\end{align}

\textbf{Claim (iii):} Note that definitely $a \in E_3(N)$ if $(f \ast f \ast f)(n) > 0$ for some $n \equiv a \pmod{q}$. Now, by orthogonality of characters and Proposition~\ref{prop:f=g+h}(iii) we obtain that
\begin{align}
\begin{aligned}
\label{eq:fff-ggg}
&\frac{1}{|[N]_q|^3} \sum_{n \equiv a \pmod{q}} (f \ast f \ast f)(n) - \frac{1}{\varphi(q)^3} (g \ast g \ast g)(a) \\
&=\frac{1}{\varphi(q)} \sum_{\chi \pmod{q}} \left(\mathbb{E}_{n \in [N]_q} f(n) \overline{\chi}(n)\right)^3 \chi(a) - \frac{1}{\varphi(q)} \sum_{\chi \pmod{q}} \left(\mathbb{E}_{b \in \mathbb{Z}_q^\times} g(b) \overline{\chi}(n)\right)^3 \chi(a) \\
&= O\left(\frac{1}{\varphi(q)} \sum_{\chi \pmod{q}} \left|\mathbb{E}_{n \in [N]_q} f(n) \overline{\chi}(n)\right|^2 \left|\mathbb{E}_{n \in [N]_q} f(n) \overline{\chi}(n) - \mathbb{E}_{b \in \mathbb{Z}_q^\times} g(b) \overline{\chi}(b)\right|\right).
\end{aligned}
\end{align}
To bound the right-hand side, we split the characters into three sets
\begin{equation}
\label{eq:X1def}
\mathcal{X}_1 = \left\{\chi \pmod{q} \colon \left|\mathbb{E}_{n \in [N]_q} f(n) \overline{\chi}(n)\right| \leq \frac{1}{N^{1/4}}\right\},
\end{equation}
\begin{equation}
\label{eq:X2def}
\mathcal{X}_2 = \left\{\chi \pmod{q} \colon \frac{1}{N^{1/4}} < \left|\mathbb{E}_{n \in [N]_q} f(n) \overline{\chi}(n)\right| \leq \frac{q^{1/4+\kappa}}{N^{1/2}}\right\},
\end{equation}
and
\begin{equation}
\label{eq:X3def}
\mathcal{X}_3 = \left\{\chi \pmod{q} \colon \left|\mathbb{E}_{n \in [N]_q} f(n) \overline{\chi}(n)\right| > \frac{q^{1/4+\kappa}}{N^{1/2}}\right\}.
\end{equation}

By the definition of $\mathcal{X}_1$, Proposition~\ref{prop:f=g+h}(iii), Lemma~\ref{le:MVT} and~\eqref{eq:fMS}, we see that 
\begin{align}
\begin{aligned}
\label{eq:iiiX1}
&\frac{1}{\varphi(q)} \sum_{\chi \in \mathcal{X}_1} \left|\mathbb{E}_{n \in [N]_q} f(n) \overline{\chi}(n)\right|^2 \left|\mathbb{E}_{n \in [N]_q} f(n) \overline{\chi}(n) - \mathbb{E}_{b \in \mathbb{Z}_q^\times} g(b) \overline{\chi}(b)\right| \\
&\leq \frac{1}{\varphi(q)} \frac{1}{N^{1/4}} \sum_{\chi \pmod{q}} \left|\mathbb{E}_{n \in [N]_q} f(n) \overline{\chi}(n)\right|^2 \\
&\ll \frac{1}{\varphi(q)} \frac{1}{N^{1/4}} \left(N+\varphi(q)\right)\frac{\log N}{N} \ll \frac{q^{\kappa}}{N^{1/4} q} + \frac{q^{\kappa}}{N^{5/4}}.
\end{aligned}
\end{align}
This is $O(1/q^{1+\kappa/4})$ when $N \geq q^{4/5+\kappa}$.

On the other hand, by the definition of $\mathcal{X}_3$, Lemma~\ref{le:Hal-Mon}(i) with $\varepsilon = \kappa/2$ and~\eqref{eq:fMS},
\begin{align*}
\left(\frac{q^{1/4+\kappa}}{N^{1/2}}\right)^2 |\mathcal{X}_3| &\leq  \sum_{\chi \in \mathcal{X}_3} \left|\mathbb{E}_{n \in [N]_q} f(n) \overline{\chi}(n)\right|^2 \ll \left(\frac{N}{\log q} + q^{1/2+\kappa} |\mathcal{X}_3| \right) \frac{\log N}{N}.
\end{align*}
Now the second term cannot dominate and so 
\begin{equation}
\label{eq:fnsecondHM}
\sum_{\chi \in \mathcal{X}_3} \left|\mathbb{E}_{n \in [N]_q} f(n) \overline{\chi}(n)\right|^2 \ll 1.
\end{equation}
Combining this with Proposition~\ref{prop:f=g+h}(ii), we obtain 
\begin{equation}
\label{eq:iiiX3}
\frac{1}{\varphi(q)} \sum_{\chi \in \mathcal{X}_3} \left|\mathbb{E}_{n \in [N]_q} f(n) \overline{\chi}(n)\right|^2 \left|\mathbb{E}_{n \in [N]_q} f(n) \overline{\chi}(n) - \mathbb{E}_{b \in \mathbb{Z}_q^\times} g(b) \overline{\chi}(b)\right| \ll \frac{\delta}{\varphi(q)}.
\end{equation}

To deal with $\mathcal{X}_2$, we write, for $\delta_0 \in [N^{-1/4}, q^{1/4+\kappa}/N^{1/2}]$,
\begin{equation}
\label{eq:X2deldef}
\mathcal{X}_{2, \delta_0} := \left\{\chi \pmod{q} \colon \delta_0 < \left|\mathbb{E}_{n \in [N]_q} f(n) \overline{\chi}(n)\right| \leq 2\delta_0\right\}.
\end{equation}
If we can show that, for any $\delta_0 \in [N^{-1/4}, q^{1/4+\kappa}/N^{1/2}]$, we have
\begin{equation}
\label{eq:X2delbound}
|\mathcal{X}_{2, \delta_0}| \ll \frac{\delta_0^{-3}}{(\log q)^{10}}, 
\end{equation}
then we obtain using also Proposition~\ref{prop:f=g+h}(iii) that
\begin{align}
\begin{aligned}
\label{eq:iiiX2}
&\frac{1}{\varphi(q)} \sum_{\chi \in \mathcal{X}_2} \left|\mathbb{E}_{n \in [N]_q} f(n) \overline{\chi}(n)\right|^2 \left|\mathbb{E}_{n \in [N]_q} f(n) \overline{\chi}(n) - \mathbb{E}_{b \in \mathbb{Z}_q^\times} g(b) \overline{\chi}(b)\right| \\
&\ll \frac{1}{\varphi(q)}(\log q) \max_{\delta_0 \in [N^{-1/4}, q^{1/4+\kappa}/N^{1/2}]} \sum_{\chi \in \mathcal{X}_{2, \delta_0}} \left|\mathbb{E}_{n \in [N]_q} f(n) \overline{\chi}(n)\right|^3 \ll \frac{1}{\varphi(q) (\log q)^9}.
\end{aligned}
\end{align}
Now~\eqref{eq:X2delbound} follows for $\delta_0 \in [N^{-1/4}, q^{1/4+\kappa}/N^{1/2}]$ and $N \geq q^{4/5+\kappa}$ since Lemma~\ref{le:Huxley} implies that
\begin{equation}
\label{eq:X2delHux}
|\mathcal{X}_{2, \delta_0}| \ll \delta_0^{-2} (\log q)^7 + \delta_0^{-6} q N^{-2} (\log q)^{21}.
\end{equation}

Using~\eqref{eq:iiiX1},~\eqref{eq:iiiX3}, and~\eqref{eq:iiiX2} on the right-hand side of~\eqref{eq:fff-ggg}, we obtain
\begin{equation}
\label{eq:fff=ggg}
\frac{1}{|[N]_q|^3} \sum_{n \equiv a \pmod{q}} (f \ast f \ast f)(n) = \frac{1}{\varphi(q)^3} (g \ast g \ast g)(a) + O\left(\frac{\delta}{\varphi(q)}\right).
\end{equation}

By the definitions of $f$ and $\delta$ and~\eqref{eq:fff=ggg} we obtain that $a \in E_3(N)$ if
\[
(g \ast g \ast g)(a) \gg \frac{\varphi(q)^2}{(\log q)^{1/2-\varepsilon}}.
\]
But this obviously holds if $(\mathbf{1}_A \ast \mathbf{1}_A \ast \mathbf{1}_A)(a) \gg \varphi(q)^2 (\log q)^{-1/2+\varepsilon}$.

\textbf{Claim (ii):}
Now definitely $a \in E_2(N)$ if $(f \ast f)(n) > 0$ for some $n \equiv a \pmod{q}$. By the orthogonality of characters 
\begin{align}
\begin{aligned}
\label{eq:orthbin}
&\frac{1}{|[N]_q|^2} \sum_{n \equiv a \pmod{q}} (f \ast f)(n) - \frac{1}{\varphi(q)^2} (g \ast g)(a)\\
=& \frac{1}{\varphi(q)} \sum_{\chi \pmod{q}} \left(\left(\mathbb{E}_{n \in [N]_q} f(n) \overline{\chi}(n)\right)^2 - \left(\mathbb{E}_{b \in \mathbb{Z}_q^\times} g(b) \overline{\chi}(b)\right)^2\right) \chi(a).
\end{aligned}
\end{align}
By orthogonality of characters and Proposition~\ref{prop:f=g+h}(iii),
\begin{align}
\begin{aligned}
\label{eq:ff-gg}
&\sum_{a \in \mathbb{Z}_q^\times} \left|\frac{1}{\varphi(q)} \sum_{\chi \pmod{q}} \left(\left(\mathbb{E}_{n \in [N]_q} f(n) \overline{\chi}(n)\right)^2 - \left(\mathbb{E}_{b \in \mathbb{Z}_q^\times} g(b) \overline{\chi}(b)\right)^2\right) \chi(a)\right|^2 \\
&= \frac{1}{\varphi(q)} \sum_{\chi \pmod{q}} \left|\left(\mathbb{E}_{n \in [N]_q} f(n) \overline{\chi}(n)\right)^2 - \left(\mathbb{E}_{b \in \mathbb{Z}_q^\times} g(b) \overline{\chi}(b)\right)^2\right|^2 \\
&\ll \frac{1}{\varphi(q)} \sum_{\chi \pmod{q}} \left|\mathbb{E}_{n \in [N]_q} f(n) \overline{\chi}(n)\right|^2 \left|\mathbb{E}_{n \in [N]_q} f(n) \overline{\chi}(n) - \mathbb{E}_{b \in \mathbb{Z}_q^\times} g(b) \overline{\chi}(b)\right|^2.
\end{aligned}
\end{align}

We split the characters into three sets $\mathcal{X}_1 \cup \mathcal{X}_2 \cup \mathcal{X}_3$ as in~\eqref{eq:X1def},~\eqref{eq:X2def}, and~\eqref{eq:X3def}. By the definition of $\mathcal{X}_1$, Proposition~\ref{prop:f=g+h}(iii), Lemma~\ref{le:MVT}, and~\eqref{eq:fMS} we see that 
\begin{align}
\begin{aligned}
\label{eq:iiX1}
&\frac{1}{\varphi(q)} \sum_{\chi \in \mathcal{X}_1} \left|\mathbb{E}_{n \in [N]_q} f(n) \overline{\chi}(n)\right|^2 \left|\mathbb{E}_{n \in [N]_q} f(n) \overline{\chi}(n) - \mathbb{E}_{b \in \mathbb{Z}_q^\times} g(b) \overline{\chi}(b)\right|^2 \\
&\leq \left(\frac{1}{N^{1/4}}\right)^2 \frac{1}{\varphi(q)} \sum_{\chi \pmod{q}} \left|\mathbb{E}_{n \in [N]_q} f(n) \overline{\chi}(n)\right|^2 \\
&\ll \frac{1}{N^{1/2}\varphi(q)} (N+\varphi(q)) \frac{\log N}{N} \ll \frac{\log N}{N^{1/2} \varphi(q)} + \frac{\log N}{N^{3/2}}.
\end{aligned}
\end{align}
This is $O(1/q^{1+\kappa/2})$ when $N \geq q^{2/3+\kappa}$.
On the other hand, recall that by~\eqref{eq:fnsecondHM}
\[
\sum_{\chi \in \mathcal{X}_3} \left|\mathbb{E}_{n \in [N]_q} f(n) \overline{\chi}(n)\right|^2 \ll 1.
\]
Combining this with Proposition~\ref{prop:f=g+h}(ii), we obtain 
\begin{equation}
\label{eq:iiX3}
\frac{1}{\varphi(q)} \sum_{\chi \in \mathcal{X}_3} \left|\mathbb{E}_{n \in [N]_q} f(n) \overline{\chi}(n)\right|^2 \left|\mathbb{E}_{n \in [N]_q} f(n) \overline{\chi}(n) - \mathbb{E}_{b \in \mathbb{Z}_q^\times} g(b) \overline{\chi}(b)\right|^2 \ll \frac{\delta^2}{\varphi(q)}.
\end{equation}

To deal with $\mathcal{X}_2$, we recall the definition of $\mathcal{X}_{2, \delta_0}$ from~\eqref{eq:X2deldef}. If we can show that, for any $\delta_0 \in [N^{-1/4}, q^{1/4+\kappa}/N^{1/2}]$, we have
\begin{equation}
\label{eq:X2delboundbin}
|\mathcal{X}_{2, \delta_0}| \ll \frac{\delta_0^{-4}}{(\log q)^{10}}, 
\end{equation}
then we obtain using also Proposition~\ref{prop:f=g+h}(iii) that
\begin{align}
\label{eq:iiX2}
\frac{1}{\varphi(q)} \sum_{\chi \in \mathcal{X}_2} \left|\mathbb{E}_{n \in [N]_q} f(n) \overline{\chi}(n)\right|^2 \left|\mathbb{E}_{n \in [N]_q} f(n) \overline{\chi}(n) - \mathbb{E}_{b \in \mathbb{Z}_q^\times} g(b) \overline{\chi}(b)\right|^2 \ll \frac{1}{\varphi(q) (\log q)^9}.
\end{align}
Now~\eqref{eq:X2delboundbin} follows for $\delta_0 \in [N^{-1/4}, q^{1/4+\kappa}/N^{1/2}]$ and $N \geq q^{2/3+\kappa}$ from~\eqref{eq:X2delHux}. 

Combining~\eqref{eq:iiX1},~\eqref{eq:iiX3}, and~\eqref{eq:iiX2}, we obtain
\[
\frac{1}{\varphi(q)} \sum_{\chi \pmod{q}} \left|\mathbb{E}_{n \in [N]_q} f(n) \overline{\chi}(n)\right|^2 \left|\mathbb{E}_{n \in [N]_q} f(n) \overline{\chi}(n) - \mathbb{E}_{b \in \mathbb{Z}_q^\times} g(b) \overline{\chi}(b)\right|^2 \ll \frac{\delta^2}{\varphi(q)}.
\]
This together with~\eqref{eq:orthbin} and~\eqref{eq:ff-gg} implies
\begin{equation}
\label{eq:ff=gg}
\frac{1}{|[N]_q|^2} \sum_{n \equiv a \pmod{q}} (f \ast f)(n) = \frac{1}{\varphi(q)^2} (g \ast g)(a) + O\left(\frac{\delta^{1-\varepsilon/2}}{\varphi(q)}\right)
\end{equation}
for all but $O(\delta^{\varepsilon/2} \varphi(q))$ elements $a \in \mathbb{Z}_q^\times$. Now
\[
\frac{1}{\varphi(q)^2} (g \ast g)(a) + O\left(\frac{\delta^{1-\varepsilon/2}}{\varphi(q)}\right) \geq \frac{\varepsilon^2}{\varphi(q)^2} (\mathbf{1}_A \ast \mathbf{1}_A)(a) +  O\left(\frac{\delta^{1-\varepsilon/2}}{\varphi(q)}\right)
\]
and claim (ii) follows recalling the definition of $\delta$.

\textbf{Claim (iv):} Let $f_0 \colon [N]_q \to \mathbb{R}_{\geq 0}$ be defined through
\[
f_0(n) = \frac{\mathbf{1}_{n \in \mathbb{P} \cap (q^{\alpha_1-\varepsilon}, q^{\alpha_1}]}}{n}.
\]
Note that definitely $a \in E_3(N)$ if $(f \ast f \ast f_0)(n) > 0$ for some $n \equiv a \pmod{q}$. Now by orthogonality of characters and Proposition~\ref{prop:f=g+h}(iii) (abusing the notation to consider $f_0$ as a function from $\mathbb{Z}_q^\times$ when convolved with $g$),
\begin{align}
\begin{aligned}
\label{eq:ff0--ggf0}
&\frac{1}{|[N]_q|^2} \sum_{n \equiv a \pmod{q}} (f \ast f \ast f_0)(n) - \frac{1}{\varphi(q)^2} (g \ast g \ast f_0)(a) \\
&=\frac{1}{\varphi(q)} \sum_{\chi \pmod{q}} \left( \left(\mathbb{E}_{n \in [N]_q} f(n) \overline{\chi}(n)\right)^2 - \left(\mathbb{E}_{b \in \mathbb{Z}_q^\times} g(b) \overline{\chi}(b)\right)^2\right)\sum_{n \in [N]_q} f_0(n) \overline{\chi(n)} \chi(a) \\
&=O\bigg(\frac{1}{\varphi(q)} \sum_{\chi \pmod{q}} \left|\mathbb{E}_{n \in [N]_q} f(n) \overline{\chi}(n)\right| \bigg| \sum_{n \in [N]_q} f_0(n) \overline{\chi(n)}\bigg|\\
&\quad \quad \quad \quad\cdot\left|\mathbb{E}_{n \in [N]_q} f(n) \overline{\chi}(n) - \mathbb{E}_{b \in \mathbb{Z}_q^\times} g(b) \overline{\chi}(b)\right|\bigg).
\end{aligned}
\end{align}

When considering the right-hand side, we split the characters into two sets
\[
\mathcal{X}_1 = \left\{\chi \pmod{q} \colon \left|\sum_{n \in [N]_q} f_0(n) \overline{\chi}(n)\right| < (\log q)^{-10} \right\}
\]
and
\[
\mathcal{X}_2 = \left\{\chi \pmod{q} \colon \left|\sum_{n \in [N]_q} f_0(n) \overline{\chi}(n)\right| \geq (\log q)^{-10} \right\}.
\]
By the definition of $\mathcal{X}_1$, Proposition~\ref{prop:f=g+h}(iii), Lemma~\ref{le:MVT}, and~\eqref{eq:fMS}, we see that 
\begin{align}
\begin{aligned}
\label{eq:X1log}
&\frac{1}{\varphi(q)} \sum_{\chi \in \mathcal{X}_1} \left|\mathbb{E}_{n \in [N]_q} f(n) \overline{\chi}(n)\right| \left| \sum_{n \in [N]_q} f_0(n) \overline{\chi(n)}\right| \left|\mathbb{E}_{n \in [N]_q} f(n) \overline{\chi}(n) - \mathbb{E}_{b \in \mathbb{Z}_q^\times} g(b) \overline{\chi}(b)\right| \\
&\ll (\log q)^{-10} \frac{1}{\varphi(q)} \sum_{\chi \pmod{q}} \left|\mathbb{E}_{n \in [N]_q} f(n) \overline{\chi}(n)\right|^2 \\
&\ll   (\log q)^{-10} \frac{1}{\varphi(q)} (N+\varphi(q)) \frac{\log N}{N} \ll \frac{1}{\varphi(q) (\log q)^{9}}.
\end{aligned}
\end{align}
Recall that $L = \lfloor 1/\alpha_1 \rfloor$. By Proposition~\ref{prop:f=g+h}(ii,iii) and H\"older's inequality, we obtain 
\begin{align}
\begin{aligned}
\label{eq:X2f0}
&\frac{1}{\varphi(q)} \sum_{\chi \in \mathcal{X}_2} \left|\mathbb{E}_{n \in [N]_q} f(n) \overline{\chi}(n)\right|  \left|\sum_{n \in [N]_q} f_0(n) \overline{\chi}(n)\right| \left|\mathbb{E}_{n \in [N]_q} f(n) \overline{\chi}(n) - \mathbb{E}_{b \in \mathbb{Z}_q^\times} g(b) \overline{\chi}(b)\right| \\
&\ll \delta^{1/L} \frac{1}{\varphi(q)} \left(\sum_{\chi \in \mathcal{X}_2} \left|\sum_{n \in [N]_q} f_0(n) \overline{\chi}(n)\right|^{2L}\right)^{\frac{1}{2L}}  \left(\sum_{\chi \in \mathcal{X}_2}\left|\mathbb{E}_{n \in [N]_q} f(n) \overline{\chi}(n)\right|^2\right)^{1-\frac{1}{2L}}.
\end{aligned}
\end{align}
Recall we assume that $\varepsilon$ is sufficiently small. Then $[L(\alpha_1-\varepsilon), L\alpha_1] \subseteq [3/8, 1]$. We obtain by the definition of $\mathcal{X}_2$, Lemma~\ref{le:Hal-Mon}(ii), and Mertens' theorem that
\begin{align*}
\left((\log q)^{-10}\right)^{2L} |\mathcal{X}_2| &\leq  \sum_{\chi \in \mathcal{X}_2} \left|\sum_{n \in [N]_q} f_0(n) \overline{\chi}(n)\right|^{2L}\\
&= \sum_{\chi \in \mathcal{X}_2} \left|\sum_{q^{L(\alpha_1-\varepsilon)} < n \leq q^{L\alpha_1}} \left(\sum_{\substack{n = p_1 \dotsm p_L \\ q^{\alpha_1-\varepsilon} < p_1,\ldots,p_L \leq q^{\alpha_1}}} 1\right)\frac{\overline{\chi}(n)}{n} \right|^2 \\
&\leq \left(1 + q^{1/9+3\varepsilon-1/8}|\mathcal{X}_2|\right) \sum_{\substack{n \leq q \\ (n, P(q^{\alpha_1-\varepsilon})) = 1}} \frac{\tau_L(n)^2}{n} \\
& \ll (1+q^{-1/100}|\mathcal{X}_2|) \prod_{q^{\alpha_1-\varepsilon} \leq p \leq q} \left(1+\frac{L^2}{p}\right) \ll 1+q^{-1/100}|\mathcal{X}_2|.
\end{align*}
Now the second term cannot dominate, and so $|\mathcal{X}_2| \ll (\log q)^{20L}$ and
\begin{equation*}
\sum_{\chi \in \mathcal{X}_2} \left|\sum_{n \in [N]_q} f_0(n) \overline{\chi}(n)\right|^{2L} \ll 1.
\end{equation*}

Applying also Lemma~\ref{le:Hal-Mon}(i), we see that~\eqref{eq:X2f0} is $\ll \delta^{\frac{1}{L}}/\varphi(q)$. Combining this with~\eqref{eq:X1log} we obtain from~\eqref{eq:ff0--ggf0} that
\begin{equation}
\label{eq:fff0=ggf0delta}
\frac{1}{|[N]_q|^2} \sum_{n \equiv a \pmod{q}} (f \ast f \ast f_0)(n) = \frac{1}{\varphi(q)^2} (g \ast g \ast f_0)(a) + O\left(\frac{\delta^{\frac{1}{L}}}{\varphi(q)}\right),
\end{equation}
where we still have abused the notation by considering on the right-hand side $f_0$ as a function from $\mathbb{Z}_q^\times$.

By the definitions of $f$ and $f_0$ and~\eqref{eq:fff0=ggf0delta} we obtain that $a \in E_3(N)$ if
\[
(g \ast g \ast f_0)(a) \gg \varphi(q) \delta^{1/L-\varepsilon/4},
\]
and the claim follows by the definitions of $A$ and $\delta$.

\begin{remark}
The reason we needed to use logarithmic averaging in Proposition~\ref{prop:transfer}(iv) is that when $\alpha_1 < 1/3+\varepsilon$, we cannot apply Hal\'asz--Montgomery type results without losses without logarithmic averaging. In this range, in order to make the Burgess bound applicable, we need to take higher than second moment of the character sum, and we cannot use a sieve to find a correct order of magnitude sieve majorant for example for the characteristic function of integers of the form $n = p_1 p_2$ with $p_j \sim q^{\alpha_1}$.
\end{remark}

\textbf{Claim (v):} Let $H \leq \mathbb{Z}_q^\times$ be a subgroup of index $Y < \varepsilon^{-1/2}$, and let $b_1H, \dotsc, b_KH$ be all the distinct cosets of $H$ from which $A$ contains at least $\varepsilon \varphi(q)$ elements, and let $c_1 H, \dotsc, c_{Y-K} H$ be the remaining distinct cosets. Thanks to Proposition~\ref{prop:f=g+h}(v), the definition of $f$, and Lemma~\ref{le:P2}, we have, for any $j \in \{1, \dotsc, K\}$
\begin{align}
\begin{aligned}
\label{eq:gf}
\frac{1}{\varphi(q)} \sum_{a \in b_j H} g(a) &= \mathbb{E}_{n \in [N]_q} \mathbf{1}_{n \in b_j H} f(n) + O(\delta) \\
&= \frac{\theta}{2} \cdot \frac{\varphi(q)}{q} \log N \frac{1}{|[N]_q|} \sum_{\substack{z \leq p \leq N \\ p \in b_jH}} 1 + O(\delta) \leq (1+o(1)) \frac{\theta}{Y \theta_0},
\end{aligned}
\end{align}
recalling that $\delta=o(1)$. On the other hand, for any $j \in \{1, \dotsc, Y-K\}$ we have by Proposition~\ref{prop:f=g+h}(i) and~\eqref{eq:Adef} (using also $\varepsilon\leq \varepsilon^{1/2}/Y$)
\begin{align*}
\frac{1}{\varphi(q)} \sum_{a \in c_j H} g(a) &= \frac{1}{\varphi(q)} \sum_{a \in A \cap c_jH} g(a) + \frac{1}{\varphi(q)} \sum_{a \in A^c \cap c_j H} g(a) \\
& \leq \frac{1}{\varphi(q)} \cdot \varepsilon \varphi(q) \cdot \left(1+\frac{\varepsilon}{10}\right) + \frac{1}{\varphi(q)} \cdot \frac{\varphi(q)}{Y} \cdot \frac{\varepsilon}{10} \leq \frac{2\varepsilon^{1/2}}{Y}.
\end{align*}
Combining these with~\eqref{eq:fgav}, we obtain that
\[
\frac{\theta}{2} \leq (1+o(1))K \frac{\theta}{Y \theta_0} + 2\varepsilon^{1/2} \varphi(q) + o(1),
\] 
and the claim follows.

\textbf{Claim (vi):} The definition of $f$, Proposition~\ref{prop:f=g+h}(v, i), and the definition of $A$ imply that
\begin{align*}
\frac{\theta}{2} \frac{|E_1(N) \cap bH|}{N/\log N} &= \frac{1}{\varphi(q)} \sum_{a \in bH} g(a) + o(1) = \frac{1}{\varphi(q)} \sum_{a \in A \cap bH} g(a) + \frac{1}{\varphi(q)} \sum_{a \in A^c \cap bH} g(a) + o(1) \\
&\leq \left(1+\frac{\varepsilon}{20}\right)\frac{|A \cap bH|}{\varphi(q)} + \frac{\varepsilon}{10Y} +o(1)\leq \frac{|A \cap bH|}{\varphi(q)} + \frac{\varepsilon}{5Y},
\end{align*}
and~\eqref{eq:A'binprimes1} follows.
\end{proof}

\section{Products sets}

\subsection{Reduction from popular products to product sets}

In the notation of Proposition~\ref{prop:transfer}, we have reduced the study of $E_3(q)$ to getting a lower bound of the form $(\mathbf{1}_{A}*\mathbf{1}_{A}*\mathbf{1}_{A})(a) \gg \varphi(q)^2$ for all $a\in \mathbb{Z}_q^{\times}$. If instead it sufficed just to show that $a \in A \cdot A \cdot A$, we could apply Kneser's theorem (Lemma~\ref{le:Kneser}). However, we need that $a$ has many representations as a product of three elements of $A$. Fortunately, the following lemma which is a quick consequence of work of Grynkiewicz~\cite{Grynkiewicz} allows us to essentially reduce to studying $A \cdot A \cdot A$.

\begin{lemma}
\label{le:popularkneser}
Let $t\geq u\geq 1$ be integers. Let $A,B$ be finite subsets of a finite abelian group $G$ with $|A|,|B|\geq t$. Then at least one of the following holds.
\begin{enumerate}[(a)]
\item We have \begin{align*}
(\mathbf{1}_{A}*\mathbf{1}_{B})(a)\geq u    
\end{align*}
for at least
\begin{align*}
|A|+|B| - 2 t -\frac{u|G|}{t}   \end{align*}
values of $a\in G$.
\item There exist subsets $A'\subseteq A, B'\subseteq B$ with
\begin{align*}
|A\setminus A'|+|B\setminus B'|\leq t-1    
\end{align*}
such that  
\[
(\mathbf{1}_{A}*\mathbf{1}_{B})(a)\geq t
\]
for every $a \in A' \cdot B'$.
\end{enumerate}
\end{lemma}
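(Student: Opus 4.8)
The plan is to deduce Lemma~\ref{le:popularkneser} from the Kneser-type statement about ``popular sums'' contained in Grynkiewicz's work~\cite{Grynkiewicz}. The rough idea is as follows. Fix the parameters $t\geq u\geq 1$. We look at the sublevel set of the convolution,
\[
S_u \coloneqq \{a\in G\colon (\mathbf{1}_A*\mathbf{1}_B)(a)\geq u\},
\]
and more importantly at $S_t$. If $|S_t|$ is already large, we aim for conclusion~(a) (since $u\le t$ implies $S_t\subseteq S_u$), and if $|S_t|$ is small, we aim for conclusion~(b) by showing that almost all of $A$ and all of $B$ must be concentrated so that their productset lies inside $S_t$. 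The key external input is a statement of the following shape (a ``popular sumset'' or ``$\nu$-fold sumset'' Kneser theorem): if $S_t=\{a\colon(\mathbf 1_A*\mathbf 1_B)(a)\ge t\}$ has stabilizer $K$, then $|S_t|\ge |A+K|+|B+K|-t\cdot(\text{something involving }|K|)$, unless one can pass to large subsets $A'\subseteq A$, $B'\subseteq B$ with controlled deficiency whose productset is contained in $S_t$. Grynkiewicz's refinement of Kneser's theorem gives exactly such a dichotomy with explicit constants.

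Concretely, first I would recall the relevant theorem from~\cite{Grynkiewicz}: for finite subsets $A,B$ of an abelian group and an integer parameter, one has a lower bound on the number of elements representable in at least that many ways, with the usual ``stabilizer'' exceptional case. Applying it with the parameter $t$ produces either the lower bound $|S_t|\geq |A|+|B|-2t-\tfrac{t|G|}{t\cdot(\dots)}$ — which after bookkeeping is at least $|A|+|B|-2t-\tfrac{u|G|}{t}$ since $u\le t$, giving~(a) via $S_t\subseteq S_u$ — or else it places us in the structured case. In the structured case one extracts the subgroup $H\le G$ (the stabilizer of $S_t$, or of a suitable productset) together with subsets $A'\subseteq A$, $B'\subseteq B$; the point is that $A'\cdot B'$ is then a union of $H$-cosets on each of which $(\mathbf 1_A*\mathbf 1_B)$ is large, because once a coset $cH$ meets $A'\cdot B'$, the number of representations $ab=a'$ for $a'\in cH$ counts (roughly) $|A'\cap xH|\cdot|B'\cap yH|$ summed appropriately, which one arranges to be $\geq t$. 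Tracking the sizes of $A\setminus A'$ and $B\setminus B'$ through Grynkiewicz's statement yields the bound $|A\setminus A'|+|B\setminus B'|\le t-1$, giving~(b).

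The two routine-but-necessary steps are: (1) matching the precise numerology of Grynkiewicz's theorem to the bound $|A|+|B|-2t-\tfrac{u|G|}{t}$ in~(a) — here one uses $|A|,|B|\ge t$ to absorb error terms and $u\le t$ to relax from a $t$-dependent to a $u$-dependent main term; and (2) verifying the representation-count lower bound $(\mathbf 1_A*\mathbf 1_B)(a)\ge t$ for $a\in A'\cdot B'$ in case~(b), which is where one must be careful that ``$a\in A'\cdot B'$'' really forces $t$ representations and not merely one. I expect step~(2) to be the main obstacle: translating Grynkiewicz's ``each element of the relevant productset has many representations in the original sets $A,B$'' into exactly the clean assertion $(\mathbf 1_A*\mathbf 1_B)(a)\ge t$ for all $a\in A'\cdot B'$ requires choosing $A',B'$ as the right level sets (e.g.\ the union of those $H$-cosets of $A$, resp.\ $B$, that are ``heavy''), and then a short counting argument using that a heavy coset of $A$ times a heavy coset of $B$ already contributes $\ge t$ representations to each element of the product coset. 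The rest is bookkeeping with the inequalities $|A|,|B|\ge t\ge u\ge 1$.
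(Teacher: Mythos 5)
Your overall strategy---reduce everything to Grynkiewicz's popular-sumset refinement of Kneser's theorem---is exactly the paper's, but the way you propose to extract conclusion (a) from it relies on a misstatement of what that theorem provides, and this is where the argument as written would fail. Grynkiewicz's Theorem 1.1 does not give a direct lower bound on $|N_t(A,B)|=|S_t|$ of the form $|A|+|B|-2t-(\cdots)$; it gives a lower bound on the \emph{sum} $\sum_{i=1}^{t}|N_i(A,B)|\geq t(|A|+|B|-2t)+1$ in the non-structured case (indeed, a direct Kneser-strength bound on $|N_t|$ alone is false in general without the structured exception, which is precisely why the theorem is phrased via this sum). Consequently your route ``$|S_t|$ is large, hence $|S_u|\supseteq|S_t|$ is large'' has nothing to start from, and your proposed bookkeeping ``relax a $t$-dependent main term to a $u$-dependent one since $u\leq t$'' does not produce the $u|G|/t$ loss in the statement. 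The actual mechanism is: discard the terms $i=1,\dotsc,u-1$ from the sum, each of which is trivially at most $|G|$, to get $\sum_{i=u}^{t}|N_i|\geq t(|A|+|B|-2t)-u|G|$; then, since the sets $N_i$ are nested and there are at most $t$ surviving terms, each bounded by $|N_u|$, divide by $t$ to conclude $|N_u|\geq |A|+|B|-2t-u|G|/t$. This two-line averaging step is the only real content of the deduction, and it is absent from your sketch.

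On the other hand, the difficulty you anticipate in step (2) is not there: in the structured alternative of Grynkiewicz's theorem the subsets $A'\subseteq A$, $B'\subseteq B$ with $|A\setminus A'|+|B\setminus B'|\leq t-1$ come equipped with the property that every element of $A'\cdot B'$ already lies in $N_t(A,B)$, i.e.\ has at least $t$ representations as a product from $A\times B$. So conclusion (b) is a verbatim restatement of that alternative, and no auxiliary construction of ``heavy cosets'' or secondary counting argument is needed. In short: right source and right dichotomy, but the quantitative passage to (a) needs to be replaced by the averaging argument above, and (b) is free.
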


\begin{proof}
Let $N_i(A,B)$ denote the set of $c \in G$ such that $c$ has at least $i$ representations as $ab$ with $a\in A, b\in B$. Then by~\cite[Theorem 1.1]{Grynkiewicz} either (b) holds, or we have
 \begin{align}\label{eq:Ni}
\sum_{i=1}^t|N_i(A,B)|\geq t(|A|+|B|-2t)+1.
 \end{align}
If~\eqref{eq:Ni} holds, then we trivially also have
 \begin{align*}
\sum_{i=u}^t|N_i(A,B)|\geq t(|A|+|B|-2t)- u |G|.     
 \end{align*}
 Since $|N_u(A,B)|\geq |N_t(A,B)|$ for $t\geq u$, we conclude that
 \begin{align*}
|N_u(A,B)|\geq \frac{1}{t} \sum_{i = u}^t |N_i(A, B)| \geq |A|+|B|-2t-\frac{u|G|}{t}.     
 \end{align*}
The claim (a) follows, since for each $a\in N_u(A,B)$ we have $(\mathbf{1}_{A}*\mathbf{1}_{B})(a)\geq u$.
\end{proof}

Next we combine the previous lemma with Proposition~\ref{prop:transfer} to obtain criteria about when $a \in E_j(N)$.
\begin{proposition}[Criteria on sizes of $E_2(N)$ and $E_3(N)$]
\label{prop:A'binter}
Let $\kappa, \varepsilon>0$ and $C \geq 1$ be fixed. Let $q \in \mathbb{N}$ be large in terms of $\kappa, \varepsilon$, and let $N \in [q^{2/3+\kappa}, q^C]$. Let $\theta$ and $\theta_0$ be as in~\eqref{eq:thetaDef} and~\eqref{eq:theta0Def}. Then at least one of the following holds:
\begin{enumerate}[(a)]
\item We have $|E_2(N)| \geq (\theta - 2\varepsilon) \varphi(q)$. If $\theta > 2/3+3\varepsilon$, then we also have $E_3(N) = \mathbb{Z}_q^\times$. 
\item There exist sets $A', B' \subseteq \mathbb{Z}_q^\times$ such that the following hold. 
\begin{itemize}
\item[(b.i)]
\[
|A'|, |B'| \geq (\theta/2-3\varepsilon/2)\varphi(q).
\]
\item[(b.ii)]
\[
|(A' \cdot B') \cap E_2(N)| \geq |A' \cdot B'|- \varepsilon \varphi(q),
\]
\item[(b.iii)]
If $N \geq q^{4/5+\kappa}$, then
\[
(\mathbf{1}_{A' \cdot B'} \ast \mathbf{1}_{A'})(a) \gg \varphi(q) \implies a \in E_3(N).
\]
\item[(b.iv)] Assume that $N \geq q$ and let $\alpha_1 \in (2\varepsilon, 1]$. Write $L = \lfloor 1/\alpha_1\rfloor$. Then
\[
\sum_{\substack{cp \equiv a \pmod{q} \\ q^{\alpha_1-\varepsilon} < p \leq q^{\alpha_1} \\ c \in A' \cdot B'}} \frac{1}{p} \gg 1 \implies a \in E_3(N).
\]
\item[(b.v)]
For any subgroup $H \leq \mathbb{Z}_q^\times$ of index $Y \leq \varepsilon^{-1/2}$, $A' \cap B'$ contains at least $\frac{\varepsilon}{2} \varphi(q)$ elements from at least $\lceil (\theta_0/2-3\varepsilon^{1/2}\theta_0/\theta)Y\rceil$ cosets of $H$. 
\item[(b.vi)] For any $b \in \mathbb{Z}_q^\times$, we have
\[
|A'\cap B' \cap bH|\geq \left(\frac{\theta}{2} \cdot \frac{|E_1(N)\cap bH|}{N/\log N} - \frac{\varepsilon}{3}\right)\varphi(q).
\]
\end{itemize}
\end{enumerate}
\end{proposition}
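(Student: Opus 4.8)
The strategy is to feed the set $A$ produced by Proposition~\ref{prop:transfer} into the popular‑sums version of Kneser's theorem (Lemma~\ref{le:popularkneser}), and to read off its two alternatives as the alternatives (a), (b) in the statement. Concretely, apply Proposition~\ref{prop:transfer} with $\varepsilon$ replaced by a suitable $\varepsilon_1\in(\varepsilon/2,\varepsilon)$ (e.g.\ $\varepsilon_1=2\varepsilon/3$), keeping $\kappa,C,N$ fixed, to obtain $A\subseteq\Z_q^\times$ satisfying (i)--(vi); note that $|A|\gg\varphi(q)$ since $\theta\ge 1/2-\varepsilon$ in every case. Then apply Lemma~\ref{le:popularkneser} to the pair $(A,A)$ with $t=\lceil\varepsilon\varphi(q)/15\rceil$ and $u=\lceil\varphi(q)(\log q)^{-1/2+\varepsilon}\rceil$; for $q$ large one has $|A|\ge t\ge u\ge 1$, $2t+u\varphi(q)/t=o(\varphi(q))$, and $u$ exceeds by a factor $(\log q)^{\varepsilon-\varepsilon_1}$ the thresholds $\asymp\varphi(q)(\log q)^{-1/2+\varepsilon_1}$ appearing in Proposition~\ref{prop:transfer}(ii)--(iv).

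Suppose first we are in alternative (a) of Lemma~\ref{le:popularkneser}. Then $(\mathbf{1}_A*\mathbf{1}_A)(a)\ge u$ for every $a$ in a set $S$ with $|S|\ge 2|A|-2t-u\varphi(q)/t\ge(\theta-2\varepsilon_1-o(1))\varphi(q)$. Since $u$ beats the threshold in Proposition~\ref{prop:transfer}(ii), all but $O(\varphi(q)(\log q)^{-\varepsilon_1/5})$ of these $a$ lie in $E_2(N)$, so $|E_2(N)|\ge(\theta-2\varepsilon)\varphi(q)$ for $q$ large. If in addition $\theta>2/3+3\varepsilon$, then $|S|+|A|-\varphi(q)\ge(3\theta/2-1-O(\varepsilon))\varphi(q)\gg\varepsilon\varphi(q)$, so Lemma~\ref{le:convolution}(i) gives $(\mathbf{1}_S*\mathbf{1}_A)(a)\gg\varepsilon\varphi(q)$ for every $a$; since $\mathbf{1}_A*\mathbf{1}_A\ge u\,\mathbf{1}_S$ pointwise, $(\mathbf{1}_A*\mathbf{1}_A*\mathbf{1}_A)(a)\ge u\,(\mathbf{1}_S*\mathbf{1}_A)(a)\gg\varepsilon u\varphi(q)\gg\varphi(q)^2(\log q)^{-1/2+\varepsilon_1}$, and Proposition~\ref{prop:transfer}(iii) gives $E_3(N)=\Z_q^\times$. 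This yields alternative (a).

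Suppose instead we are in alternative (b) of Lemma~\ref{le:popularkneser}, producing $A',B'\subseteq A$ with $|A\setminus A'|+|A\setminus B'|\le t-1$ and $(\mathbf{1}_A*\mathbf{1}_A)(c)\ge t$ for all $c\in A'\cdot B'$; take these for (b). Then (b.i) is $|A'|,|B'|\ge|A|-(t-1)\ge(\theta/2-3\varepsilon/2)\varphi(q)$ by Proposition~\ref{prop:transfer}(i), and (b.ii) holds because each $c\in A'\cdot B'$ satisfies $(\mathbf{1}_A*\mathbf{1}_A)(c)\ge t$, which beats the threshold of Proposition~\ref{prop:transfer}(ii), so all but $O(\varphi(q)(\log q)^{-\varepsilon_1/5})$ of them lie in $E_2(N)$. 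For (b.iii), (b.iv) one uses $(\mathbf{1}_A*\mathbf{1}_A)\ge t$ on $A'\cdot B'$ to bound $(\mathbf{1}_A*\mathbf{1}_A*\mathbf{1}_A)(a)\ge t\,(\mathbf{1}_{A'\cdot B'}*\mathbf{1}_{A'})(a)$ and, for $\alpha_1\in(2\varepsilon,1]$,
\[
\sum_{\substack{a=pa_1a_2\\ q^{\alpha_1-\varepsilon}<p\le q^{\alpha_1}\\ a_1,a_2\in A}}\frac{1}{p}\ \ge\ t\sum_{\substack{cp\equiv a\Mod q\\ q^{\alpha_1-\varepsilon}<p\le q^{\alpha_1}\\ c\in A'\cdot B'}}\frac{1}{p};
\]
since $t\gg\varphi(q)(\log q)^{-1/2}$ dominates the thresholds of Proposition~\ref{prop:transfer}(iii),(iv) --- for (b.iv) after splitting $(q^{\alpha_1-\varepsilon},q^{\alpha_1}]$ into $O(1)$ subranges of the form $(q^{\beta-\varepsilon_1},q^{\beta}]$ --- these parts follow. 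Finally $A'\cap B'\subseteq A$ with $|A\setminus(A'\cap B')|\le t-1<\varepsilon\varphi(q)/15$, so (b.v), (b.vi) follow from Proposition~\ref{prop:transfer}(v),(vi) applied to $A$, the loss of fewer than $\varepsilon\varphi(q)/15$ elements being absorbed into the $\varepsilon/2$ and $\varepsilon/3$ slacks there.

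The step I expect to be most delicate is the conclusion $E_3(N)=\Z_q^\times$ in (a) (and, similarly, (b.iii)): it passes through Proposition~\ref{prop:transfer}(iii), which requires $N\ge q^{4/5+\kappa}$, whereas the hypothesis $\theta>2/3+3\varepsilon$ (for cube‑free $q$) only forces $N>q^{3/4+O(\varepsilon)}$. Hence the residual range $q^{3/4+O(\varepsilon)}<N<q^{4/5+\kappa}$ requires separate handling: there one would instead exploit that $|E_2(N)|>(2/3)\varphi(q)$ has already been established, or refine the treatment of the ``small Fourier coefficients'' in the proof of Proposition~\ref{prop:transfer}(iii) in that range. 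The remaining issues --- keeping track of the several copies of $\varepsilon$ and matching prime ranges in (b.iv) --- are routine bookkeeping.
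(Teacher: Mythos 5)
Your proposal is correct and follows essentially the same route as the paper: feed the set $A$ from Proposition~\ref{prop:transfer} into Lemma~\ref{le:popularkneser} applied to the pair $(A,A)$, read off its two alternatives as (a) and (b), and transfer the popularity thresholds through the pointwise inequalities $(\mathbf{1}_A*\mathbf{1}_A*\mathbf{1}_A)\geq t\,(\mathbf{1}_{A'\cdot B'}*\mathbf{1}_{A'})$, etc.; the paper merely takes $u\asymp\varepsilon^2\varphi(q)$ rather than your near-threshold choice and runs everything with a single $\varepsilon$. As for the delicacy you flag: the paper's own proof of the second sentence of (a) likewise routes through Proposition~\ref{prop:transfer}(iii) and hence implicitly needs $N\geq q^{4/5+\kappa}$, so the residual range $q^{3/4+O(\varepsilon)}<N<q^{4/5+\kappa}$ for cube-free $q$ is equally untreated there; this is immaterial since the proposition is only ever invoked with $N\geq q$.
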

Proposition~\ref{prop:A'binter-vii} below adds one more conclusion to part (b) of this proposition.

\begin{proof}[Proof of Proposition~\ref{prop:A'binter}]
We may assume that $\varepsilon>0$ is small enough. Let $A$ be as in Proposition~\ref{prop:transfer}.  Note that $|A| \geq (\theta/2-\varepsilon)\varphi(q)$. We apply Lemma~\ref{le:popularkneser} with $t = \lceil \varepsilon \varphi(q)/10\rceil$ and $u = \lfloor \varepsilon^2 \varphi(q)/1000\rfloor$, and with $A$ and $B$ in the lemma both equalling the set $A$.

If Lemma~\ref{le:popularkneser}(a) holds, then the first part of Proposition~\ref{prop:A'binter}(a) holds by Proposition~\ref{prop:transfer}(ii) and the second part of Proposition~\ref{prop:A'binter}(a) holds by Proposition~\ref{prop:transfer}(iii) and Lemma~\ref{le:convolution}(i) applied with $\{a \in \mathbb{Z}_q^\times \colon (\mathbf{1}_{A} \ast \mathbf{1}_{A})(a) \geq u\}$ in place of $A$ and $A$ in place of $B$.

Let us now show that if Lemma~\ref{le:popularkneser}(b) holds then (b.i)--(b.vi) hold. Now (b.i) follows immediately and (b.ii) holds by~Proposition~\ref{prop:transfer}(ii) since $A', B' \subseteq A$ and
\begin{align}
\label{eq:cA'B'}
c \in A' \cdot B'\implies \mathbf{1}_A \ast \mathbf{1}_A(c) \geq \varepsilon \varphi(q)/10.    
\end{align}
Moreover, we have (b.iii) by Proposition~\ref{prop:transfer}(iii) and the estimate
\begin{align*}
(\mathbf{1}_{A' \cdot B'}*\mathbf{1}_{A'})(a)&=\sum_{c\in A' \cdot B'}\mathbf{1}_{A' \cdot B'}(c)\mathbf{1}_{A'}(ac^{-1})\ll \frac{1}{\varphi(q)}\sum_{c\in A' \cdot B'}(\mathbf{1}_{A}*\mathbf{1}_{A})(c)\mathbf{1}_{A'}(ac^{-1}) \\
&\leq \frac{\mathbf{1}_{A}*\mathbf{1}_{A}*\mathbf{1}_{A}(a)}{\varphi(q)}.    
\end{align*}
Claim (b.iv) follows similarly from Proposition~\ref{prop:transfer}(iv).

Since 
\begin{equation}
\label{eq:Aa'minus}
|A \setminus A'| + |A \setminus B'| \leq \frac{\varepsilon}{10}\varphi(q),
\end{equation}
Claim (b.v) follows from Proposition~\ref{prop:transfer}(v) and (b.vi) follows from Proposition~\ref{prop:transfer}(vi).
\end{proof}

\subsection{Structure of sets with small doubling}
In this subsection we provide a consequence of Kneser's theorem which tells us about the structure of $A$ and $B$ in case $A\cdot B$ is small. The following lemma is quite similar to parts of~\cite[Section 4]{BRS} and~\cite[Lemma 3]{szabo}.
\begin{lemma}
\label{le:structure}
Let $\alpha, \alpha', \beta \in (0, 1]$ be such that $\beta < 2\alpha\leq 2\alpha'$,  and let $A, B \subseteq \mathbb{Z}_q^\times$ with $|A|, |B| \geq \alpha \varphi(q)$. Assume that $A$ and $B$ each meet at least proportion $\alpha'$ of cosets of any subgroup $H_0 \leq \mathbb{Z}_q^\times$ of order $<1/(2\alpha-\beta)$. Then at least one of the following holds.
\begin{enumerate}[(a)]
\item We have 
\[
|A \cdot B| \geq \beta \varphi(q).
\]
\item Write $H \leq \mathbb{Z}_q^\times$ for the stabilizer of $A \cdot B$ and write $Y$ for its index. Then the following hold.
\begin{itemize}
    \item[(b.i)] We have
\[
1 < Y < \frac{1}{2\alpha' - \beta}.
\]
    \item[(b.ii)] If $\alpha' > 1/3$ and $\beta \leq 2/3$, then $Y = 3k + 2$ for some integer $k$ with 
\begin{align}\label{eq:alpha'}
\frac{k+1}{3k+2}\geq \alpha',
\end{align}
and each of $A$ and $B$ intersects exactly $k+1$ cosets of $H$, and $A \cdot B$ is the union of $2k+1$ cosets $\pmod{H}$. 

\item[(b.iii)] If, for some small enough $\varepsilon>0$, we have $\alpha' \geq \frac{3}{8}-\varepsilon$ and $\beta< \frac{11}{16}-2\varepsilon$, then $Y = 3k + 2$ for some integer $k\in \{0,1,2\}$, and each of $A$ and $B$ intersects exactly $k+1$ cosets of $H$, and $A \cdot B$ is the union of $2k+1$ cosets $\pmod{H}$. 
\end{itemize}
\end{enumerate}
\end{lemma}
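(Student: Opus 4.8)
The strategy is to feed the sets $A$ and $B$ into Kneser's theorem (Lemma~\ref{le:Kneser}) and extract structural information from the hypothesis that $A\cdot B$ is small. First I would assume that (a) fails, so $|A\cdot B|<\beta\varphi(q)$, and let $H$ be the stabilizer of $A\cdot B$ with index $Y$. Kneser's theorem gives
\[
\beta\varphi(q) > |A\cdot B| \geq |A\cdot H| + |B\cdot H| - |H| \geq |A| + |B| - |H| \geq 2\alpha\varphi(q) - \varphi(q)/Y,
\]
so $1/Y > 2\alpha - \beta > 0$, which immediately forces $Y$ finite; and since $A\cdot H$ and $B\cdot H$ are unions of cosets of $H$ and $A\cdot B$ is a proper subset of $\mathbb{Z}_q^\times$ (as $\beta\le 1$ and one checks it cannot be everything when $2\alpha>\beta$), we must have $Y>1$. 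This establishes (b.i), after noting that the coset-meeting hypothesis applied to the subgroup $H$ (whose index $Y$ satisfies $Y<1/(2\alpha-\beta)$, hence order $<1/(2\alpha-\beta)$ in the relevant normalization — here one should be slightly careful about whether "order" or "index" is meant, but the bound $Y < 1/(2\alpha'-\beta)$ is what actually comes out) gives the sharper $2\alpha'\varphi(q) \le |A\cdot H| + |B\cdot H|$, improving $2\alpha$ to $2\alpha'$ in the displayed chain.

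For (b.ii), write $a$ and $b$ for the number of cosets of $H$ met by $A$ and $B$ respectively, and $c$ for the number of cosets in $A\cdot B$; then $a,b \ge \alpha' Y$ and $c < \beta Y \le \tfrac23 Y$, while the Cauchy--Davenport/Kneser bound in the quotient group $\mathbb{Z}_q^\times / H$ (which has trivial stabilizer for the image of $A\cdot B$ by construction of $H$) gives $c \ge a + b - 1 \ge 2\alpha' Y - 1 > \tfrac{2}{3}Y - 1$. Combining $\tfrac23 Y - 1 < c < \tfrac23 Y$ and $c$ an integer pins down $c$ and forces $3 \mid (\text{something})$; carefully, since $c\ge a+b-1$ and $a+b \ge 2\alpha'Y$, and $c<\beta Y$, one gets $a+b-1 \le c \le \lceil \beta Y\rceil - 1$, and the inequality $2\alpha'Y \le a+b \le c+1$ together with $\alpha'>1/3$ squeezes everything so that $a=b=k+1$ and $c = 2k+1$ where $Y = 3k+2$; the divisibility $Y\equiv 2\pmod 3$ drops out of the arithmetic, and~\eqref{eq:alpha'} is just $\alpha' \le (k+1)/(3k+2) = a/Y$. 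The key point making this rigid is that $\alpha'>1/3$ leaves essentially no slack: $a+b > \tfrac23 Y$ forces $c = a+b-1$ exactly (no room for $c$ to be larger), which is the equality case, and Kneser's equality case forces $A$ and $B$ to be unions of full $H$-cosets in the quotient.

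For (b.iii), the plan is to run the same counting but now allowing $\beta$ up to just below $11/16$ and $\alpha'$ down to $3/8-\varepsilon$. One again has $Y < 1/(2\alpha'-\beta)$; with $\alpha' \ge 3/8-\varepsilon$ and $\beta < 11/16-2\varepsilon$ we get $2\alpha'-\beta > 3/4 - 2\varepsilon - 11/16 + 2\varepsilon = 1/16$, hence $Y < 16$, a bounded range. Then one checks case by case over $Y \in \{2,3,\dots,15\}$: from $c \ge a+b-1 \ge 2\alpha'Y - 1$ and $c < \beta Y$ one needs $2\alpha'Y - 1 < \beta Y$, i.e. $(2\alpha'-\beta)Y < 1$, which with the given $\alpha',\beta$ excludes all $Y$ except those congruent to $2 \bmod 3$ and small; a direct check (using that $\alpha' \ge 3/8-\varepsilon$ and integrality) rules out $Y = 8, 11, 14, \ldots$ and leaves $Y \in \{2, 5, 8\}$, i.e. $k \in \{0,1,2\}$, and in each surviving case the same squeeze as in (b.ii) forces $a = b = k+1$, $c = 2k+1$. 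The main obstacle I anticipate is (b.iii): one must verify that for each candidate $Y$ not of the form $3k+2$ with $k\le 2$, the inequalities $2\alpha'Y - 1 \le c \le \lceil\beta Y\rceil - 1$ combined with the constraint $c\equiv$ (appropriate residue) and the integrality of $a,b$ actually yield a contradiction — this is a finite but slightly fiddly case analysis where the exact constants $3/8$ and $11/16$ were presumably reverse-engineered to make it work, and getting the boundary cases (and the role of $\varepsilon$) right is where care is needed.
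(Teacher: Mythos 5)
Your proposal is correct and follows essentially the same route as the paper: assume (a) fails, apply Kneser's theorem to bound the index $Y$ of the stabilizer, upgrade $\alpha$ to $\alpha'$ via the coset-meeting hypothesis, and then use integrality of the coset counts --- specifically $a,b\ge\lceil\alpha' Y\rceil$ \emph{individually} (the displayed squeeze $\tfrac23 Y-1<c<\tfrac23 Y$ alone would not exclude $Y\equiv 1\pmod 3$, but your subsequent ``careful'' version does) --- to force $Y=3k+2$, $a=b=k+1$, $c=2k+1$, together with the finite case check over $Y<16$ for (b.iii), exactly as in the paper's proof. The only slip is notational: $Y=8$ belongs to the surviving set $\{2,5,8\}$ rather than to the excluded values $11,14$.
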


\begin{proof}
If $Y=1$, then $A \cdot B = \mathbb{Z}_q^\times$ and (a) holds. Assume henceforth that $Y>1$. By Kneser's theorem (Lemma~\ref{le:Kneser})
\[
|A \cdot B| \geq |A \cdot H| + |B \cdot H| - |H| \geq |A| + |B| - |H| \geq \left(2\alpha - \frac{1}{Y}\right) \varphi(q).
\]
We see that either (a) holds or
\[
1 < Y < \frac{1}{2\alpha - \beta}.
\]
By assumption, this implies that each of $A$ and $B$ meets at least proportion $\alpha'$ of cosets of $H$. Hence actually
\begin{equation}
\label{eq:A*Alow}
|A \cdot B| \geq |A \cdot H| + |B \cdot H| - |H| \geq  \left(2\left\lceil\alpha' Y \right\rceil - 1 \right)|H| \geq \left(2\alpha' - \frac{1}{Y}\right) \varphi(q)
\end{equation}
and so either (a) holds or (b.i) holds.

We may suppose from now on that (b.i) holds. Then using the second inequality in~\eqref{eq:A*Alow}, we obtain in this case
\begin{align}\label{eq:AB1}
|A \cdot B| \geq \left(2\lceil \alpha'Y\rceil-1\right)|H| = \frac{2\lceil \alpha'Y\rceil-1}{Y}\varphi(q).
\end{align}

If $\alpha'>1/3$ and $\beta\leq 2/3$, then if $Y$ is not $2 \pmod{3}$, we have
\[
\left\lceil\alpha' Y\right\rceil = \left\lceil\frac{Y}{3} + \left(\alpha'-\frac{1}{3}\right) Y\right\rceil \geq \frac{Y}{3}+\frac{2}{3},
\]
so we obtain by~\eqref{eq:AB1}
\[
|A \cdot B| \geq \left(\frac{2Y}{3} + \frac{4}{3} - 1\right)\frac{\varphi(q)}{Y} > \frac{2}{3} \varphi(q)\geq \beta \varphi(q),
\]
so (a) holds. 

If instead $\alpha' \geq 3/8-\varepsilon$ and $\beta< 11/16-2\varepsilon$ with $\varepsilon>0$ small, then (b.i) gives $Y<16$. Moreover, by a case check we see that
\[
\frac{2\lceil \alpha'Y\rceil-1}{Y}\geq \frac{11}{16}
\]
for all $Y\in \{1,\ldots, 15\}$ that are not of the form $Y=3k+2$ with $k\in \{0,1,2\}$. Hence, if $Y$ is not of this form, then~\eqref{eq:AB1} implies that (a) holds.

We may henceforth assume that $Y = 3k+2$ for some $k \in \mathbb{N}$ if $\alpha'>1/3$ and $\beta\leq 2/3$, and that $Y=3k+2$ with $k\in \{0,1,2\}$ if $\alpha'\geq 3/8-\varepsilon$ and $\beta<11/16-2\varepsilon$. Now $A$ and $B$ must each intersect at least 
\[
\lceil \alpha' Y \rceil \geq \left\lceil \frac{1}{3}(3k+2) \right\rceil \geq k+1
\]
cosets of $H$. If $A$ intersects $\geq k+2$ cosets of $H$, then $|A\cdot H|\geq (k+2)|H|$, so by the first inequality of~\eqref{eq:A*Alow} we have
\begin{align*}
|A\cdot B|\geq \left(k+2 + k+1 - 1\right)|H|= (2k+2)\frac{\varphi(q)}{3k+2},
\end{align*}
and now (a) holds since
$(2k+2)/(3k+2)>2/3$ for all $k\in \mathbb{N}$ and $(2k+2)/(3k+2)>11/16$ for $k\in \{0,1,2\}$. Using symmetry, we may assume that each of $A$ and $B$ intersects exactly $k+1$ cosets. 

Now since 
\[
|A\cdot B|\geq |A\cdot H|+|B\cdot H|-|H| = (2k+1)|H|,
\]
we see that $A\cdot B$ must intersect (and hence consist of) at least $2k+1$ cosets of $H$. If $A\cdot B$ is the union of at least $2k+2$ cosets of $H$, then we have
\begin{align*}
|A\cdot B|\geq (2k+2)|H|\geq 2|A| \geq 2\alpha \varphi(q) > \beta \varphi(q) 
\end{align*}
and (a) holds.

Hence if (a) does not hold, then $A$ and $B$ must each be contained in $k+1$ cosets of $H$ and $A\cdot B$ must be the union of $2k+1$ cosets of $H$. Finally, we have~\eqref{eq:alpha'} by the assumption that $A$ meets at least proposition $\alpha'$ of cosets of an index $Y$ subgroup. 
\end{proof}

We shall also use the following which is~\cite[Lemma 4]{szabo}.
\begin{lemma}
\label{le:complstructure}
Let $G$ be an abelian group of order $3k+2$ for some integer $k \geq 0$. Let $B \subseteq G$ with $|B| = k+1$. Assume that $B \cdot B = G \setminus B$ and that the stabilizer of $B \cdot B$ is the identity. Then $G$ is cyclic and for some $g\in G$ which generates $G$ we have $B = \{g^{k+1}, \dotsc, g^{2k+1}\}$.
\end{lemma}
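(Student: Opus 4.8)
The plan is to recognise $(B,B)$ as a pair that is critical for Kneser's theorem and then to invoke the structure theory of such pairs.

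\textbf{Step 1: Kneser equality.} Since $gS=S$ is equivalent to $g\,(G\setminus S)=G\setminus S$, the stabiliser of $B$ coincides with the stabiliser of $G\setminus B=B\cdot B$, which is trivial by hypothesis. Applying Kneser's theorem (Lemma~\ref{le:Kneser}) to the pair $(B,B)$ with stabiliser $H=\{e\}$ gives
\[
|B\cdot B|\ \ge\ |B|+|B|-|H|\ =\ 2(k+1)-1\ =\ 2k+1,
\]
while $|G\setminus B|=(3k+2)-(k+1)=2k+1$, so equality holds throughout and $(B,B)$ is a critical pair. (Along the way one records the useful fact that $b\cdot B$ is disjoint from $B$ for every $b\in B$, since $bB\subseteq B\cdot B=G\setminus B$.)

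\textbf{Step 2: structure of the critical pair.} Next I would feed this into the structure theorem for critical pairs in finite abelian groups (Kemperman's structure theorem; in the cyclic case one may instead use the Vosper/Freiman-type classification), whose degenerate branches are all incompatible with our numerology. If $B$ lay inside a coset of a proper subgroup $K$, then $B\cdot B$ would lie in a single coset of $K$, forcing $2k+1=|B\cdot B|\le |K|$ with $|K|\mid 3k+2$; but for $k\ge 1$ the only divisor of $3k+2$ which is $\ge 2k+1$ is $3k+2$ itself, so $K=G$, a contradiction. The periodic branch is excluded because the stabiliser of $B\cdot B$ is trivial, and the branch with $|B\cdot B|=|G|-1$ occurs only when $k=0$. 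The case $k=0$ (where $|G|=2$, $|B|=1$) is checked directly: $B=\{b\}$ with $b\neq e$, forcing $G=\langle b\rangle\cong\mathbb Z/2\mathbb Z$ and $B=\{b\}=\{g^{k+1},\dots,g^{2k+1}\}$. For $k\ge 1$ we conclude that $B$ is a geometric progression, $B=\{a,ar,\dots,ar^{k}\}$ for some $a,r\in G$.

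\textbf{Step 3: $G$ is cyclic, $r$ generates it, and $B$ is the middle interval.} With $B=\{ar^{i}:0\le i\le k\}$ one has $B\cdot B=\{a^{2}r^{j}:0\le j\le 2k\}$, which has exactly $2k+1$ elements only if $r^{0},r^{1},\dots,r^{2k}$ are distinct, i.e. $\mathrm{ord}(r)\ge 2k+1$. Since (by the structure theorem for finite abelian groups) the exponent of a non-cyclic abelian group of order $3k+2$ is at most $(3k+2)/2<2k+1$ when $k\ge 1$, the group $G$ is cyclic; and as $\mathrm{ord}(r)$ divides $3k+2$ and is $\ge 2k+1$, necessarily $\mathrm{ord}(r)=3k+2$, so $r$ generates $G$. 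Taking $g=r$, identify $G=\langle g\rangle\cong\mathbb Z/(3k+2)\mathbb Z$ and write $B=\{g^{t},g^{t+1},\dots,g^{t+k}\}$. Then $B\cdot B=\{g^{2t},\dots,g^{2t+2k}\}$ and $G\setminus B=\{g^{t+k+1},\dots,g^{t+3k+1}\}$ are both intervals of length $2k+1$ in the cyclic ordering, so $B\cdot B=G\setminus B$ forces $2t\equiv t+k+1\pmod{3k+2}$, i.e. $t\equiv k+1$. Hence $B=\{g^{k+1},g^{k+2},\dots,g^{2k+1}\}$, as claimed.

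\textbf{Main obstacle.} The substantive point is Step 2: invoking the critical-pair structure theorem in a general finite abelian group (not assumed cyclic, let alone of prime order) and verifying that every degenerate branch is ruled out by $|B|=k+1$, $|B\cdot B|=2k+1$, and triviality of the stabiliser. Once $B$ is known to be a progression, the remaining steps are a short computation; a reader wishing to avoid Kemperman's theorem could instead try to exploit directly the rigidity coming from the facts that $bB\cap B=\emptyset$ for all $b\in B$ and that the (at most) $k+1$ translates $bB$ must cover the $(2k+1)$-element set $G\setminus B$, so that any two of them meet.
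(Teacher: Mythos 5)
The paper does not actually prove this lemma --- it is quoted verbatim from Szab\'o's work (his Lemma 4) --- so the only question is whether your argument stands on its own. Steps 1 and 3 are fine: the Kneser computation, the $k=0$ case, and the endgame deducing cyclicity from $\mathrm{ord}(r)\geq 2k+1$ and pinning down $t\equiv k+1$ are all correct. The problem is Step 2, which you yourself flag as the substantive point. Kemperman's structure theorem does not say that a critical pair with aperiodic sumset, not confined to a coset of a proper subgroup and with $|B\cdot B|<|G|-1$, must be a progression: it has a further \emph{quasi-periodic} branch, in which $B=B_1\cup B_0$ with $B_1$ a nonempty union of cosets of a proper nontrivial subgroup $H$ and $B_0$ a proper nonempty subset of one further $H$-coset. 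Your three exclusions do not touch this branch, and it cannot be dismissed on numerology alone. Concretely, take $G=\mathbb{Z}/8\mathbb{Z}$ (so $k=2$) and $B=\{0,1,4\}=\{0,4\}\cup\{1\}$: then $|B|=k+1=3$, $B+B=\{0,1,2,4,5\}$ has $2k+1=5$ elements, its stabilizer is trivial, and $B$ is not contained in a coset of a proper subgroup --- yet $B$ is not an arithmetic progression. Since these are exactly the facts your Step 2 feeds into the structure theorem, the step as written would wrongly certify this $B$ as a progression.

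The branch can be closed, but only by using the full hypothesis $B\cdot B=G\setminus B$ (which Step 2 never invokes beyond $|B\cdot B|=2k+1$ and aperiodicity). Sketch: in the quasi-periodic case, $B\cdot B$ meets exactly one $H$-coset properly, namely the coset of $B_0\cdot B_0$, which lies in $b_0^2H$ where $B_0\subseteq b_0H$; on the other hand $G\setminus B$ meets exactly one $H$-coset properly, namely $b_0H$ itself. Equality of the two sets forces $b_0^2H=b_0H$, i.e.\ $B_0\subseteq H$, whence $\phi(B)\ni e$ for the projection $\phi\colon G\to G/H$; but then $\phi(B_1)\subseteq\phi(B)\cdot\phi(B)=\phi(B\cdot B)=\phi(G\setminus B)=(G/H)\setminus\phi(B_1)$, forcing $B_1=\emptyset$ and returning you to the single-coset case you already handled. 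Without some argument of this kind (or the elementary covering argument you allude to at the end, actually carried out), the proof is incomplete.
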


\section{Primes in cosets of indices \texorpdfstring{$5$}{5} and \texorpdfstring{$8$}{8}}
\label{se:leastPrimes}

In order to deal with the case that Lemma~\ref{le:structure}(b) holds, we need to have information on the distribution of primes in cosets of $\mathbb{Z}_q^{\times}$ of index $3k+2$ for small $k$ ($k\in \{0,1,2\}$ suffices for us). We can obtain lower bounds for the logarithmic density of primes in certain collections of such cosets.

\begin{lemma}[Positive logarithmic density of primes in some unions of cosets]
\label{le:k=12}
Let $q\in \mathbb{N}$ be large, let $k \in \{1, 2\}$ and let $H$ be a subgroup of index $3k+2$ such that $\mathbb{Z}_q^\times/H$ is cyclic and generated by $gH$ with $g \in \mathbb{Z}_q^\times$. Then
\begin{equation}
\label{eq:k=12claim}
\sum_{\substack{p \leq q \\ p \not \in \bigcup_{j=1}^{k+1} g^{k+j} H}} \frac{\log p}{p} \geq \frac{1}{1000} \log q.
\end{equation}
\end{lemma}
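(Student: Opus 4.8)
The plan is to pass to the cyclic quotient $G:=\mathbb{Z}_q^\times/H\cong\mathbb{Z}/(3k+2)\mathbb{Z}$, identify its elements with residues $\pmod{3k+2}$ so that $gH$ corresponds to $1$, and write $m(p)$ for the residue with $p\in g^{m(p)}H$ (defined whenever $(p,q)=1$). In these coordinates the excluded union $\bigcup_{j=1}^{k+1}g^{k+j}H$ is the interval $S=\{k+1,\dots,2k+1\}$, and its complement is the \emph{symmetric} interval $S^c=\{-k,\dots,k\}$, which in particular contains $0$. So the assertion is equivalent to $\sum_{p\le q,\ m(p)\in S^c}\tfrac{\log p}{p}\ge\tfrac1{1000}\log q$. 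I would expand $\mathbf{1}_{m(p)\in S^c}$ using the $3k+2$ characters $\chi_0,\dots,\chi_{3k+1}\pmod q$ that are trivial on $H$, normalised by $\chi_a(g)=e(a/(3k+2))$, so that orthogonality over the quotient $G$ gives
\[
\sum_{\substack{p\le q\\ m(p)\in S^c}}\frac{\log p}{p}
=\frac{2k+1}{3k+2}\sum_{\substack{p\le q\\(p,q)=1}}\frac{\log p}{p}
+\frac1{3k+2}\sum_{a=1}^{3k+1}d_a\,T_a,\qquad T_a:=\sum_{p\le q}\chi_a(p)\frac{\log p}{p},
\]
where $d_a=\sum_{j\in S^c}\chi_a(g)^{-j}$ is a \emph{real} Dirichlet-kernel value (because $S^c$ is symmetric) with $|d_a|\le 2k+1$. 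By Mertens' theorem the first term equals $\tfrac{2k+1}{3k+2}\log q+O(\log\log q)$, and $\tfrac{2k+1}{3k+2}\ge\tfrac35$ for $k\in\{1,2\}$ (the prime-power terms, and the primes dividing $q$, contribute only $O(\log\log q)$ throughout).

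The main work is to control the prime character sums $T_a$ with $a\neq0$. Every such $\chi_a$ has order dividing $3k+2\in\{5,8\}$, hence bounded order; for $k=1$ all of them are complex (order $5$), while for $k=2$ exactly one, the quadratic character $\psi:=\chi_4$ (with $\psi(g^j)=(-1)^j$), is real. The analytic input I would use --- and the point I expect to be the main obstacle, since the naive zero-free region alone does not estimate $\theta(x,\chi)$ at $x=q$ --- is that for a non-principal character $\chi\pmod q$ of bounded order that is \emph{not} an exceptional real character one has $T(\chi)=o(\log q)$; this is exactly the kind of ``logarithmic density of primes in cosets'' estimate that follows from the classical zero-free region together with a log-free zero-density estimate (Linnik's method) for characters of bounded order. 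By the Landau--Page theorem at most one of the $\chi_a$ can be exceptional, so this disposes of all of them when $k=1$, and of all except possibly $\psi$ when $k=2$.

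Granting this, for $k=1$ (no exceptional character among the $\chi_a$) the displayed identity immediately gives $\sum_{p\le q,\ m(p)\in S^c}\tfrac{\log p}{p}\ge(\tfrac35-o(1))\log q$, which is far more than $\tfrac1{1000}\log q$. For $k=2$ one computes $d_4=\sum_{j\in\{0,1,2,6,7\}}(-1)^j=1>0$, so, using the trivial bound $T_4\ge-\log q+O(1)$ (valid since $\psi(p)\ge-1$) together with $T_a=o(\log q)$ for $a\in\{1,2,3,5,6,7\}$, the same identity yields
\[
\sum_{\substack{p\le q\\ m(p)\in S^c}}\frac{\log p}{p}\ge\frac58\log q+\frac18\cdot(-\log q)+o(\log q)=\frac12\log q+o(\log q),
\]
again comfortably beating $\tfrac1{1000}\log q$; note that this works whatever the sign and size of $T_4$, precisely because $d_4=1>0$. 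Assembling the two cases and taking $q$ large enough to absorb the error terms proves the lemma. The only genuinely delicate ingredient is the estimate $T(\chi)=o(\log q)$ for non-exceptional bounded-order $\chi$ (a weaker bound of the form $|T(\chi)|\le c_0\log q$ with a small absolute $c_0$ would already suffice if one tracks the constants carefully); everything else is bookkeeping with the orthogonality identity above.
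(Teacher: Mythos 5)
The key analytic input you rely on --- that $T(\chi)=\sum_{p\le q}\chi(p)\tfrac{\log p}{p}=o(\log q)$ for every non-principal, non-exceptional character $\chi\pmod q$ of bounded order --- is not available, and this is a genuine gap. The sum runs only up to the conductor $q$, and by the explicit formula $T(\chi)$ is governed by $-\tfrac{L'}{L}(1,\chi)$ together with the zeros of $L(s,\chi)$ within distance $O(1/\log q)$ of $s=1$. Even for a completely non-exceptional character of order $5$ or $8$, a zero at $\beta=1-c/\log q$ (sitting at the edge of the classical zero-free region; log-free density estimates bound the \emph{number} of such zeros, not their existence) contributes $\asymp\log q$ to $T(\chi)$, so $T(\chi)$ can genuinely be of the same order as your main term. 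The two-sided bound $|T(\chi)|\le c_0\log q$ with small $c_0$ that you offer as a fallback is equally out of reach. What \emph{is} known (Heath-Brown, quoted in the paper as Lemma~\ref{le:realpart} via Szab\'o) is only a \emph{one-sided} bound: for a smoothed weight $f(\log p/\log q)$ with $f(t)=\alpha-t$ on $[0,\alpha]$, the \emph{real part} of the twisted sum is at most $(\alpha/8+o(1))\log q$, against a main term $(\alpha^2/2+o(1))\log q$; there is no matching lower bound beyond the trivial $-(\alpha^2/2+o(1))\log q$.

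This asymmetry is fatal to your bookkeeping. In your identity for $k=1$ the Dirichlet-kernel coefficients are $d_1=d_4=1+2\cos(2\pi/5)\approx1.618$ and $d_2=d_3=1+2\cos(4\pi/5)\approx-0.618$; if the primes were concentrated in $g^{\pm2}H$ one gets $T_1=T_4\approx-0.809\log q$ and $T_2=T_3\approx0.309\log q$, and your right-hand side collapses exactly to $0$. Ruling this out requires an \emph{upper} bound on $\Re T_2$ sharper than $0.309$ times the main term, and the only such bound available is Heath-Brown's $1/8$ versus $1/2$ (a margin of $0.154$ vs.\ $0.125$ in the smoothed normalisation --- which is precisely why the lemma only claims the constant $1/1000$, not the $3/5-o(1)$ you assert). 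The paper therefore runs the argument by contradiction: it introduces the nonnegative proportions $\beta_j$ of the smoothed prime mass in the cosets $g^{\pm j}H$, assumes the conclusion fails so that almost all mass sits in the excluded cosets, and then evaluates $\Re\sum_p\chi(p)\tfrac{\log p}{p}f(\cdot)$ for a specifically chosen character (e.g.\ $\chi(g)=e(2/5)$ for $k=1$, and $\chi^2,\chi^3$ with $\chi(g)=e(1/8)$ for $k=2$) whose values are \emph{positive} on the heavy cosets, violating the one-sided $\alpha/8$ bound. Your quotient-group/orthogonality setup is a reasonable starting frame, but without replacing the $o(\log q)$ claim by this positivity-plus-one-sided-bound mechanism the proof does not close.
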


In the proof of this lemma we use the following lemma which is~\cite[Proposition 6]{szabo} and is based on~\cite[Lemmas 5.2--5.3]{Heath-Brown2}. Also Szab\'o~\cite{szabo} used it in a similar context. 
\begin{lemma}
\label{le:realpart}
Let $q \in \mathbb{N}$ and $\alpha \in \mathbb{R}_{>0}$. Let $f \colon \mathbb{R}_{\geq 0} \to \mathbb{R}$ be defined as
\[
f(t) =
\begin{cases}
\alpha - t & \text{if $0 \leq t \leq \alpha$;} \\
0 & \text{if $t > \alpha$.}
\end{cases}
\]
\begin{enumerate}[(i)]
\item If $\chi$ is a non-principal character $\pmod{q}$ of bounded order, then
\[
\Re \sum_p \frac{\chi(p) \log p}{p} f\left(\frac{\log p}{\log q}\right) \leq \left(\frac{\alpha}{8} + o(1)\right) \log q.
\]
\item We have
\[
\sum_p \frac{\chi_0(p) \log p}{p} f\left(\frac{\log p}{\log q}\right) = \left(\frac{\alpha^2}{2} + o(1)\right) \log q.
\]
\end{enumerate}
\end{lemma}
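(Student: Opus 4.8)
The plan is to treat the two parts separately; part (ii) is elementary, and part (i) is the substantial one.

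For (ii), write $f(t)=\int_0^{\alpha}\mathbf 1_{t\le u}\,\d u$, so that
\[
\sum_p \frac{\chi_0(p)\log p}{p}f\!\left(\frac{\log p}{\log q}\right)=\int_0^{\alpha}\Bigg(\sum_{\substack{p\le q^u\\ p\nmid q}}\frac{\log p}{p}\Bigg)\d u.
\]
Mertens' theorem gives $\sum_{p\le q^u}\log p/p=u\log q+O(1)$, and removing the primes dividing $q$ costs at most $\sum_{p\mid q}\log p/p=O(\log\log q)$, so the inner sum is $u\log q+O(\log\log q)$; integrating over $u\in[0,\alpha]$ yields $\tfrac{\alpha^2}{2}\log q+O(\log\log q)$. (If one prefers von Mangoldt weights, the prime powers $p^k$ with $k\ge 2$ change the left side by $O(1)$ since $f$ is bounded.)

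For (i) I would pass to von Mangoldt weights and write $S=\sum_n\Lambda(n)\chi(n)n^{-1}f(\log n/\log q)$; it suffices to bound $\Re S$. When $\alpha\le 1/4$ this is immediate from (ii), since $|\Re S|\le\sum_{p\le q^{\alpha}}\frac{\log p}{p}f(\log p/\log q)=\tfrac{\alpha^2}{2}\log q+o(\log q)\le\tfrac{\alpha}{8}\log q+o(\log q)$, so assume $\alpha>1/4$. The starting point is the Mellin representation: the Mellin transform of $x\mapsto f(\log x/\log q)$ equals $\widehat g(s)=q^{\alpha s}/(s^2\log q)$ for $\Re s>0$ (substitute $x=q^t$), hence
\[
S=\frac{1}{2\pi i}\int_{(c)}\left(-\frac{L'}{L}(1+s,\chi)\right)\frac{q^{\alpha s}}{s^2\log q}\,\d s,\qquad c>0.
\]
I would then fix a small $\delta_0>0$ and shift the contour to $\Re s=-\delta_0$ along a Vinogradov-type zig-zag staying inside the classical zero-free region of $L(s,\chi)$; this is legitimate for $\chi$ of bounded order thanks to the effective zero-free region and accompanying bounds on $L'/L$ available there (this is where the boundedness of the order of $\chi$, together with Burgess (Lemma~\ref{le:Burgess}), enters, as in~\cite{Heath-Brown2}), and on the shifted contour $|q^{\alpha s}|=q^{\alpha\Re s}$ is exponentially small, so that part contributes $o(1)$.

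Shifting picks up the double pole of the kernel at $s=0$ and a simple pole at $s=\rho-1$ for every zero $\rho=\beta+i\gamma$ of $L(s,\chi)$ with $1-\delta_0<\beta<1$. Writing $-\tfrac{L'}{L}(1+s,\chi)=\sum_{1-\delta_0<\beta_\rho<1}\tfrac{-1}{s-(\rho-1)}+\widetilde R(s)$, with $\widetilde R$ holomorphic near $\{\Re s\ge-\delta_0\}$, one groups, for each crossed zero $\rho$, the corresponding part of the residue at $s=0$ with the residue at $s=\rho-1$; a short computation gives that their sum equals $\alpha^2(\log q)\,\Phi\!\big(\alpha(\rho-1)\log q\big)$ with $\Phi(z):=(z+1-e^z)/z^2$, and since $\Re\big((\rho-1)\log q\big)=(\beta-1)\log q<0$, the inequality $\Re\Phi(z)\le 0$ for $\Re z\le 0$ (which holds by the maximum principle: $-\Phi$ extends to an entire function whose real part is $\ge 0$ on the imaginary axis and is bounded on $\{\Re z\le0\}$) shows each crossed zero contributes $\le 0$ to $\Re S$. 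What is left is $\alpha\widetilde R(0)+\widetilde R'(0)/\log q$; since all zeros appearing in $\widetilde R$ have $|1-\rho|\ge\delta_0$ one checks $\Re\widetilde R'(0)\ll\log q$, so that term is $O(1)$, while the Hadamard product formula gives
\[
\Re\widetilde R(0)=\tfrac12\log q-\sum_{\substack{\rho:\ \delta_0\le\beta_\rho\le1-\delta_0}}\frac{1-\beta_\rho}{(1-\beta_\rho)^2+\gamma_\rho^2}+O(1).
\]
These zeros pair off under $\rho\mapsto1-\overline\rho$ (same imaginary part, conjugate-symmetric real parts), and for such a pair $\tfrac{1-\beta}{(1-\beta)^2+\gamma^2}+\tfrac{\beta}{\beta^2+\gamma^2}\ge\tfrac{1}{1/4+\gamma^2}$, so the displayed sum is $\ge\tfrac12\sum_\rho(1/4+\gamma_\rho^2)^{-1}$. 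Hence, once one knows $\sum_\rho(1/4+\gamma_\rho^2)^{-1}\ge(\tfrac34-o(1))\log q$, it follows that $\Re\widetilde R(0)\le(\tfrac18+o(1))\log q$ and therefore $\Re S\le(\tfrac{\alpha}{8}+o(1))\log q$. (If instead $L(s,\chi)$ has many zeros near $\Re s=1$, or an exceptional real zero --- the latter possible only for $\chi$ quadratic --- then the omitted zeros' partners make $\Re\widetilde R(0)$ already $\le o(\log q)$ or even negative, and the bound is trivial.)

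The main obstacle is precisely the lower bound $\sum_\rho(1/4+\gamma_\rho^2)^{-1}\ge(\tfrac34-o(1))\log q$: the Riemann--von Mangoldt formula $N(T,\chi)=\tfrac{T}{\pi}\log\tfrac{qT}{2\pi e}+O(\log(q(|T|+2)))$ has, in the range $|T|\asymp1$ that dominates the sum, an error term of the same order as the main term, so one cannot simply insert it; one must instead argue through the monotonicity of $N(T,\chi)$ and a careful treatment of the low-lying zeros (and of possible clustering of zeros near $\Re s=1$). This, together with the uniform control of $L'/L(s,\chi)$ underlying the zig-zag contour shift, is exactly the content of Heath-Brown's Lemmas~5.2--5.3 in~\cite{Heath-Brown2} (used in the same way by Szab\'o~\cite{szabo}), which I would invoke rather than reprove.
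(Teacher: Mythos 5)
Part (ii) of your proposal is fine, and your explicit-formula skeleton for part (i) (Mellin kernel $q^{\alpha s}/(s^2\log q)$, grouping each zero's residue with the corresponding piece of the $s=0$ residue, and the positivity $\Re\Phi(z)\le 0$ for $\Re z\le 0$ so that zeros can only decrease $\Re S$) is indeed the right general shape; note, however, that the paper does not prove this lemma at all — it is quoted verbatim as Szab\'o's Proposition 6, which rests on Heath-Brown's Lemmas 5.2--5.3 — so the real question is whether your reduction to the cited input is sound. It is not, for two concrete reasons. First, your pairing inequality $\frac{1-\beta}{(1-\beta)^2+\gamma^2}+\frac{\beta}{\beta^2+\gamma^2}\ge\frac{1}{1/4+\gamma^2}$ is false: for $\beta=0.9$, $\gamma=1$ the left side is $\approx 0.596$ while the right side is $0.8$, and the failure occurs precisely in the dangerous regime ($\beta$ near $1$, partner near $0$) that the argument must control. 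The correct per-pair minimum is attained at the endpoints and is only about $\frac{1}{1+\gamma^2}$; feeding this (together with the classical bound $\sum_\rho(1+\gamma_\rho^2)^{-1}\ge\tfrac14\log q+O(1)$, which is what one can actually prove) back into your scheme yields only $\Re S\le(\tfrac{3\alpha}{8}+o(1))\log q$, not $\tfrac{\alpha}{8}$. Second, the input you propose to import, $\sum_\rho(1/4+\gamma_\rho^2)^{-1}\ge(\tfrac34-o(1))\log q$, is not the content of Heath-Brown's Lemmas 5.2--5.3 (those are an explicit-formula identity with the Laplace transform of $f$ and bounds for the resulting sums over zeros), and it is not a statement one can extract by "monotonicity of $N(T,\chi)$" since the Riemann--von Mangoldt error at height $O(1)$ is of the size of the main term.

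The deeper issue is that in your final reduction the hypothesis that $\chi$ has \emph{bounded order} plays no role: your claimed zero-count bound refers to all zeros of an arbitrary non-principal $L(s,\chi)$, whereas the constant $\tfrac18$ is exactly where bounded order (hence Burgess for every $r$, hence bounds of the shape $\log|L(\sigma+it,\chi)|\lesssim\tfrac{1-\sigma}{4}\log q$ near $\sigma=1$, hence strong limits on how many low-lying zeros can cluster against the line $\Re s=1$) must enter; without such input nothing better than roughly $\tfrac38$ is available, because a configuration with most low-lying zeros at $\beta\approx1$ (partners at $\beta\approx0$) is consistent with the classical constraints and nearly kills $\sum_\rho\Re\frac{1}{1-\rho}$. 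Your only appeal to bounded order is in the contour shift, and that step is also problematic as written: the classical zero-free region has width only $c/\log q$, so a zig-zag at a \emph{fixed} abscissa $-\delta_0$ cannot "stay inside" it; one must either cross all zeros with $\beta>1-\delta_0$ and choose the contour to avoid zeros by $\gg1/\log q$ (with the attendant convergence care in defining $\widetilde R$), or work with the standard explicit-formula machinery directly. So the proposal has a genuine gap: the quantitatively decisive step — obtaining $\tfrac18$ rather than $\tfrac38$, via Burgess-based control of zeros near $\Re s=1$ for bounded-order characters — is neither proved nor correctly delegated to the cited lemmas.
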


\begin{proof}[Proof of Lemma~\ref{le:k=12}] Let $J = 2$ if $k=1$ and $J=4$ if $k=2$. Let $\alpha=1$ in Lemma~\ref{le:realpart}. For $j = 0, \dotsc, J$, we define
\[
\beta_j := \frac{\sum_{p \in g^j H \cup g^{-j}H} \frac{\chi_0(p) \log p}{p} f\left(\frac{\log p}{\log q}\right)}{\sum_{p} \frac{\chi_0(p) \log p}{p} f\left(\frac{\log p}{\log q}\right)}.
\]
Trivially
\begin{equation}
\label{eq:beta_jinfo}
\beta_j \in [0, 1] \text{   for each $j = 0, \dotsc, J$ and   } \sum_{j=0}^J \beta_j = 1.
\end{equation}
Let us first consider the case $k=1$, so that $J = 2$. If~\eqref{eq:k=12claim} does not hold, then 
\[
\sum_{\substack{p \leq q \\ p \not \in g^2 H \cup g^3 H}} \frac{\log p}{p} < \frac{1}{1000} \log q.
\]
Hence by definition of $\beta_j$ and Lemma~\ref{le:realpart}(ii)  we have $\beta_0 + \beta_1 \leq (2+o(1)) \cdot \frac{1}{1000}$, and consequently~\eqref{eq:beta_jinfo} implies that $\beta_2 \geq \frac{499}{500}-o(1)$. Let $\chi$ be the character $\pmod{q}$ for which $\chi(g) = e(2/5)$ and $\chi(h) = 1$ for every $h \in H$. Then, since $\cos(0) > \cos(2\pi/5) > 0 > \cos(4\pi/5)$, we have
\begin{align*}
&\Re \sum_p \frac{\chi(p) \log p}{p} f\left(\frac{\log p}{\log q}\right) \\
&= \left(\beta_0 + \beta_1 \cos\left(\frac{4\pi}{5}\right) + \beta_2 \cos\left(\frac{2\pi}{5}\right)\right) \sum_p \frac{\chi_0(p) \log p}{p} f\left(\frac{\log p}{\log q}\right) \\
& \geq \left(\frac{1}{500} \cos\left(\frac{4\pi}{5}\right) + \frac{499}{500} \cos\left(\frac{2\pi}{5}\right) -o(1) \right) \sum_p \frac{\chi_0(p) \log p}{p} f\left(\frac{\log p}{\log q}\right).
\end{align*}
Applying Lemma~\ref{le:realpart}(i) to the left-hand side and Lemma~\ref{le:realpart}(ii) to the right-hand side, we obtain 
\[
\frac{1}{8} \geq \left(\frac{499}{500} \cos\left(\frac{2\pi}{5}\right) + \frac{1}{500} \cos\left(\frac{4\pi}{5}\right)\right) \frac{1}{2}-o(1)=0.153\ldots+o(1).
\]
But this is not true, and we have obtained a contradiction.

Let us now turn to the case $k=2$. If~\eqref{eq:k=12claim} does not hold, then
\[
\sum_{\substack{p \leq q \\ p \not \in g^3 H \cup g^4 H \cup g^5 H}} \frac{\log p}{p} < \frac{1}{1000} \log q.
\]
Hence, by definition of $\beta_j$ and Lemma~\ref{le:realpart}(ii), 
\begin{equation}
\label{eq:b0-2}
\beta_0 + \beta_1 + \beta_2 \leq 2 \cdot \frac{1}{1000} + o(1).
\end{equation}
Let now $\chi$ be the character $\pmod{q}$ for which $\chi(g) = e(1/8)$ and $\chi(h) = 1$ for every $h \in H$. By the definitions of $\beta_j$ and Lemma~\ref{le:realpart}(ii) we obtain
\begin{align*}
&\Re \sum_p \frac{\chi(p)^2 \log p}{p} f\left(\frac{\log p}{\log q}\right) = \left(\beta_0 + \beta_4 - \beta_2\right) \sum_p \frac{\chi_0(p) \log p}{p} f\left(\frac{\log p}{\log q}\right) \\
& \geq \left(\beta_4-\frac{1}{500}+o(1)\right) \sum_p \frac{\chi_0(p) \log p}{p} f\left(\frac{\log p}{\log q}\right) = \left(\frac{\beta_4}{2}-\frac{1}{1000}+o(1)\right) \log q.
\end{align*}
Applying Lemma~\ref{le:realpart}(i) to the left-hand side, we obtain $\beta_4 \leq \frac{1}{4}+\frac{1}{500}+o(1)$, and consequently by~\eqref{eq:beta_jinfo} and~\eqref{eq:b0-2} we must have $\beta_3 \geq \frac{3}{4}-\frac{1}{250}+o(1)$.

On the other hand, using the definition of $\beta_j$ and our bounds for $\beta_j$, we obtain
\begin{align*}
&\Re \sum_p \frac{\chi(p)^3 \log p}{p} f\left(\frac{\log p}{\log q}\right) = \left(\beta_0 - \beta_1 \frac{1}{\sqrt{2}} + \beta_3 \frac{1}{\sqrt{2}} - \beta_4\right) \sum_p \frac{\chi_0(p) \log p}{p} f\left(\frac{\log p}{\log q}\right) \\
& \geq \left(-\frac{1}{500} \frac{1}{\sqrt{2}} + \left(\frac{3}{4}-\frac{1}{250}+o(1)\right) \frac{1}{\sqrt{2}} - \frac{1}{4}-\frac{1}{500}\right) \sum_p \frac{\chi_0(p) \log p}{p} f\left(\frac{\log p}{\log q}\right).
\end{align*}
Applying Lemma~\ref{le:realpart}(i) to the left-hand side and Lemma~\ref{le:realpart}(ii) to the right-hand side, we obtain 
\[
\frac{1}{8} \geq \frac{93}{250\sqrt{2}}-\frac{1}{8}-\frac{1}{1000}+o(1)=0.137\ldots+o(1),
\]
which is a contradiction.
\end{proof}

\begin{remark}
We have not tried to optimize the lower bound in Lemma~\ref{le:k=12}. In our cases $k \in \{1, 2\}$, it would be quite easy to do such an optimization by hand. For larger $k$ one could use linear programming (which has been previously used in a somewhat similar context in~\cite[Proof of Theorem 2]{MatoHecke}) to take full advantage of the information provided by Lemma~\ref{le:realpart}. Even this is not sufficient to prove a similar result in case $k = 3$, but as it turns out we will only need the cases $k\in \{1,2\}$ in the proofs of our theorems. 
\end{remark}

\section{Size of \texorpdfstring{$E_3(N)$}{E3(N)}}
\begin{proof}[Proof of Theorem~\ref{th:E3q}]
Let $\varepsilon>0$ be small. Let $N = q$ if $q$ is cube-free and $N = q^{1+100\varepsilon}$ otherwise. Adjusting $\varepsilon$, it suffices to prove that $E_3(N) = \mathbb{Z}_q^\times$. Recall the definitions of $\theta$ and $\theta_0$ from~\eqref{eq:thetaDef} and~\eqref{eq:theta0Def}. We have
\[
\theta \geq 1-\varepsilon-\frac{\log q}{3\log N} \geq \frac{2}{3} +10\varepsilon
\]
and
\[
\theta_0 =  1-\varepsilon-\frac{\log q}{4\log N} \geq  \frac{3}{4}-\varepsilon.
\]
Let us apply Proposition~\ref{prop:A'binter}. If Proposition~\ref{prop:A'binter}(a) holds, then we immediately get $E_3(N) = \mathbb{Z}_q^\times$.

Hence we can assume that Proposition~\ref{prop:A'binter}(b) holds. Let $A'$ and $B'$ be as there, in particular 
\begin{equation}
\label{eq:A'B'lowerE3}
|A'|, |B'| \geq \left(\frac{\theta}{2}-\frac{3}{2}\varepsilon\right) \varphi(q) \geq \left(\frac{1}{3} + 3\varepsilon\right)\varphi(q).
\end{equation}

Write $H$ for the stabilizer of $A' \cdot B'$. We apply Lemma~\ref{le:structure} with $A = A', B = B',\alpha=1/3+3\varepsilon$, $\alpha'=3/8-10\varepsilon^{1/2}$,  $\beta=2/3-5\varepsilon/2$ (which is an admissible choice of parameters by~\eqref{eq:A'B'lowerE3} and Proposition~\ref{prop:A'binter}(b.v)). In case Lemma~\ref{le:structure}(a) holds, we have $E_3(N) = \mathbb{Z}_q^\times$ by Proposition~\ref{prop:A'binter}(b.iii),~\eqref{eq:A'B'lowerE3}, and Lemma~\ref{le:convolution}(i). 

Hence we can assume that Lemma~\ref{le:structure}(b) holds. Then by Lemma~\ref{le:structure}(b.i)--(b.ii) $A'$ and $B'$ are contained in $k+1$ cosets of a subgroup $H$ of index $Y = 3k+2$ with
\[
1 < Y <\frac{1}{3/4-2/3-30\varepsilon^{1/2}} < 13,
\] 
so that $k \in \{0, 1, 2, 3\}$. Furthermore $A' \cdot B'$ is the union of $2k+1$ cosets of $H$. If $k = 3$ (so that $Y = 11$) then~\eqref{eq:alpha'} fails, so this case cannot actually occur.

Hence we can assume that $k \in \{0, 1, 2\}$. By Proposition~\ref{prop:A'binter}(b.v) we know that $A'$ and $B'$ are contained in the same $k+1$ cosets of $H$. Let now $a_1, \dotsc, a_{k+1}, b_1, \dotsc, b_{2k+1}$ be such that
\[
A', B' \subseteq \bigcup_{j = 1}^{k+1} a_j H
\]
and
\begin{equation}
\label{eq:A'B'union}
A' \cdot B' = \bigcup_{j = 1}^{2k+1} b_j H = \left(\bigcup_{j = 1}^{k+1} a_j H\right)^2.
\end{equation}
By Proposition~\ref{prop:A'binter}(b.v) the set $A'$ contains at least $\frac{\varepsilon}{2} \varphi(q)$ elements from each $a_j H$ with $j \in \{1, \dotsc, k+1\}$. Combining this and~\eqref{eq:A'B'union} with Proposition~\ref{prop:A'binter}(b.iii) and Lemma~\ref{le:convolution}(ii), we see that
\begin{equation}
\label{eq:ajbjcup}
\left(\bigcup_{j = 1}^{k+1} a_j H\right) \cdot \left(\bigcup_{j = 1}^{2k+1} b_j H\right) \subseteq E_3(N).
\end{equation}
The left-hand side does not yet necessarily cover all of $\mathbb{Z}_q^\times$. To proceed, we split into two cases.
\subsection{Case 1: \texorpdfstring{$k \in \{1, 2\}$}{k in {1,2}} or \texorpdfstring{$k = 0$}{k=0} and \texorpdfstring{$a_1 \in H$}{a1 in H}}
We shall in a moment show that in this case there exists $\alpha_1 \in (2\varepsilon, 1]$ such that
\begin{equation}
\label{eq:alclaim}
\sum_{\substack{q^{\alpha_1-\varepsilon} < p \leq q^{\alpha_1} \\ p \not \in \cup_{j = 1}^{k+1} a_j H}} \frac{\log p}{p} \gg \log q.
\end{equation}
Let us first show how~\eqref{eq:alclaim} implies the claim that $E_3(N) = \mathbb{Z}_q^\times$. By~\eqref{eq:alclaim} there exists $a_0 \in \mathbb{Z}_q^\times$ such that $a_0 H \not \in \{a_1H, \dotsc, a_{k+1}H\}$ and 
\[
\sum_{\substack{q^{\alpha_1-\varepsilon} < p \leq q^{\alpha_1} \\ p \in a_0 H}} \frac{\log p}{p} \gg \log q.
\]
Then, for each $a \in a_0 H \cdot \bigcup_{j = 1}^{2k+1} b_j H$ we have by~\eqref{eq:A'B'union}
\[
\sum_{\substack{cp \equiv a \pmod{q} \\ q^{\alpha_1-\varepsilon} < p \leq q^{\alpha_1} \\ c \in A' \cdot B'}} \frac{1}{p} \geq \sum_{\substack{q^{\alpha_1-\varepsilon} < p \leq q^{\alpha_1} \\ p \in a_0 H}} \frac{1}{p} \gg 1.
\]
Hence by Proposition~\ref{prop:A'binter}(b.iv) we obtain that 
\[
a_0 H \cdot \bigcup_{j = 1}^{2k+1} b_j H \subseteq E_3(N).
\]
Combining this with~\eqref{eq:ajbjcup} and Lemma~\ref{le:convolution}(i) we see that $E_3(N) = \mathbb{Z}_q^\times$.

Hence the remaining task is to prove~\eqref{eq:alclaim}. Consider first the case that $\cup_{j = 1}^{k+1} a_j H$ and $\cup_{j = 1}^{2k+1} b_j H$ are not complements of each other. In this case there exists $b_0 \in \mathbb{Z}_q^\times$ such that $b_0H$ intersects neither of these unions. By Lemma~\ref{le:P2} we know that $b_0H \cap [1, q]$ contains $\gg q/\log q$ integers who have at most two prime factors and whose prime factors are $\geq q^{1/3}$. Since $b_0 \not \in \cup_{j = 1}^{k+1} a_j H$ by Proposition~\ref{prop:A'binter}(b.vi), there must be $\gg q/\log q$ products of exactly two primes $\geq q^{1/3}$ in $b_0H \cap [1, q]$. But since
\[
b_0 \not \in \bigcup_{j = 1}^{2k+1} b_j H = \left(\bigcup_{j = 1}^{k+1} a_j H\right)^2,
\]
we see that both prime factors cannot be from $\cup_{j = 1}^{k+1} a_j H$. Consequently
\[
\frac{q}{\log q} \ll \sum_{\substack{q^{1/3} \leq p \leq q^{2/3} \\ p \not \in \cup_{j = 1}^{k+1} a_j H}} \sum_{q^{1/3} \leq p' \leq q/p} 1 \ll \sum_{\substack{q^{1/3} \leq p \leq q^{2/3} \\ p \not \in \cup_{j = 1}^{k+1} a_j H}} \frac{q}{p\log q/p}
\]
and~\eqref{eq:alclaim} follows.

Note that if $k=0$ and $a_1 \in H$, then definitely $a_1 H = H$ and $b_1 H = a_1^2 H = H$ are not complements of each other, and so in the rest of the proof of case 1 we can assume that $k \in \{1, 2\}$ and $\cup_{j = 1}^{k+1} a_j H$ and $\cup_{j = 1}^{2k+1} b_j H$ are complements of each other. But then~\eqref{eq:alclaim} follows from Lemmas~\ref{le:complstructure} and~\ref{le:k=12}.

\subsection{Case 2: \texorpdfstring{$k=0$}{k=0} and \texorpdfstring{$a_1 \not \in H$}{a1 not in H}}
\label{ssec:NQRcase}
This is the most difficult case; due to the possible existence of exceptional characters, there may exist a quadratic character $\psi$ such that $\psi(p) = -1$ for almost all primes, and thus we might indeed have $p \not \in H$ for almost all primes $p$. We need two auxiliary results before we can finish the proof of Theorem~\ref{th:E3q} in this case. In this subsection we state these results and finish the proof of Theorem~\ref{th:E3q} asssuming them. We postpone the proofs of the auxiliary results to Section~\ref{se:QR}.

First, while we cannot show that the primes with $\psi(p) = 1$ have positive logarithmic density, we can show the following.

\begin{lemma}[Lower-bounding the number of primes with $\psi(p) = 1$]\label{le:QR}
Let $q \in \mathbb{N}$ be sufficiently large and let $\psi\pmod{q}$ be a real character. Then there exists a positive constant $c_0$ such that either
\begin{align}
\label{eq:LotofQR}
\sum_{\substack{q^{1/4} < p \leq q^{5/7} \\ \psi(p) = 1}} \frac{1}{p} \geq \frac{1}{10}
\end{align}
or there exists $y \in [q^{5/7}, q]$ such that
\begin{align}
\label{eq:SomeQR}
\sum_{\substack{p \leq y \\ \psi(p) = 1}} 1 \geq c_0 y L(1, \psi)\frac{\varphi(q)}{q} \prod_{\substack{2<p \leq q \\ \psi(p) = 1}} \left(1-\frac{2}{p}\right).
\end{align}
\end{lemma}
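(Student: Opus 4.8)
The plan is to analyze the Dirichlet series (or rather the logarithmic prime sums) associated to $\psi$, splitting according to whether $L(1,\psi)$ is "not too small" or "genuinely small". Writing $P_\psi(y) = \sum_{p \le y,\, \psi(p) = 1} \tfrac1p$, the orthogonality $\mathbf 1_{\psi(p)=1} = \tfrac12(1 + \psi(p))\mathbf 1_{(p,q)=1}$ gives, for any $y \le q$,
\[
\sum_{\substack{p \le y \\ \psi(p) = 1}} \frac{\log p}{p} = \frac12 \sum_{\substack{p \le y \\ (p,q)=1}} \frac{\log p}{p} + \frac12 \sum_{p \le y} \frac{\psi(p)\log p}{p}.
\]
The first term is $\tfrac12\log y + O(1)$ by Mertens. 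So everything reduces to a lower bound for $\sum_{p \le y} \psi(p)\log p/p$, equivalently control on $\sum_{p} \psi(p)\log p\cdot p^{-1} f(\tfrac{\log p}{\log q})$ for a suitable smooth cutoff $f$ as in Lemma~\ref{le:realpart}. If $\psi$ is induced by a principal-like character or if the associated $L$-function has no real zero very close to $1$, then standard zero-free region / Mertens-type estimates give $\sum_{p\le y}\psi(p)\log p/p \ge -C$ for all $y\in[q^{1/4},q]$ with $C$ absolute, and then the split above immediately yields $P_\psi(q^{5/7}) - P_\psi(q^{1/4}) \gg 1$, so~\eqref{eq:LotofQR} holds.

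The substantive case is when $L(1,\psi)$ is small, which forces $\psi$ to be a primitive quadratic character of conductor $q_1 \mid q$ (up to the prime factors of $q$) with an exceptional Siegel zero $\beta = 1 - \lambda$, $\lambda \asymp L(1,\psi)$ small. In this regime I would use the classical fact (Pintz-type, or via the explicit formula truncated at the single exceptional zero) that for $y$ in a suitable range,
\[
\sum_{\substack{n \le y}} \Lambda(n)\psi(n) = -\frac{y^\beta}{\beta} + (\text{small}) = -y + O(\lambda y \log y) + (\text{error}),
\]
so that $\sum_{n \le y, \psi(n) = 1}\Lambda(n) \approx \tfrac12(y - y^\beta/\beta)$ can be as small as $\asymp \lambda y\log y$ — matching the shape $y L(1,\psi)\frac{\varphi(q)}{q}\prod_{\psi(p)=1}(1-2/p)$ that appears in~\eqref{eq:SomeQR} once one notes $\prod_{p\le q,\psi(p)=1}(1-2/p) \asymp \prod_{p\le q,\psi(p)=1}(1-1/p)^2 \asymp (L(1,\psi))$ times local factors, by the Euler-product factorization $\prod_p(1-1/p)^{-1}(1-\psi(p)/p)^{-1} = \zeta$-ish $\times L(1,\psi)$-ish. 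This is where the precise constant bookkeeping lives: one must relate $\prod_{2 < p \le q,\,\psi(p)=1}(1-2/p)$ to $L(1,\psi)\varphi(q)/q$ via Mertens' theorem applied separately to the sets $\{\psi(p)=1\}$ and $\{\psi(p)=-1\}$, using $\sum_{p\le q}\psi(p)/p = \log L(1,\psi) + O(1)$.

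Concretely the steps are: (1) reduce to bounding $\sum_{p\le y}\psi(p)\log p/p$ via the orthogonality split above; (2) if no exceptional zero is within, say, $c/\log q$ of $1$, invoke the standard estimate $\sum_{n\le y}\Lambda(n)\psi(n) = o(y)$ uniformly for $y \ge q^{1/4}$ (this uses Burgess-type or zero-free-region input and is really the only "prime number theorem in progressions" ingredient) and conclude~\eqref{eq:LotofQR}; (3) otherwise, with exceptional zero $\beta$, pick $y$ near the top of $[q^{5/7},q]$ and use the one-term explicit formula $\psi(y)$-count $\asymp (1-y^{\beta - 1})y/\log y$, and choose $y$ so that $1 - y^{\beta-1} \asymp \lambda \log y \asymp L(1,\psi)\log q$ is comparable to the target, giving~\eqref{eq:SomeQR}; (4) do the Mertens bookkeeping identifying $L(1,\psi)\frac{\varphi(q)}{q}\prod_{2<p\le q,\psi(p)=1}(1-2/p)$ with a quantity of size $\asymp \#\{p \le y: \psi(p)=1\}\cdot \tfrac{\log y}{y}$ up to absolute constants. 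The main obstacle is step (3)–(4): controlling the count of primes with $\psi(p)=1$ \emph{from below} uniformly in the size of $L(1,\psi)$ requires a genuinely lossless version of the explicit formula restricted to the exceptional zero (so that the lower bound does not degrade when $L(1,\psi) \to 0$), together with a careful matching of the main term against the sieve-theoretic product $\prod(1-2/p)$; getting the constant $c_0$ to come out positive and absolute, rather than depending on $\psi$, is the delicate point, and is presumably why the authors isolate this as a separate lemma to be proved in Section~\ref{se:QR} using the machinery of Heath-Brown-type arguments around Siegel zeros.
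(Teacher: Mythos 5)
There is a genuine gap, and it sits exactly at the point your proposal leans on analytic prime number theory. Step (2) asserts that, absent an exceptional zero, one has $\sum_{n\le y}\Lambda(n)\psi(n)=o(y)$ uniformly for $y\ge q^{1/4}$ ``using Burgess-type or zero-free-region input.'' No such estimate is available: the Burgess bound gives cancellation in $\sum_{n\le y}\chi(n)$ over \emph{all} integers for $y\ge q^{1/4+\varepsilon}$, but there is no known way to transfer this to the sum over primes, and the classical zero-free region yields an error term of the shape $y\exp(-c\log y/\log q)$, which is trivial throughout the range $y\le q^{O(1)}$. This is precisely the obstruction the paper emphasizes in the introduction (``there are no chances to unconditionally establish~\eqref{eq:dream}''). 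Step (3) fails for the same reason: a one-term explicit formula $\sum_{n\le y}\Lambda(n)\psi(n)=-y^{\beta}/\beta+(\text{small})$ requires controlling all the non-exceptional zeros, which in the range $y\le q$ is out of reach even with log-free zero-density estimates (those only become useful for $y\ge q^{L}$ with $L$ large, i.e.\ in Linnik's theorem territory). So neither branch of your dichotomy can be executed, and the ``delicate point'' you flag at the end is not merely delicate but impassable by this route.

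The paper's actual proof avoids zeros entirely. It applies the Chebyshev identity $\sum_{n\le q}a_n\log n=\sum_{d}\Lambda(d)\sum_{n\le q,\,d\mid n}a_n$ to the \emph{nonnegative} sifted sequence $a_n=(1\ast\psi)(n)\mathbf{1}_{(n,P(w))=1}$ with $w=q^{\varepsilon^4}$, following Dunn--Kerr--Shparlinski--Zaharescu. The total mass $\sum_n a_n\log n$ is evaluated as $\asymp q(\log q)L(1,\psi)\prod_{p<w}(1-h(p))$ using only the fundamental lemma of the sieve and the Burgess bound through Lemma~\ref{le:1astchi_d} (i.e.\ Burgess applied to the complete sum $\sum_{n\le x,\,d\mid n}(1\ast\psi)(n)$, where it \emph{is} applicable since $x/d\ge q^{1/4+\varepsilon}$). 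One then shows that prime divisors $d\le q^{1/4}$ contribute at most $\tfrac12\log q$ of the available $\log q$, and that if~\eqref{eq:LotofQR} fails the range $q^{1/4}<d\le q^{5/7}$ contributes at most $\tfrac17\log q$; the leftover mass must therefore come from primes $d\in(q^{5/7},q]$ dividing $n$, and since $a_n$ is supported on $w$-rough $n$ all of whose prime factors satisfy $\psi(p)\in\{0,1\}$, a pigeonhole over dyadic scales produces the $y$ in~\eqref{eq:SomeQR}. The nonnegativity of $(1\ast\psi)$ is what makes the bound lossless as $L(1,\psi)\to 0$ — no explicit formula or Siegel-zero analysis is needed. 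Your Mertens-type bookkeeping relating $L(1,\psi)\frac{\varphi(q)}{q}\prod_{\psi(p)=1}(1-2/p)$ to the expected prime density is in the right spirit (it matches~\eqref{eq:hprod} in the paper), but the core mechanism of the proof is the one described above, which your proposal does not contain.
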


This lemma has some similarities with the work of Dunn, Kerr, Shparlinski and Zaharescu~\cite[Theorem 1.1]{dunn} (see also work of Benli~\cite[Theorem 4]{benli}, but there is an inaccuracy\footnote{The author of~\cite{benli} is in the process of correcting this.} in the proof: In the proof of~\cite[Lemma 9]{benli} the $k$-dimensional fundamental lemma of the sieve is currently applied with level $z^s$ larger than the upper bound for $e$ in \cite[Proposition 7]{benli} --- however the methods in~\cite{benli} definitely do yield a result like~\cite[Theorem 4]{benli}, possibly with a slightly different lower bound).

\begin{remark}
The bound~\eqref{eq:LotofQR} is clearly optimal up to a constant factor, and similarly the lower bound in~\eqref{eq:SomeQR} is optimal up to a constant factor (see Lemma~\ref{le:QRupper} below).  
\end{remark} 

The second important auxiliary result is the following addition to Proposition~\ref{prop:A'binter}.
\begin{proposition}
\label{prop:A'binter-vii}
In the set-up of Proposition~\ref{prop:A'binter}, in case Proposition~\ref{prop:A'binter}(b) holds, the sets $A', B'$ satisfy in addition to (b.i)--(b.vi) also the following: 

Assume that $N \geq q$ and $M \in [q^{2/3+3\varepsilon}, q]$. Then
\[
\sum_{\substack{c p \equiv a \pmod{q} \\ p \sim M, \psi(p) = 1 \\ c \in A' \cdot B'}} 1 \gg  M L(1, \psi) \frac{\varphi(q)}{q}\prod_{\substack{2<p \leq q}} \left(1-\frac{2}{p}\right) \implies a \in E_3(N).
\]
\end{proposition}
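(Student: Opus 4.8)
The plan is to argue exactly as in the proof of Proposition~\ref{prop:A'binter}(b.iv), replacing the logarithmically weighted sum over $p_1$ with the sharp count of primes $p\sim M$ with $\psi(p)=1$, and using a sieve majorant for these primes that is sensitive to the (possibly small) density $L(1,\psi)\prod_{\psi(p)=1}(1-2/p)$ rather than to $1/\log M$. Concretely, I would first invoke the auxiliary results that the excerpt references for Case~2: let $\psi$ be the quadratic character arising from $H$, and let $h_\psi(n)=\mathbf{1}_{n\in\mathbb{P}\cap(M,2M]}\mathbf{1}_{\psi(n)=1}$. By Lemma~\ref{le:Hal-MonL1chi} (the ``lossless'' Hal\'asz--Montgomery estimate for primes with $\psi(p)=1$, which the excerpt promises), together with the sieve-constructed pseudorandom majorant for such primes, one gets that $\widehat{h_\psi}$ satisfies a large-values bound of the same shape as the one used for $f_0$ in the proof of (b.iv): outside a small exceptional set $\mathcal{X}_2$ of characters (of size polylogarithmic in $q$), $|\mathbb{E}_{n\in[N]_q}h_\psi(n)\overline\chi(n)|$ is $\ll$ a suitable negative power of $\log q$, and on $\mathcal{X}_2$ a bounded moment of $\widehat{h_\psi}$ is $O(1)$ after normalising by the expected main term $\mathfrak{S}(M):=M\,L(1,\psi)\frac{\varphi(q)}{q}\prod_{2<p\le q}(1-2/p)$.

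Next I would run the same transference comparison as in~\eqref{eq:ff0--ggf0}: by orthogonality of characters and Proposition~\ref{prop:f=g+h}(ii,iii) applied to $f$ (the sieve-majorised prime indicator used throughout Proposition~\ref{prop:transfer}), together with the bound $\mathbb{E}_{n\in[N]_q}g(n)=O(1)$, one obtains
\[
\frac{1}{|[N]_q|^2}\sum_{n\equiv a\,(q)}(f\ast f\ast h_\psi)(n)=\frac{\mathfrak{S}(M)}{\varphi(q)^2}\Big(\sum_{\substack{a_1a_2c=a\\ c\in\mathbb{Z}_q^\times}}g(a_1)g(a_2)\widetilde{h_\psi}(c)\Big)+O\!\big(\mathfrak{S}(M)\,\varphi(q)^{-1}(\log q)^{-c}\big),
\]
where $\widetilde{h_\psi}$ is the normalised indicator of primes $p\sim M$ with $\psi(p)=1$ pushed to $\mathbb{Z}_q^\times$, and the error is controlled by splitting characters into the three classes $\mathcal{X}_1,\mathcal{X}_2,\mathcal{X}_3$ exactly as before (Lemma~\ref{le:MVT} on $\mathcal{X}_1$, the bounded moment of $\widehat{h_\psi}$ combined with Proposition~\ref{prop:f=g+h}(ii) on $\mathcal{X}_2$, and Lemma~\ref{le:Hal-Mon}(i) plus $\eqref{eq:fnsecondHM}$-type bounds on $\mathcal{X}_3$). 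Since $f\le\nu$ and $f$ detects genuine primes, $(f\ast f\ast h_\psi)(n)>0$ for some $n\equiv a$ forces $a\in E_3(N)$. Passing from $g$ to $\mathbf{1}_A$ via $g(a)\ge(\varepsilon/10)\mathbf{1}_A(a)$, and then from $A$ to $A'\cdot B'$ via~\eqref{eq:cA'B'} (i.e.\ $c\in A'\cdot B'\implies \mathbf{1}_A\ast\mathbf{1}_A(c)\ge\varepsilon\varphi(q)/10$ and the telescoping inequality used to prove (b.iii)--(b.iv)), one concludes that $a\in E_3(N)$ whenever
\[
\sum_{\substack{cp\equiv a\,(q)\\ p\sim M,\ \psi(p)=1\\ c\in A'\cdot B'}}1\ \gg\ \mathfrak{S}(M),
\]
which is the asserted statement.

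The main obstacle is the second point: obtaining the \emph{lossless} large-values input for the primes with $\psi(p)=1$. The naive Hal\'asz--Montgomery bound loses a factor $\log q$ relative to the true count $\mathfrak{S}(M)$, which can be far smaller than $M/\log M$ when $L(1,\psi)$ is tiny; this is exactly the scenario (an exceptional quadratic character) that makes Case~2 hard. The fix, as flagged in the excerpt, is to build via the linear sieve a pseudorandom majorant $\nu_\psi$ for $\mathbf{1}_{n\le q,\ \psi(n)=1}$ whose total mass matches $\prod_{\psi(p)=1}(1-2/p)$ up to constants (using that $\tfrac12(1+\psi(n))$ has ``sieve dimension'' reflecting the missing residues), and whose character sums $\widehat{\nu_\psi}(\chi)$ for $\chi\ne\chi_0$ are small by Burgess, so that the duality/Lemma~\ref{le:linear} argument behind Lemma~\ref{le:Hal-Mon} goes through with $\mathfrak{S}(M)$ in place of $M/\log M$ as the normalising constant. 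Granting Lemma~\ref{le:Hal-MonL1chi} and Lemma~\ref{le:QR} (both deferred to Section~\ref{se:QR}), the rest is a routine repetition of the (b.iv) argument with the new weight, so I would state the proof at that level of detail and refer to Section~\ref{se:QR} for the construction of $\nu_\psi$ and the proof of the associated large-values estimate.
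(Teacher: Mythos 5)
Your proposal is correct and follows essentially the same route as the paper: the paper proves an added conclusion to Proposition~\ref{prop:transfer} (Proposition~\ref{prop:transferQR}) by rerunning the (b.iv) transference argument with $f_0(n)=\mathbf{1}_{n\in\mathbb{P}\cap(M,2M]}\mathbf{1}_{(n,q)=1}(1\ast\psi)(n)$ (which on primes coprime to $q$ is exactly twice your indicator $h_\psi$), controlling the character sums via Lemma~\ref{le:Hal-MonL1chi} and the upper bound of Lemma~\ref{le:QRupper}, and then descends to $A'\cdot B'$ via~\eqref{eq:cA'B'} exactly as you describe. The only cosmetic differences are that the paper splits the characters into two classes rather than three for this weight, and uses the $(1\ast\psi)$-weighted form throughout so that Lemma~\ref{le:1astchi_d} and the fundamental lemma apply directly.
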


Now we finish the proof of Theorem~\ref{th:E3q} in the remaining case $k=0$ and $a_1 \not \in H$, assuming Lemma~\ref{le:QR} and Proposition~\ref{prop:A'binter-vii}. In this case $A', B' \subseteq a_1H$ and $A' \cdot B' = a_1^2H = H$. 

By~\eqref{eq:ajbjcup} it suffices to show that $a \in E_3(N)$ for every $a \in H$. Since $A' \cdot B' = H$, by Proposition~\ref{prop:A'binter-vii} this follows if, for some $M \in [q^{2/3+3\varepsilon}, q]$, we have
\[
\sum_{\substack{p \sim M \\ \psi(p) = 1}} 1 \gg M L(1, \psi)\frac{\varphi(q)}{q} \prod_{\substack{2<p < q \\ \psi(p) = 1}} \left(1-\frac{2}{p}\right).
\]
But this either follows from Lemma~\ref{le:QR} or~\eqref{eq:LotofQR} holds. But if~\eqref{eq:LotofQR} holds, then \eqref{eq:alclaim} holds and the argument in case 1 goes through.
\end{proof}

\section{Proofs of Lemma~\ref{le:QR} and Proposition~\ref{prop:A'binter-vii}}
\label{se:QR}
In the proofs of Lemma~\ref{le:QR} and Proposition~\ref{prop:A'binter-vii} we will apply a sieve to the sequence $(1 \ast \psi)(n)$. In the course of doing this, we shall need the fundamental lemma of the sieve (see e.g.~\cite[Lemma 6.8]{friedlander}):
\begin{lemma}[Fundamental lemma of the sieve]
\label{le:fundsieve}
Let $\kappa \geq 1$ be fixed. Let $z\geq 2$ and let $D = z^s$ with $s\geq 9\kappa + 1$. There exist coefficients $\lambda_d^\pm$ such that the following hold.
\begin{enumerate}[(i)]
\item $|\lambda_d^\pm| \leq 1$ for every $d \in \mathbb{N}$ and $\lambda_d^\pm$ are supported on $\{d \leq D:\,\, d \mid P(z)\}$.
\item For every $n \in \mathbb{N}$,
\[
\sum_{d \mid n} \lambda^-_d \leq \mathbf{1}_{(n, P(z)) = 1} \leq \sum_{d \mid n} \lambda^+_d. 
\]
\item If $h \colon \mathbb{N} \to [0, 1)$ is a multiplicative function such that, for some $K \geq 1$, one has
\[
\prod_{w_1 \leq p < z_1} (1-h(p))^{-1} \leq K \left(\frac{\log z_1}{\log w_1}\right)^\kappa
\]
for any $z_1 \geq w_1 \geq 2$, then
\begin{align*}
\sum_{d \mid P(z)} \lambda_d^+ h(d) &\leq (1+e^{9\kappa-s} K^{10}) \prod_{p < z} (1-h(p)), \\
\sum_{d \mid P(z)} \lambda_d^- h(d) &\geq (1-e^{9\kappa-s} K^{10}) \prod_{p < z} (1-h(p)).
\end{align*}
\end{enumerate}
\end{lemma}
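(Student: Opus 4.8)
The plan is to obtain Lemma~\ref{le:fundsieve} directly from the classical fundamental lemma of the sieve, for instance the version in~\cite[Lemma 6.8]{friedlander}, after reconciling the normalizations. Properties (i) and (ii) require no new work: the standard truncated (Brun, or Rosser--Iwaniec) sieve construction at level of distribution $D = z^s$ already produces coefficients $\lambda_d^\pm$ supported on squarefree divisors $d \le D$ of $P(z)$ with $|\lambda_d^\pm| \le 1$ and satisfying the sandwich bound $\sum_{d\mid n}\lambda_d^- \le \mathbf{1}_{(n,P(z))=1} \le \sum_{d\mid n}\lambda_d^+$ for every $n \in \mathbb{N}$.

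The substance is in (iii). Here I would invoke the usual argument: under a dimension-$\kappa$ hypothesis of the form $\prod_{w_1 \le p < z_1}(1-h(p))^{-1} \le K(\log z_1/\log w_1)^\kappa$, the combinatorial sums $\sum_{d\mid P(z)}\lambda_d^{\pm}h(d)$ are shown to equal $\prod_{p<z}(1-h(p))$ up to a multiplicative factor that tends to $1$ geometrically fast in the truncation parameter $s$, uniformly in the remaining data. Keeping explicit track of the constants in that decay --- which arise from the number of terms surviving the truncation at level $z^s$ and from the repeated use of the Mertens-type bound, each use costing a factor controlled by $K$ --- produces an error of the shape $e^{c\kappa - s}K^{c'}$ for absolute $c, c'$; the particular values $c = 9$, $c' = 10$, valid once $s \ge 9\kappa + 1$, are then a matter of bookkeeping. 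Since in all our applications $\kappa$ is fixed and $s$ is taken large, any admissible pair of constants would serve equally well.

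The only genuine delicacy is matching conventions across sources: the dimension hypothesis is sometimes stated with a correction term $1 + O(1/\log w_1)$, sometimes over prime powers rather than over primes, and the conclusion of the fundamental lemma is variously written as $1 + O(e^{-s/2})$ or with other implied constants. So the one step that actually needs care is to check that the hypothesis imposed here is no weaker than the one used in~\cite[Lemma 6.8]{friedlander}, and that the error term $e^{9\kappa - s}K^{10}$ claimed here is implied by the reference's conclusion once $s \ge 9\kappa + 1$. Granting this verification, the lemma follows by citing~\cite[Lemma 6.8]{friedlander} and adjusting constants; no new ideas are required.
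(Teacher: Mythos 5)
Your proposal is correct and matches the paper exactly: the lemma is stated verbatim from~\cite[Lemma 6.8]{friedlander} (including the constants $e^{9\kappa-s}K^{10}$ and the condition $s\geq 9\kappa+1$), and the paper offers no proof beyond that citation. The normalization check you flag is genuinely trivial here since the hypothesis and conclusion are lifted directly from the reference, so no constant-adjusting is actually needed.
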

Before moving on, let us collect some arithmetic information concerning $(1 \ast \psi)(n)$.
\begin{lemma}
\label{le:1astchi}
Let $q \in \mathbb{N}$ and let $\psi$ be a quadratic character $\pmod{q}$. For each $\varepsilon > 0$, there exists a constant $\eta = \eta(\varepsilon)$ such that, for any $x \geq q^{1/4+\varepsilon}$,
\[
\sum_{n \leq x} (1 \ast \psi)(n) = L(1, \psi) x + O_\varepsilon(x^{1-\eta}).
\]
\end{lemma}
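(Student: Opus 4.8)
The plan is to evaluate the summatory function by the Dirichlet hyperbola method, extracting the main term $L(1,\psi)x$ and controlling the error by the Burgess bound. Write $M(x):=\sum_{n\le x}(1\ast\psi)(n)=\sum_{de\le x}\psi(d)$ and $S(t):=\sum_{n\le t}\psi(n)$. Since $\psi$ is a quadratic (hence non-principal) character, it has bounded order, so Lemma~\ref{le:Burgess} gives $|S(t)|\ll_\varepsilon t^{1-\delta}$ for all $t\ge q^{1/4+\varepsilon/2}$, for some $\delta=\delta(\varepsilon)>0$; shrinking $\delta$ if necessary we may additionally assume $(1/4+\varepsilon)/(1+\delta)\ge 1/4+\varepsilon/2$, which will be used to keep the arguments of all the character sums below above the Burgess threshold. (Implicitly we use only that $L(1,\psi)$ is a finite number; the exceptional case of small $L(1,\psi)$ plays no role here.)

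Concretely, I would fix the hyperbola parameter $y:=x^{1/(1+\delta)}$, so that $y\ge q^{(1/4+\varepsilon)/(1+\delta)}\ge q^{1/4+\varepsilon/2}$, hence $|S(t)|\ll_\varepsilon t^{1-\delta}$ for every $t\ge y$. The hyperbola identity reads
\[
M(x)=\sum_{d\le y}\psi(d)\Big\lfloor\frac{x}{d}\Big\rfloor+\sum_{e\le x/y}S\left(\frac{x}{e}\right)-S(y)\Big\lfloor\frac{x}{y}\Big\rfloor .
\]
For the first sum I would write $\lfloor x/d\rfloor=x/d+O(1)$ and use the partial-summation bound $\sum_{d>y}\psi(d)/d\ll_\varepsilon y^{-\delta}$ (which simultaneously shows that $L(1,\psi)=\sum_{d\ge1}\psi(d)/d$ converges and that $\sum_{d\le y}\psi(d)/d=L(1,\psi)+O_\varepsilon(y^{-\delta})$), obtaining $x\,L(1,\psi)+O_\varepsilon(xy^{-\delta})+O(y)$, where the $O(y)$ is just the trivial estimate for $\sum_{d\le y}\psi(d)\{x/d\}$. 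For the second sum, $x/e\ge y$ whenever $e\le x/y$, so $|S(x/e)|\ll(x/e)^{1-\delta}$ and the sum is $\ll_\varepsilon x^{1-\delta}(x/y)^{\delta}=xy^{-\delta}$. The third term is $\ll|S(y)|\,x/y\ll_\varepsilon xy^{-\delta}$. Combining, $M(x)=L(1,\psi)x+O_\varepsilon(xy^{-\delta}+y)$, and the choice $y=x^{1/(1+\delta)}$ makes $xy^{-\delta}=y=x^{1-\eta}$ with $\eta:=\delta/(1+\delta)=\eta(\varepsilon)>0$, which is the claim.

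I do not expect a genuine obstacle here: this is a textbook hyperbola-method computation. The only points requiring attention are bookkeeping ones — (i) choosing $\delta$ small enough in terms of $\varepsilon$ that every character sum appearing, namely $S(y)$ and $S(x/e)$ for $1\le e\le x/y$, has argument $\ge q^{1/4+\varepsilon/2}$ and hence enjoys the Burgess saving uniformly; and (ii) noticing that once $y$ is taken as small as $x^{1/(1+\delta)}$ the trivial bound $|\sum_{d\le y}\psi(d)\{x/d\}|\le y$ already matches the target error, so no cancellation in this particular sum is needed. All implied constants depend only on $\varepsilon$ — in particular not on $q$ or on the size of $L(1,\psi)$ — as the statement requires.
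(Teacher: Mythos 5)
Your proof is correct and is exactly the argument the paper intends: the paper's proof of this lemma is a one-line citation to the Dirichlet hyperbola method plus the Burgess bound (via Pollack), and your write-up supplies precisely that computation with the right choice of hyperbola parameter. The only unstated point is that $\psi$ need not be primitive, so Lemma~\ref{le:Burgess} must first be transferred to the inducing primitive character (a standard M\"obius-inversion step costing only a $\tau(q)\ll q^{o(1)}$ factor), which the paper itself also glosses over.
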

\begin{proof} This follows easily from the Dirichlet hyperbola method and the Burgess bound, see~\cite[Proposition 3.1]{PollackFirstSeveral}.
\end{proof}

\begin{lemma}
\label{le:1astchi_d} Let $q \in \mathbb{N}$ and let $\psi$ be a quadratic character $\pmod{q}$. For each $\varepsilon > 0$, there exists a constant $\eta = \eta(\varepsilon)$ such that whenever $d \in \mathbb{N}$ and $x/d \geq q^{1/4+\varepsilon}$, we have
\begin{align}
\label{eq:1astchi_d}
\sum_{\substack{n \leq x \\ d \mid n}} (1 \ast \psi)(n) = h(d) L(1, \psi) x + O_\varepsilon(\tau_3(d)\tau(d)(x/d)^{1-\eta}),
\end{align}
where $h$ is a multiplicative function such that, for every $p \in \mathbb{P}$,
\begin{align}
\label{eq:hdef}
h(p) = \frac{1+\psi(p)}{p}-\frac{\psi(p)}{p^2} \quad \text{and} \quad |h(p^k)| \leq \frac{2k}{p^k} \, \text{ for any $k \in \mathbb{N}$.}
\end{align}
\end{lemma}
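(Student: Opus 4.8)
The identity to be proved is a Dirichlet-hyperbola estimate for the restriction of $1\ast\psi$ to multiples of $d$, with explicit main term $h(d)L(1,\psi)x$. The plan is to reduce to Lemma~\ref{le:1astchi} by writing each $n$ divisible by $d$ as $n = d\ell$ and then factoring $d$ and $\ell$ appropriately. First I would observe that $1 \ast \psi$ is multiplicative, so for $n = d\ell$ we have $(1\ast\psi)(d\ell) = \sum_{\substack{n = ab \\ a \mid d^\infty,\, (b, \text{rad}(d)) = 1}} \cdots$; more cleanly, write $d = d_1 d_2$ where $d_1$ is built from primes dividing $\ell$ to a high power — actually the cleanest route is: for fixed $d$, split the sum over $n \le x$, $d \mid n$ according to the largest divisor $e$ of $n$ supported on the primes dividing $d$, i.e. $n = e m$ with $\text{rad}(e) \mid \text{rad}(d)$, $d \mid e$, and $(m, d) = 1$. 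Then $(1\ast\psi)(n) = (1\ast\psi)(e)\,(1\ast\psi)(m)$ by multiplicativity, and
\[
\sum_{\substack{n \le x \\ d \mid n}} (1\ast\psi)(n) = \sum_{\substack{\text{rad}(e) \mid \text{rad}(d) \\ d \mid e}} (1\ast\psi)(e) \sum_{\substack{m \le x/e \\ (m, d) = 1}} (1\ast\psi)(m).
\]

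Next I would evaluate the inner sum over $m$ coprime to $d$ by Möbius inversion on the coprimality condition, reducing to $\sum_{m' \le x/(ef)} (1\ast\psi)(m')$ over divisors $f \mid \text{rad}(d)$, each of which is handled by Lemma~\ref{le:1astchi} (applicable whenever $x/(ef) \ge q^{1/4+\varepsilon/2}$, and we simply throw the tail $ef > x q^{-1/4-\varepsilon/2}$ into the error term using the trivial bound $(1\ast\psi)(n) \le \tau(n)$). Carrying this out and summing the main terms over $e$ and $f$ produces a main term $L(1,\psi)\,x$ times an Euler-product-type factor in the primes dividing $d$; identifying that factor with $h(d)$ amounts to the local computation that at a prime $p \mid d$ with $p^k \| d$,
\[
h(p^k) = \sum_{j \ge k} \frac{(1\ast\psi)(p^j)}{p^j}\Big(1 - \tfrac1p\Big) = \sum_{j \ge k}\frac{1}{p^j}\Big(\sum_{i=0}^{j}\psi(p)^i\Big)\Big(1-\tfrac1p\Big),
\]
and evaluating the geometric series gives exactly $h(p) = \frac{1+\psi(p)}{p} - \frac{\psi(p)}{p^2}$ for $k=1$ and the bound $|h(p^k)| \le 2k/p^k$ for general $k$ (here using $|1\ast\psi(p^j)| = |\sum_{i\le j}\psi(p)^i| \le j+1$, so the tail sum is $\le \sum_{j\ge k}(j+1)/p^j \le 2k/p^k$ crudely; I would double-check the constant but it is routine). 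This establishes both assertions in \eqref{eq:hdef}.

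For the error term, the total error is a sum over $e$ (with $\text{rad}(e)\mid\text{rad}(d)$, $d\mid e$) and $f \mid \text{rad}(d)$ of $O\big(\tau(ef)(x/(ef))^{1-\eta_0}\big)$ from Lemma~\ref{le:1astchi}, plus the trivial contribution of the tail $ef > x q^{-1/4-\varepsilon/2}$ bounded by $\sum \tau(ef) \cdot (\text{short sum})$. Summing $\sum_e \tau(e) e^{-(1-\eta_0)}$ over $e$ supported on $\text{rad}(d)$ and divisible by $d$ contributes a factor $d^{-(1-\eta_0)}$ times a convergent local product that is $O(\tau_3(d))$ or so (each prime $p \mid d$ contributes a bounded geometric factor, and the number of such primes is $\omega(d)$, giving something like $\tau(d)$ or $\tau_3(d)$), and the $f$-sum contributes another $O(\tau(d))$; combining these yields the claimed error $O_\varepsilon(\tau_3(d)\tau(d)(x/d)^{1-\eta})$ after shrinking $\eta$ slightly to absorb the lost factors of $d^{\eta_0/2}$ etc. \textbf{The main obstacle} is simply bookkeeping: keeping the divisor-function factors in the error term under control while splitting off the $d$-part of $n$, and making sure the range condition $x/d \ge q^{1/4+\varepsilon}$ of the hypothesis suffices to apply Lemma~\ref{le:1astchi} after the further division by $e/d$ and $f$ — which it does, because those extra factors are $\le x q^{-1/4-\varepsilon/2}$ in the main range and the complementary range is absorbed trivially. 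No genuinely new idea is needed beyond multiplicativity plus Lemma~\ref{le:1astchi}.
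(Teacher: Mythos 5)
There is a genuine gap, and it sits at the heart of the argument. After your decomposition $n=em$ with $(m,d)=1$, Möbius inversion gives $\sum_{f\mid \mathrm{rad}(d)}\mu(f)\sum_{m\le x/e,\ f\mid m}(1\ast\psi)(m)$, and the inner sum here is \emph{not} $\sum_{m'\le x/(ef)}(1\ast\psi)(m')$: since $1\ast\psi$ is multiplicative but not completely multiplicative, $\sum_{m\le y,\ f\mid m}(1\ast\psi)(m)$ is exactly a sum of the type the lemma is trying to evaluate, so your reduction is both circular and mis-evaluated. The error propagates into the local factor: your formula $h(p^k)=(1-\tfrac1p)\sum_{j\ge k}(1\ast\psi)(p^j)p^{-j}$ does not evaluate to the $h$ in the statement. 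For $\psi(p)=1$, $k=1$ it gives $\tfrac{2p-1}{p(p-1)}$, whereas the statement requires $\tfrac{2}{p}-\tfrac{1}{p^2}=\tfrac{2p-1}{p^2}$; for $\psi(p)=-1$ it gives $\tfrac{1}{p(p+1)}$ instead of $\tfrac1{p^2}$. (They agree only when $\psi(p)=0$.) The correct local density is the ratio $\bigl(\sum_{j\ge k}(1\ast\psi)(p^j)p^{-j}\bigr)\big/\bigl(\sum_{j\ge 0}(1\ast\psi)(p^j)p^{-j}\bigr)$; your version omits the factor $(1-\psi(p)/p)$ coming from the Euler factor of $L(s,\psi)$, precisely because you treated multiples of $f$ as having relative density $1/f$ in the weighted sequence. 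Your $h$ even violates the bound $|h(p^k)|\le 2k/p^k$ you then assert: for $p=2$, $\psi(2)=1$, $k=1$ your formula gives $3/2>1$.

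The paper avoids this trap by never separating the $d$-part of $n$. It opens the convolution from the start, $\sum_{n\le x,\,d\mid n}(1\ast\psi)(n)=\sum_{m_1m_2\le x,\ d\mid m_1m_2}\psi(m_2)$, splits $d=d_1d_2$ with $d_1=(d,m_1)$, Möbius-inverts the condition $(d_2,m_1/d_1)=1$, and changes variables so that the constrained sum becomes a combination $\sum_{d=d_1fg}\mu(f)\psi(fg)\sum_{m\le x/(df)}(1\ast\psi)(m)$ of \emph{unrestricted} sums to which Lemma~\ref{le:1astchi} applies directly; the correct $h(p^k)$ then falls out of the resulting finite sum over factorizations $p^k=d_1fg$. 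To repair your write-up you would need to carry out some such hyperbola-type manipulation for the sums $\sum_{m\le y,\ f\mid m}(1\ast\psi)(m)$ anyway, so the multiplicative splitting $n=em$ buys nothing. The surrounding bookkeeping (tail of large $ef$, divisor-function losses, the range condition $x/d\ge q^{1/4+\varepsilon}$) is fine in outline, but the main term as you compute it is wrong.
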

\begin{proof}
This follows easily from Lemma~\ref{le:1astchi} (and is essentially a special case of~\cite[Proposition 7]{benli}). Let
\[
S_d := \sum_{\substack{n \leq x \\ d \mid n}} (1 \ast \psi)(n) = \sum_{\substack{m_1 m_2 \leq x \\ d \mid m_1 m_2}} \psi(m_2). 
\]
Writing $d = d_1 d_2$ with $d_1 = (d, m_1)$,  we see that
\[
S_d = \sum_{d = d_1 d_2} \sum_{\substack{m_1 \\ d_1 \mid m_1 \\ (d_2, m_1/d_1) = 1}} \sum_{\substack{m_2 \leq x/m_1 \\ d_2 \mid m_2}} \psi(m_2). 
\]

We remove the condition $(d_2, m_1/d_1) = 1$ by M\"obius inversion, obtaining 
\[
S_d = \sum_{d = d_1 d_2} \sum_{d_2 = fg} \mu(f) \sum_{\substack{m_1 \\ d_1 \mid m_1 \\ f \mid m_1/d_1}} \sum_{\substack{m_2 \leq x/m_1 \\ d_2 \mid m_2}} \psi(m_2). 
\]
Making the change of variables $m_1 \to d_1 f m_1$ and $m_2 \to m_2 d_2 = m_2 fg$, we obtain
\[
S_d = \sum_{d = d_1 f g} \mu(f)\psi(fg) \sum_{m_1 m_2 \leq x/(d_1 f^2 g)} \psi(m_2) = \sum_{d = d_1 f g} \mu(f)\psi(fg) \sum_{m \leq \frac{x}{df}} (1 \ast \psi)(m).
\]
Now $f \geq (x/d)^{\varepsilon/2}$ contribute
\[
\ll \tau_3(d) \sum_{\substack{f\mid d\\f\geq (x/d)^{\varepsilon/2}}}\frac{x}{df} \log \frac{x}{d} \ll \tau_3(d)\tau(d)\left(\frac{x}{d}\right)^{1-\varepsilon/2} \log \frac{x}{d},
\]
whereas by Lemma~\ref{le:1astchi} the contribution  of $f < (x/d)^{\varepsilon/2}$ is, for some $\eta > 0$,
\begin{align*}
&\sum_{\substack{d = d_1 f g \\ f < (x/d)^{\varepsilon/2}}} \mu(f)\psi(fg) \left(L(1, \psi) \frac{x}{df} + O\left(\left(\frac{x}{df}\right)^{1-\eta}\right)\right)\\
&= x L(1, \psi) \sum_{\substack{d = d_1 f g}} \frac{\mu(f)\psi(fg)}{d_1 f^2 g} + O\left(\tau_3(d)\left(\frac{x}{d}\right)^{1-\eta/2}\right).
\end{align*}
Writing out the Euler product we obtain~\eqref{eq:1astchi_d} with $h$ being the multiplicative function such that, for any $p \in \mathbb{P}$ and $k \in \mathbb{N}$,
\begin{align*}
h(p^k) &= \sum_{p^k = d_1 g} \frac{\psi(g)}{d_1 g} - \sum_{p^k = d_1 p g} \frac{\psi(pg)}{d_1 p^2 g} = \sum_{j=0}^k \frac{\psi(p^j)}{p^k} - \sum_{j = 0}^{k-1} \frac{\psi(p^{j+1})}{p^{k+1}} \\
&= \frac{\psi(p^k)}{p^k} + \frac{1}{p^k} \sum_{j = 0}^{k-1} \psi(p^j) \left(1-\frac{\psi(p)}{p}\right)
\end{align*}
which clearly satisfies~\eqref{eq:hdef}.
\end{proof}

Before turning to the proof of Lemma~\ref{le:QR}, let us quickly combine Lemmas~\ref{le:fundsieve} and~\ref{le:1astchi_d} to obtain an upper bound corresponding to~\eqref{eq:SomeQR}.

\begin{lemma}[Upper-bounding the number of primes with $\psi(p) = 1$]\label{le:QRupper}
Let $\varepsilon > 0$ be fixed, let $q \in \mathbb{N}$ be sufficiently large and let $\psi\pmod{q}$ be a real character. Then there exists a positive constant $C_{\varepsilon}$ such that, for any $y \in [q^{1/4+\varepsilon}, q]$ we have
\begin{align*}
\sum_{\substack{p \leq y \\ \psi(p) = 1}} 1 \leq C_{\varepsilon} y L(1, \psi)\frac{\varphi(q)}{q} \prod_{\substack{2<p \leq q \\ \psi(p) = 1}} \left(1-\frac{2}{p}\right).
\end{align*}
\end{lemma}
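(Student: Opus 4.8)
The plan is to apply the fundamental lemma of the sieve (Lemma~\ref{le:fundsieve}) to the non-negative arithmetic function $(1\ast\psi)(n)$. Since $1\ast\psi$ is multiplicative with non-negative values at prime powers, $(1\ast\psi)(n)\ge 0$ for all $n$, and $(1\ast\psi)(p)=1+\psi(p)=2$ whenever $\psi(p)=1$; hence for any $z\ge 2$
\[
2\sum_{\substack{z<p\le y\\\psi(p)=1}}1 \le \sum_{\substack{n\le y\\(n,P(z))=1}}(1\ast\psi)(n).
\]
Adding the trivial bound $\sum_{p\le z}1\ll z$, this reduces Lemma~\ref{le:QRupper} to bounding the right-hand side from above. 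First I would fix $s=19$, $D=q^{\varepsilon/2}$ and $z=D^{1/s}=q^{\varepsilon/38}$; then $z<y$, and $y/d\ge y/D\ge q^{1/4+\varepsilon/2}$ for every $d\le D$, so Lemma~\ref{le:1astchi_d}, applied with $\varepsilon/2$ in place of $\varepsilon$, is available for all $d$ occurring below, with some $\eta=\eta(\varepsilon)\in(0,1)$.

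Next I would run the upper-bound sieve. Let $\lambda_d^+$ be the weights of Lemma~\ref{le:fundsieve} for the above $z,D$ with $\kappa=2$; the Mertens-type hypothesis holds with an absolute constant $K$ because $(1-h(p))^{-1}\le(1-1/p)^{-2}$ for every prime $p$, which follows from the shape of $h$ in~\eqref{eq:hdef} (namely $1-h(p)$ equals $(1-1/p)^2$ when $\psi(p)=1$, $1-1/p^2$ when $\psi(p)=-1$, and $1-1/p$ when $p\mid q$). By Lemma~\ref{le:fundsieve}(ii) followed by Lemma~\ref{le:1astchi_d},
\[
\sum_{\substack{n\le y\\(n,P(z))=1}}(1\ast\psi)(n)\le\sum_{d\le D}\lambda_d^+\sum_{\substack{n\le y\\d\mid n}}(1\ast\psi)(n)=L(1,\psi)\,y\sum_{d\mid P(z)}\lambda_d^+h(d)+O\Big(y^{1-\eta}\sum_{d\le D}\tfrac{\tau_3(d)\tau(d)}{d^{1-\eta}}\Big).
\]
The remainder is $\ll y^{1-\eta}D^{\eta+o(1)}\ll y^{1-\eta(1-2\varepsilon)+o(1)}\ll y^{1-\eta/4}$, using $q\le y^4$ and hence $D\le y^{2\varepsilon}$; and by Lemma~\ref{le:fundsieve}(iii) the main term is $\ll L(1,\psi)\,y\prod_{p<z}(1-h(p))$.

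It then remains to match $\prod_{p<z}(1-h(p))$ with the Euler product in the statement. From the three cases above, $\prod_{p<z}(1-h(p))\asymp\prod_{\substack{p<z\\\psi(p)=1}}(1-2/p)\cdot\prod_{\substack{p\mid q\\p<z}}(1-1/p)$, where the bounded factor $\prod_{\psi(p)=-1}(1-1/p^2)$, the $p=2$ factor, and the discrepancy between $(1-1/p)^2$ and $1-2/p$ for $p\ge 3$ are all absorbed into absolute constants. Since $z$ is a fixed power of $q$, Mertens' theorem gives $\prod_{\psi(p)=1,\,p<z}(1-2/p)\asymp_\varepsilon\prod_{2<p\le q,\,\psi(p)=1}(1-2/p)$, while $\prod_{p\mid q,\,p<z}(1-1/p)=(1+o(1))\varphi(q)/q$ since the primes $p\mid q$ with $p\ge z$ contribute $1+O((\log q)/z)=1+o(1)$. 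Thus the sieve main term is $\ll_\varepsilon yL(1,\psi)\frac{\varphi(q)}{q}\prod_{2<p\le q,\,\psi(p)=1}(1-2/p)$, which is exactly the right shape. Finally, using $\frac{\varphi(q)}{q}\prod_{2<p\le q,\,\psi(p)=1}(1-2/p)\gg(\log q)^{-3}$ together with Siegel's bound $L(1,\psi)\gg_\varepsilon q^{-\eta/100}$ and $q\le y^4$, this main term is $\gg_\varepsilon y^{1-\eta/20}$, so it dominates both the $O(y^{1-\eta/4})$ remainder and the $O(z)$ term; the lemma follows with an (ineffective) constant $C_\varepsilon$.

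The main obstacle will be precisely this last comparison: the sieve is only informative once its main term beats the $y^{1-\eta}$-type remainder, and no effective lower bound for $L(1,\psi)$ achieves this when $y$ is as small as $q^{1/4+\varepsilon}$, so one is forced to invoke Siegel's ineffective bound. A secondary technical point is that one must sift also by the primes dividing $q$ — this is what produces the factor $\varphi(q)/q$ in the main term — and then carefully reconcile the local sieve densities $h(p)$ from~\eqref{eq:hdef} with the product $\prod_{2<p\le q,\,\psi(p)=1}(1-2/p)$ appearing in the statement.
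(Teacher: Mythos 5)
Your proposal is correct and follows essentially the same route as the paper's proof: an upper-bound sieve (Lemma~\ref{le:fundsieve} with $\kappa=2$) applied to $(1\ast\psi)(n)$, with Lemma~\ref{le:1astchi_d} evaluating the congruence sums, the Euler-product comparison yielding the factor $\frac{\varphi(q)}{q}\prod_{\psi(p)=1}(1-2/p)$, and Siegel's bound ensuring the main term dominates the remainder. The only differences are cosmetic (choice of $z$; you separate the primes $p\leq z$ explicitly, which the paper glosses over).
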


\begin{proof}
We can clearly assume that $\varepsilon$ is small. Let $\lambda_d^+$ be as in Lemma~\ref{le:fundsieve} with $\kappa = 2$, sifting parameter $z = q^{\varepsilon^2}$, and level $D = q^{\varepsilon/2}$ (so that $s = 1/(2\varepsilon)$).

Now by Lemma~\ref{le:fundsieve}(ii) and Lemma~\ref{le:1astchi_d}, we have, for some $\eta > 0$,
\begin{align*}
\sum_{\substack{p \leq y \\ \psi(p) = 1}} 1 &\leq \sum_{p \leq y} (1 \ast \psi)(p) \leq \sum_{n \leq y} (1 \ast \psi)(n) \sum_{\substack{e \mid n}} \lambda_e^+ \leq  \sum_{\substack{e \mid P(q^{\varepsilon^2})}} \lambda_e^+ \sum_{\substack{n \leq y \\ e \mid n}} (1 \ast \psi)(n) \\
&= y L(1, \psi) \sum_{e \mid P(q^{\varepsilon^2})} \lambda_e^+ h(e) + O(y^{1-\eta}),
\end{align*}
where $h$ is as in Lemma~\ref{le:1astchi_d}. Now the claim follows from the fact that $L(1,\psi)\gg_{\delta} q^{-\delta}$ by Siegel's theorem and Lemma~\ref{le:fundsieve}(iii) since
\begin{align}
\label{eq:hprod}
\prod_{p < q^{\varepsilon^2}} (1-h(p)) \asymp \prod_{p \leq q} (1-h(p)) \asymp \prod_{p \mid q} \left(1-\frac{1}{p}\right) \prod_{\substack{2<p \leq q \\ \psi(p) = 1}} \left(1-\frac{2}{p}\right) = \frac{\varphi(q)}{q}  \prod_{\substack{2<p \leq q \\ \psi(p) = 1}} \left(1-\frac{2}{p}\right).
\end{align}
\end{proof}

\begin{proof}[Proof of Lemma~\ref{le:QR}]
Let $w = q^{\varepsilon^4}$, where $\varepsilon > 0$ is small but fixed. Let $a_n = (1\ast \psi)(n) \mathbf{1}_{(n, P(w)) = 1}$. Note that $a_n \geq 0$ for all $n$. Let $\lambda_d^\pm$ be as in Lemma~\ref{le:fundsieve} with $\kappa = 2$, sifting parameter $z = w$, and level $D = q^{\varepsilon^2}$ (so that $s = \varepsilon^{-2}$).

We use Chebychev's method like Dunn, Kerr, Shparlinski and Zaharescu do in the proof of~\cite[Theorem 1.1]{dunn}. Now by Lemma~\ref{le:fundsieve}(ii), Lemma~\ref{le:1astchi_d}, and partial summation, we have, for some $\eta > 0$,
\begin{align}
\begin{aligned}
\label{eq:Cheby}
\sum_{d \leq q} \Lambda(d) \sum_{\substack{n \leq q \\ d \mid n}} a_n &= \sum_{n \leq q} a_n \sum_{d \mid n} \Lambda(d) = \sum_{n \leq q} a_n \log n \geq \sum_{e \mid P(w)} \lambda_e^- \sum_{\substack{n \leq q \\ e \mid n}} (1 \ast \psi)(n)(\log n) \\
&= q (\log q) L(1, \psi) \sum_{e \mid P(w)} \lambda_e^- h(e) + O(q^{1-\eta}).
\end{aligned}
\end{align}

By Lemma~\ref{le:fundsieve}(ii) and Lemma~\ref{le:1astchi_d}, we have, for some $\eta > 0$,
\begin{align}
\begin{aligned}
\label{eq:<5/7}
\sum_{\substack{d \leq q^{5/7} \\ (d, P(w)) = 1}} \Lambda(d) \sum_{\substack{n \leq q \\ d \mid n}} a_n &\leq \sum_{\substack{d \leq q^{5/7} \\ (d, P(w)) = 1}} \Lambda(d) \sum_{e \mid P(w)} \lambda_e^+ \sum_{\substack{n \leq q \\ de \mid n}} (1 \ast \psi)(n) \\
&= L(1, \psi) q \sum_{e \mid P(w)} \lambda_e^+ h(e) \cdot \sum_{\substack{d \leq q^{5/7} \\ (d, P(w)) = 1}} \Lambda(d) h(d)  + O(q^{1-\eta}). 
\end{aligned}
\end{align}

By~\eqref{eq:hdef} and Mertens' theorem,
\begin{align}
\begin{aligned}
\label{eq:<1/4}
\sum_{\substack{d \leq q^{1/4} \\ (d, P(w)) = 1}} \Lambda(d) h(d) &\leq \sum_{w \leq p \leq q^{1/4}} \frac{2\log p}{p} + \sum_{\substack{p^k \leq q^{1/4} \\ p \geq w, k \geq 2}} \frac{2k}{p^k} \log p \leq \frac{1}{2} \log q.
\end{aligned}
\end{align}
Furthermore, if~\eqref{eq:LotofQR} does not hold, then since $|h(p)| > 1/p^2$ for $p\nmid q$ implies $\psi(p)=1$, we have
\begin{equation}
\label{eq:1/4-5/7}
\sum_{\substack{q^{1/4} \leq d \leq q^{5/7} \\ (d, P(w)) = 1}} \Lambda(d) h(d) < 2 \cdot \frac{1}{10} \log q^{5/7} +O(q^{-1/10}) = \frac{1}{7} \log q + O(q^{-1/10}).
\end{equation}

Inserting~\eqref{eq:<1/4} and~\eqref{eq:1/4-5/7} into~\eqref{eq:<5/7} and noting that $L(1,\psi)\sum_{e \mid P(w)} \lambda_e^+ h(e) > 0$, we see that, for some $\eta > 0$,
\[
\sum_{\substack{d \leq q^{5/7} \\ (d, P(w)) = 1}} \Lambda(d) \sum_{\substack{n \leq q \\ d \mid n}} a_n \leq \frac{9}{14} q  L(1, \psi) (\log q)  \sum_{e \mid P(w)} \lambda_e^+ h(e) + O(q^{1-\eta}). 
\]
Once $\varepsilon$ is sufficiently small, this together with~\eqref{eq:Cheby} and Lemma~\ref{le:fundsieve}(iii) implies that, for some $\eta > 0$,
\begin{equation}
\label{eq:AfterCheb}
\sum_{q^{5/7} < d \leq q} \Lambda(d) \sum_{\substack{n \leq q \\ d \mid n}} a_n \geq \frac{1}{3}  q (\log q) L(1, \psi) \prod_{p < w} (1-h(p)) + O(q^{1-\eta}).
\end{equation}
By Siegel's bound, the right-hand side is $\gg_{\delta}q^{1-\delta}$ for any $\delta > 0$. Note that higher prime powers give a negligible contribution to the left-hand side, $a_n$ are supported on numbers whose all prime factors $p$ satisfy $\psi(p) \in \{0, 1\}$ and $p \geq w$. Furthermore $a_n = O_\varepsilon(1)$. Hence~\eqref{eq:AfterCheb} implies 
\[
q (\log q) L(1, \psi) \prod_{p < w} (1-h(p)) \ll \sum_{\substack{q^{5/7} < p \leq q \\ \psi(p) \in \{0, 1\}}} \log p \frac{q}{p \log w} + \sum_{\substack{p \leq q \\ \psi(p) \in\{0, 1\}}} \log p
\]
Now the primes with $\psi(p) = 0$ make a negligible contribution and we obtain 
\[
q (\log q) L(1, \psi) \prod_{p < w} (1-h(p))  \ll \sum_{\substack{q^{5/7}/2\leq y<q/2\\y=2^{\ell}}}\frac{1}{y}\sum_{\substack{p\sim y \\q^{5/7}<p\leq q\\ \psi(p) \in  \{0, 1\}}}1+(\log q)\sum_{\substack{p \leq q \\ \psi(p) = 1}}1.
\]
and the claim follows.
\end{proof}

We will need the following variant of the Hal{\'a}sz--Montgomery type mean value theorem (Lemma~\ref{le:Hal-Mon}) which takes into account the possible sparsity of the set of primes $p$ with $\psi(p) = 1$.

\begin{lemma}[Sharp Hal\'asz--Montgomery with weight $1*\psi$] \label{le:Hal-MonL1chi}
Let $\varepsilon > 0$ and $C \geq 1$ be fixed, $q \in \mathbb{N}$, and let $N \in [q^{2/3+3\varepsilon}, q^C]$. Let $\chi_1, \dotsc, \chi_R$ be distinct Dirichlet characters of modulus $q$ and let $\psi$ be a quadratic character of modulus $q$. Then there exists $\eta = \eta(\varepsilon) > 0$ such that, for any complex coefficients $a_n$,
\begin{align*}
&\sum_{j = 1}^R \left|\sum_{\substack{n \leq N \\ (n, P(q^\varepsilon)) = 1}} (1 \ast \psi)(n) a_n \chi_j(n)\right|^2 \\
&\ll_\varepsilon \left(N L(1, \psi) \frac{\varphi(q)}{q} \prod_{\substack{2 < p \leq q \\ \psi(p) = 1}} \left(1-\frac{2}{p}\right) + Nq^{-\eta} R\right) \sum_{\substack{n \leq N \\ (n, P(q^\varepsilon)) = 1}} (1 \ast \psi)(n) |a_n|^2.
\end{align*}
\end{lemma}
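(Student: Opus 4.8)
\emph{Plan.} The plan is to pass to the dual inequality, apply a sieve to strip off the condition $(n,P(q^\varepsilon))=1$, and then split the resulting character--sum matrix into a ``generalized diagonal'' producing the first term and a genuine off--diagonal handled by Burgess's bound via the hyperbola method.

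By the duality principle (see e.g.~\cite[Section~7.1]{iw-kow}) it suffices to prove that, for any complex $c_1,\dotsc,c_R$,
\begin{equation}
\label{eq:dualL1chiplan}
\sum_{\substack{n \leq N \\ (n, P(q^\varepsilon)) = 1}} (1 \ast \psi)(n) \Bigl|\sum_{j=1}^R c_j \chi_j(n)\Bigr|^2 \ll_\varepsilon \Bigl(N L(1, \psi) \frac{\varphi(q)}{q} \prod_{\substack{2 < p \leq q \\ \psi(p) = 1}} \Bigl(1-\frac{2}{p}\Bigr) + Nq^{-\eta} R\Bigr) \sum_{j=1}^R |c_j|^2.
\end{equation}
Since $(1\ast\psi)(n)\geq 0$ and $|\sum_j c_j\chi_j(n)|^2\geq 0$, and since $(n,P(q^\varepsilon))=1$ forces $(n,P(q^{\varepsilon_1}))=1$ with $\varepsilon_1:=\varepsilon/100$, I would invoke the fundamental lemma of the sieve (Lemma~\ref{le:fundsieve}) with $\kappa=2$, sifting parameter $z=q^{\varepsilon_1}$ and level $D=z^{19}=q^{19\varepsilon_1}$ to bound $\mathbf{1}_{(n,P(q^\varepsilon))=1}\leq\sum_{e\mid n}\lambda_e^+$. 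Expanding the square, the left side of~\eqref{eq:dualL1chiplan} is then at most $\sum_{j,k}c_j\overline{c_k}M_{jk}$, where $M_{jk} := \sum_{e \leq D} \lambda_e^+ \sum_{n \leq N,\, e \mid n} (1 \ast \psi)(n)\, \chi_j(n)\overline{\chi_k}(n)$. The matrix $(M_{jk})$ is Hermitian (the $\lambda_e^+$ and $(1\ast\psi)(n)$ are real), so it suffices to show $\max_{j}\sum_{k=1}^R |M_{jk}| \ll_\varepsilon N L(1, \psi) \frac{\varphi(q)}{q}\prod_{2<p\leq q,\,\psi(p)=1}(1-\tfrac2p)+Nq^{-\eta}R$.

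I would split the index $k$ according to whether $\chi_j\overline{\chi_k}\in\{\chi_0,\psi\}$ or not. In the former case, which for each $j$ occurs for at most two values of $k$, I use the elementary identity $(1\ast\psi)(n)\psi(n)=(1\ast\psi)(n)\mathbf{1}_{(n,q)=1}$ (valid because $(1\ast\psi)(n)\ne 0$ forces $\psi(n)\in\{0,1\}$), together with the trivial $(1\ast\psi)(n)\chi_0(n)=(1\ast\psi)(n)\mathbf{1}_{(n,q)=1}$; thus in both cases $M_{jk}=\sum_{e\leq D}\lambda_e^+\sum_{n\leq N,\,e\mid n,\,(n,q)=1}(1\ast\psi)(n)$. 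Removing $(n,q)=1$ by M\"obius inversion (the divisors $g\mid q$ with $g>q^{1/4}$ contributing negligibly via $(1\ast\psi)\leq\tau$) and applying Lemma~\ref{le:1astchi_d} to the resulting sums, followed by Lemma~\ref{le:fundsieve}(iii) and Mertens' theorem exactly as in the proof of Lemma~\ref{le:QRupper} (compare~\eqref{eq:hprod}), gives $|M_{jk}| \ll_\varepsilon N L(1, \psi) \frac{\varphi(q)}{q}\prod_{2<p\leq q,\,\psi(p)=1}(1-\tfrac2p)$; the error terms $O(N^{1-\eta_0})$ from Lemma~\ref{le:1astchi_d} are negligible since the main term is $\gg_\delta Nq^{-\delta}$ for every $\delta>0$ by Siegel's theorem. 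In the latter case $\chi:=\chi_j\overline{\chi_k}\notin\{\chi_0,\psi\}$, so that both $\chi$ and $\psi\chi$ are non-principal $\bmod\, q$; using complete multiplicativity of $\chi$ we have $(1\ast\psi)(n)\chi(n)=\bigl((\psi\chi)\ast\chi\bigr)(n)$, hence $\sum_{n\leq N,\,e\mid n}(1\ast\psi)(n)\chi(n)=\sum_{ab\leq N,\,e\mid ab}(\psi\chi)(a)\chi(b)$, and detecting the squarefree divisibility $e\mid ab$ combinatorially and separating coprimality conditions by M\"obius inversion reduces this to $\ll\tau(e)^2$ hyperbola sums $\sum_{a'b'\leq M}(\psi\chi)(a')\chi(b')$ with $M\leq N$. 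Splitting each such sum at $\sqrt M$ and bounding the inner character sums by Burgess's bound with $r=3$ (Lemma~\ref{le:Burgess}, after reducing each of $\chi,\psi\chi$ to its primitive inducing character and removing coprimality by M\"obius inversion) yields $\ll M^{5/6}q^{1/9+o(1)}+M^{2/3}q^{2/9+o(1)}\ll N^{5/6}q^{1/9+o(1)}\ll Nq^{-\varepsilon/2+o(1)}$, where the hypothesis $N\geq q^{2/3+3\varepsilon}$ enters at the last step. Therefore $|M_{jk}|\ll Nq^{-\varepsilon/2+o(1)}\sum_{e\leq D}\tau(e)^2\ll Nq^{-\varepsilon/2+19\varepsilon_1+o(1)}\ll Nq^{-\eta}$ with, say, $\eta=\varepsilon/4$; summing over the at most $R$ such $k$ for each $j$ and combining with the previous bound establishes~\eqref{eq:dualL1chiplan}.

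The main obstacle is the choice of sieve parameters in the first step: one wants $D=z^{s}$ large enough (here $s=19$, forced by the dimension $\kappa=2$ in Lemma~\ref{le:fundsieve}) that the generalized--diagonal terms acquire the correct main term with the product $\prod_{2<p\leq q,\,\psi(p)=1}(1-2/p)$, yet $D\leq q^{\varepsilon/4}$ so that the factor $\sum_{e\leq D}\tau(e)^2\ll D(\log q)^3$ lost when summing the off--diagonal estimate over $e$ does not swallow the power saving $q^{-\varepsilon/2}$ coming from Burgess's bound and $N\geq q^{2/3+3\varepsilon}$; keeping $z=q^{\varepsilon/100}\leq q^\varepsilon$ reconciles these with the requirement that the sieve majorize $\mathbf{1}_{(n,P(q^\varepsilon))=1}$. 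The other point requiring care is recognizing, via $(1\ast\psi)(n)\psi(n)=(1\ast\psi)(n)\mathbf{1}_{(n,q)=1}$, that the pairs $(\chi_j,\psi\chi_j)$ form a ``second diagonal'' which must be grouped with the true diagonal rather than estimated through cancellation.
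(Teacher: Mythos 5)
Your proposal is correct and follows essentially the same route as the paper's proof: dualize, majorize the sifting condition by the fundamental lemma of the sieve, isolate the pairs with $\chi_j\overline{\chi_k}\in\{\chi_0,\psi\}$ via the identity $(1\ast\psi)(n)\psi(n)=(1\ast\psi)(n)\mathbf{1}_{(n,q)=1}$ and evaluate their contribution with Lemma~\ref{le:1astchi_d}, Lemma~\ref{le:fundsieve}(iii) and Siegel's bound, and save a power of $q$ on the remaining pairs by the hyperbola method combined with Burgess. The only deviations --- organizing the bilinear form through $\max_j\sum_k|M_{jk}|$ rather than via $|c_j\overline{c_k}|\le(|c_j|^2+|c_k|^2)/2$ directly, and taking sieve level $D=q^{19\varepsilon/100}$ instead of $q^{\varepsilon}$ --- are cosmetic and do not affect the argument.
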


\begin{proof}
The proof is somewhat similar to the proof of Lemma~\ref{le:Hal-Mon} although there are some additional complications. By the duality principle (see e.g.~\cite[Section 7.1]{iw-kow}), it suffices to show that, for any complex coefficients $c_j$,
\begin{align}
\label{eq:dualclaimQR}
\sum_{\substack{n \leq N \\ (n, P(q^\varepsilon)) = 1}} (1 \ast \psi)(n) \left|\sum_{j = 1}^R c_j \chi_j(n)\right|^2 \ll \left(N L(1, \psi) \frac{\varphi(q)}{q} \prod_{\substack{2 < p \leq q \\ \psi(p) = 1}} \left(1-\frac{2}{p}\right) + Nq^{-\eta} R\right) \sum_{\substack{j=1}}^R |c_j|^2. 
\end{align}
Let $\lambda_d^+$ be as in Lemma~\ref{le:fundsieve} with $\kappa = 2$, sifting parameter $z = q^{\varepsilon^2}$ and level $D = q^\varepsilon$ (so that $s = \varepsilon^{-1}$). Then the left-hand side of~\eqref{eq:dualclaimQR} is by Lemma~\ref{le:fundsieve}(ii)
\begin{align}\label{eq:1*chi}
&\leq \sum_{d \leq D} \lambda_d^+ \sum_{\substack{n \leq N \\ d \mid n}} (1 \ast \psi)(n)  \left|\sum_{j = 1}^R c_j \chi_j(n)\right|^2 = \sum_{j, k = 1}^R c_j \overline{c_k}  \sum_{d \leq D} \lambda_d^+ \sum_{\substack{n \leq N \\ d \mid n}} \chi_j(n)\overline{\chi_k(n)} (1 \ast \psi)(n).
\end{align}

Note that for a character $\chi\pmod q$ with $\chi \not \in \{\chi_0,\psi\}$, by the hyperbola method and the Burgess bound (Lemma~\ref{le:Burgess}) we have
\begin{align*}
\sum_{\substack{n\leq N\\d\mid n}}\chi(n)(1*\psi)(n)=\sum_{e\leq N^{1/2}}\psi(e)\sum_{\substack{n\leq N\\d\mid n, e\mid n}}\chi(n)+\sum_{e\leq N^{1/2}}\overline{\psi}(e)\sum_{\substack{eN^{1/2}<n\leq N\\d\mid n, e\mid n}}\chi(n)\psi(n)\ll \frac{N}{d}q^{-2\eta}
\end{align*}
for some $\eta=\eta(\varepsilon)>0$, provided that $N^{1/2}/d\geq q^{1/3+\varepsilon/10}$. Using this and the assumption $N\geq q^{2/3+3\varepsilon}$, we see that~\eqref{eq:1*chi} for $\chi_j \overline{\chi_k} \not \in \{\chi_0, \psi\}$ is
\[
\ll R\sum_{j = 1}^R |c_j|^2 Nq^{-\eta}.
\]

For the remaining pairs we note that 
\[
(1 \ast \psi)(n)\psi(n) = (1 \ast \psi)(n) \chi_0(n) = (1 \ast \psi)(n) \mathbf{1}_{(n, q) = 1}
\]
and thus they contribute to~\eqref{eq:1*chi} at most 
\begin{align}\label{eq:c_j2}
\left|\sum_{\substack{j, k = 1 \\ \chi_j \overline{\chi_k} \in \{\chi_0, \psi\}}}^R c_j \overline{c_k}  \sum_{d \leq D} \lambda_d^+  \sum_{\substack{n \leq N \\ d \mid n \\ (n, q) = 1}} (1 \ast \psi)(n)\right| \leq 2\sum_{\substack{j = 1}}^R |c_j|^2 \left|\sum_{\substack{d \leq D \\ (d, q) = 1}} \lambda_d^+ \sum_{\substack{n \leq N \\ d \mid n \\ (n, q) = 1}} (1 \ast \psi)(n)\right|.
\end{align}
By M\"obius inversion, Lemma~\ref{le:1astchi_d} and~\eqref{eq:hdef}, we have, for any $d \leq D$ with $(d,q)=1$ and some $\eta'> 0$, 
\begin{align*}
\sum_{\substack{n \leq N \\ d \mid n \\ (n, q) = 1}} (1 \ast \psi)(n) &=\sum_{e\mid q}\mu(e)\sum_{\substack{n \leq N \\ de\mid n}} (1 \ast \psi)(n) \\
& =\sum_{\substack{e \mid q \\ de \leq N/q^{1/4+\varepsilon}}} \mu(e) \sum_{\substack{n \leq N \\ de\mid n}} (1 \ast \psi)(n) + O\left(\sum_{\substack{e \mid q \\ de > N/q^{1/4+\varepsilon}}}  \sum_{\substack{n \leq N \\ de\mid n}} \tau(n) \right) \\
&=\sum_{\substack{e\mid q \\ de \leq N/q^{1/4+\varepsilon}}}\mu(e)\left( L(1,\psi)h(de)N+O\left((de)^{\eta'/10}\left(\frac{N}{de}\right)^{1-\eta'}\right)\right)+O\left(q^{1/3}\right)\\
&=h(d) N L(1,\psi) \prod_{p\mid q}(1-h(p))+O\left(\left(\frac{N}{d}\right)^{1-\eta'/2}\right).
\end{align*}

Thus, using Lemma~\ref{le:fundsieve}(iii) and a variant of~\eqref{eq:hprod} we can bound the right-hand side of~\eqref{eq:c_j2} as
\begin{align*}
&= 2 \sum_{\substack{j = 1}}^R |c_j|^2 NL(1, \psi)\prod_{p\mid q}(1-h(p))\sum_{\substack{d \leq D \\ (d, q) = 1}} h(d) \lambda_d^++O\left(\sum_{j=1}^R |c_j|^2\sum_{\substack{d\leq D\\(d,q)=1}}\left(\frac{N}{d}\right)^{1-\eta'/2}\right)\\
&\leq 4\sum_{\substack{j = 1}}^R |c_j|^2 NL(1, \psi)\prod_{p\mid q}(1-h(p))\cdot \prod_{\substack{p < z\\(p,q)=1}} (1-h(p))+O\left(\sum_{j=1}^R |c_j|^2N^{1-\eta'/4}\right) \\
&\ll \sum_{\substack{j = 1}}^R |c_j|^2 NL(1, \psi)\frac{\varphi(q)}{q} \prod_{\substack{2 < p \leq q \\ \psi(p) = 1}} \left(1-\frac{2}{p}\right)
\end{align*}
by Siegel's bound $L(1,\psi)\gg_{\delta}q^{-\delta}$. Now the claim follows.
\end{proof}

This allows us to add one more conclusion to Proposition~\ref{prop:transfer}:
\begin{proposition}
\label{prop:transferQR}
In the set-up of Proposition~\ref{prop:transfer}, the set $A$ satisfies in addition to (i)--(vi) also the following: 

Assume that $N \geq q$ and $M \in [q^{2/3+3\varepsilon}, q]$. Then
\[
\sum_{\substack{p a_1 a_2 \equiv a \pmod{q} \\ p \sim M, \, \psi(p) = 1 \\ a_1, a_2 \in A}} 1 \gg  \frac{\varphi(q)}{(\log q)^{1/2-\varepsilon}} M L(1, \psi) \frac{\varphi(q)}{q}\prod_{\substack{2<p \leq q \\ \psi(p) = 1}} \left(1-\frac{2}{p}\right) \implies a \in E_3(N).
\]
\end{proposition}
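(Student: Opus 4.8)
The plan is to mimic the proof of Proposition~\ref{prop:transfer}(iv), but replacing the logarithmically-weighted third factor $f_0$ by the weight $(1 \ast \psi)(n) \mathbf{1}_{\psi(n) = 1}$ restricted to primes $p \sim M$, and crucially replacing the Hal\'asz--Montgomery input Lemma~\ref{le:Hal-Mon}(ii) by its sharp analogue Lemma~\ref{le:Hal-MonL1chi}, which is precisely engineered to be lossless when the set of primes with $\psi(p)=1$ is sparse. Concretely, define $f_1 \colon [N]_q \to \mathbb{R}_{\geq 0}$ by $f_1(n) = \mathbf{1}_{n \in \mathbb{P} \cap (M, 2M]} \mathbf{1}_{\psi(n) = 1}$ and note $f_1(n) \leq (1 \ast \psi)(n) \mathbf{1}_{(n,P(q^\varepsilon))=1}$ for primes $p > q^\varepsilon$ (since $(1 \ast \psi)(p) = 1 + \psi(p) \in \{0, 2\}$ equals $2$ exactly when $\psi(p) = 1$). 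First I would write out, by orthogonality of characters, the identity
\[
\frac{1}{|[N]_q|^2} \sum_{n \equiv a \pmod q} (f \ast f \ast f_1)(n) - \frac{1}{\varphi(q)^2}(g \ast g \ast f_1)(a) = O\left(\frac{1}{\varphi(q)} \sum_{\chi \pmod q} \left|\widehat{f}(\chi)\right| \left|\widehat{f_1}(\chi)\right| \left|\widehat{f}(\chi) - \widehat{g}(\chi)\right|\right),
\]
where $\widehat{f}(\chi) = \mathbb{E}_{n\in[N]_q} f(n)\overline{\chi}(n)$ and similarly for $g$, and $\widehat{f_1}(\chi) = \sum_{n \in [N]_q} f_1(n)\overline{\chi}(n)$, and $f$, $g$, $\delta$ are exactly as in the proof of Proposition~\ref{prop:transfer} (with $N \geq q$ so $\theta \geq 2/3 + 10\varepsilon$).

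Next I would split the characters $\chi$ according to the size of $|\widehat{f_1}(\chi)|$, exactly as in the proof of Proposition~\ref{prop:transfer}(iv): let $\mathcal{X}_1$ be those with $|\widehat{f_1}(\chi)| < (\log q)^{-10} \cdot M L(1,\psi)\frac{\varphi(q)}{q}\prod_{2<p\leq q,\,\psi(p)=1}(1-2/p)$ (call this quantity $\Delta$ for brevity), and $\mathcal{X}_2$ the rest. On $\mathcal{X}_1$, bound $|\widehat{f_1}(\chi)| \leq (\log q)^{-10}\Delta$, pull it out, apply Lemma~\ref{le:MVT} together with~\eqref{eq:fMS} to control $\sum_\chi |\widehat{f}(\chi)|^2 \ll (N + \varphi(q))\frac{\log N}{N}$, and absorb $\Delta$ against the trivial bound $\Delta \ll 1$ (using $\prod_{2<p\leq q, \psi(p)=1}(1-2/p) \leq 1$, $L(1,\psi) \ll \log q$, $M \leq q$); this gives an error $\ll \varphi(q)^{-1}(\log q)^{-9}\Delta$ or so, which is of the right order. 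On $\mathcal{X}_2$ I would apply H\"older's inequality with exponent $2$ to split $\sum_{\chi \in \mathcal{X}_2}|\widehat{f}(\chi)| |\widehat{f_1}(\chi)| |\widehat{f}(\chi)-\widehat{g}(\chi)|$; Proposition~\ref{prop:f=g+h}(ii) gives $|\widehat{f}(\chi)-\widehat{g}(\chi)| \leq \delta$, and then by Cauchy--Schwarz this is $\ll \delta (\sum_{\chi\in\mathcal{X}_2}|\widehat{f_1}(\chi)|^2)^{1/2}(\sum_{\chi\in\mathcal{X}_2}|\widehat{f}(\chi)|^2)^{1/2}$. For the first factor I apply Lemma~\ref{le:Hal-MonL1chi} (with $N = 2M$, which lies in the required range $[q^{2/3+3\varepsilon}, q^C]$, and $a_n = 1$): the diagonal term is $\ll M L(1,\psi)\frac{\varphi(q)}{q}\prod(1-2/p) \cdot \sum_{n\leq 2M,\,\psi(n)=1}(1\ast\psi)(n) \asymp \Delta^2$ after a final application of Lemma~\ref{le:1astchi}; and the off-diagonal term $M q^{-\eta}|\mathcal{X}_2|$ cannot dominate because $|\mathcal{X}_2| \ll (\log q)^{20}\Delta^{-2}$ by the definition of $\mathcal{X}_2$ together with the same Lemma~\ref{le:Hal-MonL1chi} bound (a bootstrapping argument identical to the one used for $\mathcal{X}_2$ in the proof of Proposition~\ref{prop:transfer}(iv)), and $\Delta \gg_\delta q^{-\delta}$ by Siegel's bound, so $M q^{-\eta}|\mathcal{X}_2| \ll q^{1-\eta}(\log q)^{20}\Delta^{-2} = o(\Delta^2)$ for $\delta$ small. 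Hence $\sum_{\chi\in\mathcal{X}_2}|\widehat{f_1}(\chi)|^2 \ll \Delta^2$. The second factor $\sum_{\chi\in\mathcal{X}_2}|\widehat{f}(\chi)|^2 \ll 1$ follows from Lemma~\ref{le:Hal-Mon}(i) exactly as in~\eqref{eq:fnsecondHM} of the proof of Proposition~\ref{prop:transfer} (note $\mathcal{X}_2 \subseteq \mathcal{X}_3$ up to a bounded set, or argue directly). Putting these together the $\mathcal{X}_2$ contribution is $\ll \delta \Delta / \varphi(q)$.

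Combining the two ranges yields
\[
\frac{1}{|[N]_q|^2} \sum_{n \equiv a \pmod q} (f \ast f \ast f_1)(n) = \frac{1}{\varphi(q)^2}(g \ast g \ast f_1)(a) + O\left(\frac{\delta^{1-\varepsilon/2}\,\Delta}{\varphi(q)}\right),
\]
for all $a$ (no exceptional set is needed since we only use Proposition~\ref{prop:f=g+h}(ii,iii), not an $L^2$-averaged statement). Since $f \ast f \ast f_1$ being positive at some $n \equiv a$ forces $a \in E_3(N)$ (the primes counted by $f$ are all $\leq N$ and those counted by $f_1$ are $\sim M \leq q \leq N$), and since $g \geq \varepsilon/10$ exactly on $A$, we get $a \in E_3(N)$ whenever
\[
(g \ast g \ast f_1)(a) \gg \varphi(q)\,\delta^{1-\varepsilon/2}\,\Delta \gg \frac{\varphi(q)}{(\log q)^{1/2-\varepsilon}}\,M L(1,\psi)\frac{\varphi(q)}{q}\prod_{\substack{2<p\leq q\\\psi(p)=1}}\left(1-\frac{2}{p}\right),
\]
which in turn holds as soon as $\varepsilon^2 (\mathbf{1}_A \ast \mathbf{1}_A \ast f_1)(a)$ dominates that same quantity; recalling $f_1(n) = \mathbf{1}_{n \in \mathbb{P},\, n\sim M,\, \psi(n)=1}/1$ and unwinding the convolution, $(\mathbf{1}_A \ast \mathbf{1}_A \ast f_1)(a) = \sum_{p a_1 a_2 \equiv a,\, p\sim M,\,\psi(p)=1,\, a_1,a_2\in A} 1$, this is exactly the stated hypothesis (up to the harmless constant $\varepsilon^2$, which is absorbed into the implied constant). \textbf{The main obstacle} I anticipate is verifying carefully that the off-diagonal term in Lemma~\ref{le:Hal-MonL1chi} is genuinely negligible: this requires the bootstrapping bound $|\mathcal{X}_2| \ll (\log q)^{O(1)} \Delta^{-2}$, which itself is an application of Lemma~\ref{le:Hal-MonL1chi} whose diagonal term must be shown to be $\asymp \Delta^2$ — i.e.\ one must check $\sum_{n \leq 2M,\, (n,P(q^\varepsilon))=1} (1\ast\psi)(n) \asymp M L(1,\psi) \prod_{p<q^\varepsilon}(1-h(p))$ via Lemmas~\ref{le:1astchi_d} and~\ref{le:fundsieve} and then relate $\prod_{p<q^\varepsilon}(1-h(p))$ to $\frac{\varphi(q)}{q}\prod_{2<p\leq q,\,\psi(p)=1}(1-2/p)$ through~\eqref{eq:hprod} — and that Siegel's bound $L(1,\psi) \gg_\delta q^{-\delta}$ then makes $q^{1-\eta}\Delta^{-2}$ a true power saving. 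Everything else is bookkeeping parallel to the proof of Proposition~\ref{prop:transfer}(iv).
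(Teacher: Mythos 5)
Your proposal follows essentially the same route as the paper's proof: the same choice of third factor supported on primes $p \sim M$ with $\psi(p)=1$ and weighted (up to a factor $2$) by $(1 \ast \psi)$, the same splitting of characters according to whether $|\widehat{f_1}(\chi)|$ exceeds the threshold $(\log q)^{-10} M L(1,\psi)\frac{\varphi(q)}{q}\prod_{2<p\leq q,\, \psi(p)=1}(1-\tfrac{2}{p})$, Lemma~\ref{le:MVT} on $\mathcal{X}_1$, and Lemma~\ref{le:Hal-MonL1chi} together with the bootstrapping-plus-Siegel argument to kill the off-diagonal term on $\mathcal{X}_2$. The only slip is that Lemma~\ref{le:Hal-MonL1chi} must be applied with $a_n$ equal to (half) the indicator of primes in $(M,2M]$ rather than with $a_n = 1$ --- so that its left-hand side really is $\sum_{\chi \in \mathcal{X}_2} |\widehat{f_1}(\chi)|^2$ and the diagonal factor $\sum_{p \sim M} (1\ast\psi)(p)$ is controlled by Lemma~\ref{le:QRupper} rather than Lemma~\ref{le:1astchi} --- which is exactly what the paper does.
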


\begin{proof}
We proceed similarly to the proof of Proposition~\ref{prop:transfer}(iv) but with different choice of $f_0$. We let $f \colon [N]_q \to \mathbb{R}_{\geq 0}$ be as in~\eqref{eq:fDef} and let $f_0 \colon [N]_q \to \mathbb{R}_{\geq 0}$ be defined through
\[
f_0(n) = \mathbf{1}_{n \in \mathbb{P} \cap (M, 2M]}\mathbf{1}_{(n, q) = 1} (1 \ast \psi)(n).
\]
Note that $f_0$ is supported on primes with $\psi(p)=1$ and definitely $a \in E_3(N)$ if $(f \ast f \ast f_0)(n) > 0$ for some $n \equiv a \pmod{q}$. Recall that $f$ satisfies the assumptions in Proposition~\ref{prop:f=g+h} with $\delta = (\log q)^{-1/2+\varepsilon/2}$ and $\eta = o(1)$. Let $g$ then be as in Proposition~\ref{prop:f=g+h}.

Now by orthogonality of characters and Proposition~\ref{prop:f=g+h}(iii) (abusing the notation to consider $f_0$ as a function from $\mathbb{Z}_q^\times$ when necessary)
\begin{align}
\begin{aligned}
\label{eq:f0ff-f0ggQR}
&\frac{1}{|[N]_q|^2} \sum_{n \equiv a \pmod{q}} (f \ast f \ast f_0)(n) - \frac{1}{\varphi(q)^2} (g \ast g \ast f_0)(a) \\
&=\frac{1}{\varphi(q)} \sum_{\chi \pmod{q}} \left(\left(\mathbb{E}_{n \in [N]_q} f(n) \overline{\chi}(n)\right)^2 - \left(\mathbb{E}_{b \in \mathbb{Z}_q^\times} g(b) \overline{\chi}(b)\right)^2\right)  \sum_{n \sim M} f_0(n) \overline{\chi(n)} \chi(a) \\
&=O\bigg(\frac{1}{\varphi(q)} \sum_{\chi \pmod{q}} \left|\mathbb{E}_{n \in [N]_q} f(n) \overline{\chi}(n)\right| \bigg| \sum_{n \sim M} f_0(n) \overline{\chi(n)}\bigg|\\
&\quad \quad \quad \quad\cdot\left|\mathbb{E}_{n \in [N]_q} f(n) \overline{\chi}(n) - \mathbb{E}_{b \in \mathbb{Z}_q^\times} g(b) \overline{\chi}(b)\right|\bigg).
\end{aligned}
\end{align}

Let us first consider the right-hand side. Let the multiplicative function $h$ be as in Lemma~\ref{le:1astchi_d}. We split the characters into two sets
\[
\mathcal{X}_1 = \left\{\chi \pmod{q} \colon \left|\sum_{n \sim M} f_0(n) \overline{\chi}(n)\right| < (\log q)^{-10} M L(1, \psi) \frac{\varphi(q)}{q}\prod_{\substack{2<p \leq q \\ \psi(p) = 1}} \left(1-\frac{2}{p}\right) \right\}
\]
and
\[
\mathcal{X}_2 = \left\{\chi \pmod{q} \colon \chi \not \in \mathcal{X}_1\right\}.
\]
By the definition of $\mathcal{X}_1$, Proposition~\ref{prop:f=g+h}(iii) and Lemma~\ref{le:MVT}, we see that 
\begin{align}
\label{eq:viiX1}
\begin{aligned}
&\frac{1}{\varphi(q)} \sum_{\chi \in \mathcal{X}_1} \left|\mathbb{E}_{n \in [N]_q} f(n) \overline{\chi}(n)\right| \left| \sum_{n \sim M} f_0(n) \overline{\chi(n)}\right| \left|\mathbb{E}_{n \in [N]_q} f(n) \overline{\chi}(n) - \mathbb{E}_{b \in \mathbb{Z}_q^\times} g(b) \overline{\chi}(b)\right| \\
&\ll (\log q)^{-10} M L(1, \psi) \frac{\varphi(q)}{q}\prod_{\substack{2<p \leq q \\ \psi(p) = 1}} \left(1-\frac{2}{p}\right) \frac{1}{\varphi(q)} \sum_{\chi \pmod{q}} \left|\mathbb{E}_{n \in [N]_q} f(n) \overline{\chi}(n)\right|^2 \\
&\ll   (\log q)^{-10} M L(1, \psi) \frac{\varphi(q)}{q}\prod_{\substack{2<p \leq q \\ \psi(p) = 1}} \left(1-\frac{2}{p}\right) \frac{1}{\varphi(q)} (N+\varphi(q)) \frac{\log N}{N} \\
&\ll \frac{1}{\varphi(q) (\log q)^{9}} M L(1, \psi) \frac{\varphi(q)}{q}\prod_{\substack{2<p \leq q \\ \psi(p) = 1}} \left(1-\frac{2}{p}\right).
\end{aligned}
\end{align}
On the other hand, by the definition of $\mathcal{X}_2$, Lemma~\ref{le:Hal-MonL1chi}, and Lemma~\ref{le:QRupper}, we obtain
\begin{align*}
&\left((\log q)^{-10} M L(1, \psi) \frac{\varphi(q)}{q}\prod_{\substack{2<p \leq q \\ \psi(p) = 1}} \left(1-\frac{2}{p}\right) \right)^{2} |\mathcal{X}_2| \leq  \sum_{\chi \in \mathcal{X}_2} \left|\sum_{n \sim M} f_0(n) \overline{\chi}(n)\right|^{2} \\
&\ll \left(M L(1, \psi) \frac{\varphi(q)}{q}\prod_{\substack{2<p \leq q \\ \psi(p) = 1}} \left(1-\frac{2}{p}\right) + Mq^{-\eta} |\mathcal{X}_2|\right) \sum_{\substack{p \sim M\\(p,q)=1}} (1 \ast \psi)(p) \\
&\ll \left(M L(1, \psi) \frac{\varphi(q)}{q}\prod_{\substack{2<p \leq q \\ \psi(p) = 1}} \left(1-\frac{2}{p}\right) + Mq^{-\eta} |\mathcal{X}_2|\right) M L(1, \psi) \frac{\varphi(q)}{q}\prod_{\substack{2<p \leq q \\ \psi(p) = 1}} \left(1-\frac{2}{p}\right).
\end{align*}
By Siegel's bound $L(1, \psi) \gg q^{-\eta/10}$, so the second term cannot dominate and so $|\mathcal{X}_2| \ll (\log q)^{20}$ and moreover
\begin{equation*}
\sum_{\chi \in \mathcal{X}_2} \left|\sum_{n \sim M} f_0(n) \overline{\chi}(n)\right|^{2} \ll \left(M L(1, \psi) \frac{\varphi(q)}{q}\prod_{\substack{2<p \leq q \\ \psi(p) = 1}} \left(1-\frac{2}{p}\right)\right)^2.
\end{equation*}
Furthermore using the bound $|\mathcal{X}_2| \ll (\log q)^{20}$ and Lemma~\ref{le:Hal-Mon}(i), we obtain similarly to~\eqref{eq:fnsecondHM} that
\[
\sum_{\chi \in \mathcal{X}_2}\left|\mathbb{E}_{n \in [N]_q} f(n) \overline{\chi}(n)\right|^2 \ll 1.
\]
Combining these with Proposition~\ref{prop:f=g+h}(ii) and Cauchy--Schwarz, we obtain 
\begin{align*}
&\frac{1}{\varphi(q)} \sum_{\chi \in \mathcal{X}_2} \left|\mathbb{E}_{n \in [N]_q} f(n) \overline{\chi}(n)\right|  \left|\sum_{n \sim M} f_0(n) \overline{\chi}(n)\right| \left|\mathbb{E}_{n \in [N]_q} f(n) \overline{\chi}(n) - \mathbb{E}_{b \in \mathbb{Z}_q^\times} g(b) \overline{\chi}(b)\right| \\
&\ll \delta \frac{1}{\varphi(q)} \left(\sum_{\chi \in \mathcal{X}_2} \left|\sum_{n \sim M} f_0(n) \overline{\chi}(n)\right|^{2}\right)^{\frac{1}{2}}  \left(\sum_{\chi \in \mathcal{X}_2}\left|\mathbb{E}_{n \in [N]_q} f(n) \overline{\chi}(n)\right|^2\right)^{1/2} \\
&\ll \frac{\delta}{\varphi(q)} M L(1, \psi) \frac{\varphi(q)}{q}\prod_{\substack{2<p \leq q \\ \psi(p) = 1}} \left(1-\frac{2}{p}\right).
\end{align*}
Combining with~\eqref{eq:f0ff-f0ggQR} and~\eqref{eq:viiX1}, we obtain
\[
\frac{1}{|[N]_q|^2} \sum_{n \equiv a \pmod{q}} (f \ast f \ast f_0)(n) = \frac{1}{\varphi(q)^2} (g \ast g \ast f_0)(a) + O\left(\frac{\delta}{\varphi(q)} M L(1, \psi) \frac{\varphi(q)}{q}\prod_{\substack{2<p \leq q \\ \psi(p) = 1}} \left(1-\frac{2}{p}\right)\right).
\]
Recall $A = \{a \in \mathbb{Z}_q^\times \colon |g(a)| \geq \varepsilon/10\}$ in Proposition~\ref{prop:transfer}, so the main term on the right-hand side is 
\[
\geq \frac{1}{\varphi(q)^2}\sum_{\substack{p a_1 a_2 \equiv a \pmod{q} \\ p \sim M, \psi(p) = 1}} g(a_1) g(a_2) \geq \frac{\varepsilon^2}{\varphi(q)^2} \sum_{\substack{p a_1 a_2 \equiv a \pmod{q} \\ p \sim M, \psi(p) = 1 \\ a_1, a_2 \in A}} 1,
\]
and the claim follows.
\end{proof}

Now Proposition~\ref{prop:A'binter-vii} finally follows from Proposition~\ref{prop:transferQR} since~\eqref{eq:cA'B'} implies that
\[
\sum_{\substack{p a_1 a_2 \equiv a \pmod{q} \\ p \sim M, \, \psi(p) = 1 \\ a_1, a_2 \in A}} 1 \geq \sum_{\substack{cp \equiv a \pmod{q} \\ p \sim M, \, \psi(p) = 1 \\ c \in A' \cdot B'}} (\mathbf{1}_A \ast \mathbf{1}_A)(c) \geq \frac{\varepsilon \varphi(q)}{10} \sum_{\substack{c p \equiv a \pmod{q} \\ p \sim M, \, \psi(p) = 1 \\ c \in A' \cdot B'}} 1.
\]

\section{Size of \texorpdfstring{$E_2(q)$}{E2(q)}}
\begin{proof}[Proof of Theorem~\ref{thm:E2>1/2}] Let $\varepsilon>0$ be small. Recall the definitions of $\theta$ and $\theta_0$ from~\eqref{eq:thetaDef} and~\eqref{eq:theta0Def}. Note that $\theta \geq  3/4-\varepsilon$ if $q$ is cube-free and $\theta\geq 2/3-\varepsilon$ if $q$ is not cube-free. Let us apply Proposition~\ref{prop:A'binter}. If Proposition~\ref{prop:A'binter}(a) holds, then the claim follows immediately (adjusting $\varepsilon$). Hence we can assume that Proposition~\ref{prop:A'binter}(b) holds. Let $A'$ and $B'$ be as there. We apply Lemma~\ref{le:structure} for these sets, taking $\alpha = \alpha'= 3/8-2\varepsilon$, and $\beta = 11/16-5\varepsilon$
if $q$ is cube-free and $\alpha = 1/3-2\varepsilon$, $\alpha' = 3/8-10\varepsilon^{1/2}$ and $\beta = 2/3-7\varepsilon$ if $q$ is not cube-free. If Lemma~\ref{le:structure}(a) holds, the claim follows immediately (adjusting $\varepsilon$) from Proposition~\ref{prop:A'binter}(b.ii)

We may then assume that Lemma~\ref{le:structure}(b) holds. In particular, then  the stabilizer $H$ of $A' \cdot B'$ has index $Y=3k+2$ with $k \in \{0, 1, 2\}$ (in case $q$ is cube-free, this follows from Lemma~\ref{le:structure}(b.iii) and in case $q$ is not cube-free first Lemma~\ref{le:structure}(b.i) implies that $Y \leq 12$ so that $k \in \{0, 1, 2, 3\}$ and then~\eqref{eq:alpha'} implies $k \neq 3$). Moreover, by Lemma~\ref{le:structure}(b.ii) there exist cosets $a_1H, \dotsc, a_{k+1}H$ and $b_1H, \dotsc, b_{2k+1}H$ such that $A'$ and $B'$ are contained in $\cup_{j=1}^{k+1} a_j H$ (by Proposition~\ref{prop:A'binter}(b.v) $A'$ and $B'$ are contained in the same $k+1$ cosets) and
\[
\left(\bigcup_{j=1}^{k+1} a_j H\right) \cdot \left(\bigcup_{j=1}^{k+1} a_j H\right) = \bigcup_{j=1}^{2k+1} b_j H = A' \cdot B'.
\]
Furthermore by Proposition~\ref{prop:A'binter}(b.ii) and (b.vi)
\begin{equation}
\label{eq:E2capcupsize}
\left|E_2(q)\cap \left(\bigcup_{j=1}^{2k+1} b_j H\right)\right| \geq \left(\frac{2k+1}{Y}-\varepsilon\right)\varphi(q)
\end{equation}
and
\begin{equation}
\label{eq:pnotinunion}
\sum_{\substack{p \leq q \\ p \not \in \cup_{j=1}^{k+1} a_j H}} 1 \leq (2k+1)\cdot 2\varepsilon \frac{q}{\log q}.
\end{equation}

Let us next observe that there exists $y \in [q^{\varepsilon}, q]$ and $a_0$ such that $a_0 \not \in \cup_{j=1}^{k+1} a_j H$ and
\begin{equation}
\label{eq:P1size}
\sum_{\substack{p \sim y \\ p \in a_0 H}} 1 \gg \frac{y}{q^{\eta'}}
\end{equation}
for any $\eta' > 0$; in case $k=0$ and $a_1 \not \in H$ this follows from Lemma~\ref{le:QR} and otherwise it follows as~\eqref{eq:alclaim}. 

By Kneser's theorem (Lemma~\ref{le:Kneser}),
\[
\left|\left(\bigcup_{j=0}^{k+1} a_j H\right) \cdot \left(\bigcup_{j=1}^{k+1} a_j H\right)\right| \geq (k+2)|H| + (k+1)|H| - |H| = (2k+2)|H| > \left|\bigcup_{j=1}^{2k+1} b_j H\right|.
\]
Hence there exists $j_0 \in \{1, \dotsc, k+1\}$ such that 
\begin{equation}
\label{eq:a0ajonot}
a_0 a_{j_0} H \not \in \bigcup_{j = 1}^{2k+1} b_j H.
\end{equation}

Let
\[
\mathcal{P}_1 = \{p \sim y \colon p \in a_0 H\}
\]
and
\[
\mathcal{P}_2 = \{p \leq q \colon p \in a_{j_0} H\}.
\]
Now by the prime number theorem, Lemma~\ref{le:P2} and~\eqref{eq:pnotinunion},
\begin{align}
\begin{aligned}
\label{eq:P2lowCons}
|\mathcal{P}_2| &= \sum_{p \leq q} 1 - \sum_{\substack{j=1 \\ j \neq j_0}}^{k+1} \sum_{\substack{p \leq q \\ p \in a_j H}}1 - \sum_{\substack{p \leq q \\ p \not \in \cup_{j=1}^{k+1} a_j H}} 1\geq \left(1- \frac{k}{3k+2} \cdot \frac{2}{\theta_0} - 2\varepsilon(2k+1) +o(1)\right) \frac{q}{\log q} \\
&\geq
\begin{cases}
\left(1-20\varepsilon\right) \frac{q}{\log q} &\text{if $k=0$;} \\
\left(\frac{7}{15}-20\varepsilon\right) \frac{q}{\log q} &\text{if $k=1$;} \\
\left(\frac{1}{3}-20\varepsilon\right) \frac{q}{\log q} & \text{if $k = 2$.}
\end{cases}
\end{aligned}
\end{align}
Note that, for any $a \in \mathbb{Z}_q^\times$, we have
\begin{align}\label{eq:energy}
\sum_{\substack{a = p_1 p_2 \\ p_1 \in \mathcal{P}_1\\p_2 \in \mathcal{P}_2}} 1 > 0 \implies a \in E_2(q) \cap a_0a_{j_0} H.
\end{align}
By the Cauchy--Schwarz inequality, 
\begin{align}
\label{eq:multen}
(|\mathcal{P}_1||\mathcal{P}_2|)^2 = \left(\sum_{a \in \mathbb{Z}_q^\times} \sum_{\substack{a = p_1 p_2 \\ p_1 \in \mathcal{P}_1\\p_2\in \mathcal{P}_2}} 1 \right)^2 \leq \left|E_2(q) \cap a_0a_{j_0} H\right| \sum_{a \in \mathbb{Z}_q^\times} \left(\sum_{\substack{a = p_1 p_2 \\ p_1 \in \mathcal{P}_1\\p_2\in \mathcal{P}_2}} 1 \right)^2.
\end{align}
This is a multiplicative variant of~\cite[second inequality in (2.8)]{tao-vu} --- on the right-hand side we have the multiplicative energy $E^{\times}_q(\mathcal{P}_1,\mathcal{P}_2)$, where $E^{\times}_q(B,C)$ is defined between two finite sets $B,C\subseteq \{n \in \mathbb{Z} \colon (n, q) = 1\}$ by
\begin{align*}
E_q^{\times}(B,C)=\sum_{\substack{b_1,b_2\in B, c_1,c_2\in C\\
b_1c_1\equiv b_2c_2\pmod q}} 1.
\end{align*}

Then by~\eqref{eq:multen}
\begin{align}
\label{eq:E2qmult}
\left|E_2(q) \cap a_0a_{j_0} H\right| \geq \frac{(|\mathcal{P}_1||\mathcal{P}_2|)^2}{E_q^{\times}(\mathcal{P}_1,\mathcal{P}_2)}.    
\end{align}
Now we show an upper bound for $E_q^{\times}(\mathcal{P}_1,\mathcal{P}_2)$. Let $\lambda_d^+$ be as in Lemma~\ref{le:linear} with level $D = q^\theta$ and $s=1$ (so that $z = D$).

Then by Lemma~\ref{le:linear}(ii)
\begin{align*}
E_q^{\times}(\mathcal{P}_1,\mathcal{P}_2)\leq \sum_{p_1,q_1\in \mathcal{P}_1, p_2 \in \mathcal{P}_2,n_2\leq q}\,\,\sum_{d \mid n_2} \lambda_d^+ 1_{p_1p_2\equiv q_1 n_2 \pmod q}.   
\end{align*}
Thus by the orthogonality of characters
\begin{align}
\label{eq:chibound}
E_q^{\times}(\mathcal{P}_1,\mathcal{P}_2) \leq \frac{1}{\varphi(q)}\sum_{\chi\pmod q}\left|\sum_{p\in \mathcal{P}_1} \chi(p)\right|^2 \sum_{p\in \mathcal{P}_2}\chi(p) \overline{\sum_{n\leq q}\chi(n) \sum_{d \mid n} \lambda_d^+}.    
\end{align}

We first consider the contribution of $\chi=\chi_0$ to~\eqref{eq:chibound}. This is by~\eqref{eq:Simplegcd}, Lemma~\ref{le:linear}(iii) and~\eqref{eq:P2lowCons}
\begin{align}
\begin{aligned}
\label{eq:princE2}
&\leq \frac{1}{\varphi(q)}|\mathcal{P}_1|^2 |\mathcal{P}_2| \left|\sum_{\substack{d \leq D \\ (d, q) = 1}} \lambda_d^+ \left(\frac{\varphi(q)}{d} + O(\tau(q))\right) \right|\leq \frac{1}{\varphi(q)}(2+o(1))|\mathcal{P}_1|^2 |\mathcal{P}_2| \frac{q}{\log z}\\ 
&\leq \frac{2+o(1)}{\theta}|\mathcal{P}_1|^2 |\mathcal{P}_2| \frac{q}{\varphi(q) \log q} \\
&\leq
\begin{cases}
\frac{1}{\varphi(q)}\left(3+100\varepsilon\right) |\mathcal{P}_1|^2|\mathcal{P}_2|^2 & \text{when $k = 0$;} \\
\frac{1}{\varphi(q)}\left(\frac{45}{7}+100\varepsilon\right) |\mathcal{P}_1|^2|\mathcal{P}_2|^2 & \text{when $k = 1$;} \\
\frac{1}{\varphi(q)}\left(9+100\varepsilon\right) |\mathcal{P}_1|^2|\mathcal{P}_2|^2 & \text{when $k = 2$.}
\end{cases}
\end{aligned}
\end{align}

We shall show that
\begin{align}
\label{eq:E2error}
\frac{1}{\varphi(q)} \sum_{\substack{\chi\pmod q \\ \chi \neq \chi_0}}\left|\sum_{p\in \mathcal{P}_1} \chi(p)\right|^2 \left|\sum_{p\in \mathcal{P}_2}\chi(p)\right| \left|\sum_{n\leq q}\chi(n) \sum_{d \mid n} \lambda_d^+\right| \ll \frac{|\mathcal{P}_1|^2|\mathcal{P}_2|^2}{(\log q)^{20}\varphi(q)}.
\end{align}
Before proving~\eqref{eq:E2error}, let us show how it implies Theorem~\ref{thm:E2>1/2}(i)--(ii). Combining~\eqref{eq:E2error} with~\eqref{eq:chibound} and the evaluation of the contribution of the principal character in~\eqref{eq:princE2}, we obtain
\[
E_q^{\times}(\mathcal{P}_1,\mathcal{P}_2) \leq \begin{cases}
\frac{1}{\varphi(q)}\left(3+101\varepsilon\right) |\mathcal{P}_1|^2|\mathcal{P}_2|^2 & \text{when $k = 0$;} \\
\frac{1}{\varphi(q)}\left(\frac{45}{7}+101\varepsilon\right) |\mathcal{P}_1|^2|\mathcal{P}_2|^2 & \text{when $k = 1$;} \\
\frac{1}{\varphi(q)}\left(9+101\varepsilon\right) |\mathcal{P}_1|^2|\mathcal{P}_2|^2 & \text{when $k = 2$.}
\end{cases}
\]
This together with~\eqref{eq:E2qmult},~\eqref{eq:a0ajonot}, and~\eqref{eq:E2capcupsize} implies
\[
|E_2(q)| \geq 
\begin{cases}
\left(\frac{1}{2}-\varepsilon\right) \varphi(q) + \left(\frac{1}{3}-200\varepsilon\right) \varphi(q)  & \text{if $k = 0$;} \\
\left(\frac{3}{5}-\varepsilon\right) \varphi(q) + \left(\frac{7}{45}-200\varepsilon\right) \varphi(q) & \text{if $k = 1$;} \\
\left(\frac{5}{8}-\varepsilon\right) \varphi(q) + \left(\frac{1}{9}-200\varepsilon\right) \varphi(q)  & \text{if $k = 2$}.
\end{cases}
\]
In each case the claim follows on taking $\varepsilon>0$ small enough (since $1/2+1/3,3/5+7/45,5/8+1/9>11/16$).

Let us now turn to proving the remaining claim~\eqref{eq:E2error}. We argue somewhat similarly as in the proof of Proposition~\ref{prop:transfer}(iv). We write
\[
\mathcal{X}_1 = \left\{\chi \pmod{q} \colon \left|\sum_{p \in \mathcal{P}_1} \chi(p)\right| \leq \frac{|\mathcal{P}_1|}{(\log q)^{20}} \right\}
\]
and
\[
\mathcal{X}_2 = \left\{\chi \pmod{q} \colon \chi \neq \chi_0 \text{ and } \left|\sum_{p \in \mathcal{P}_1} \chi(p)\right| > \frac{|\mathcal{P}_1|}{(\log q)^{20}} \right\}.
\]
By the definition of $\mathcal{X}_1$, the Cauchy--Schwarz inequality, the mean value theorem (Lemma~\ref{le:MVT}) and~\eqref{eq:P2lowCons}, we have
\begin{align*}
&\frac{1}{\varphi(q)} \sum_{\substack{\chi \in \mathcal{X}_1}}\left|\sum_{p\in \mathcal{P}_1} \chi(p)\right|^2 \left|\sum_{p\in \mathcal{P}_2}\chi(p)\right| \left|\sum_{n\leq q}\chi(n) \sum_{d \mid n} \lambda_d^+\right| \\
&\ll \frac{1}{\varphi(q)} \frac{|\mathcal{P}_1|^2}{(\log q)^{40}}  \left(\sum_{\substack{\chi\pmod q}} \left|\sum_{p\in \mathcal{P}_2}\chi(p)\right|^2\right)^{1/2} \left(\sum_{\chi \pmod{q}} \left|\sum_{n\leq q}\chi(n) \sum_{d \mid n} \lambda_d^+\right|^2\right)^{1/2} \\
&\ll \frac{1}{\varphi(q)} \frac{|\mathcal{P}_1|^2}{(\log q)^{40}}  (q |\mathcal{P}_2|)^{1/2} (q \cdot q \log q)^{1/2} \ll \frac{|\mathcal{P}_1|^2 |\mathcal{P}_2|^2}{(\log q)^{20}\varphi(q)}.
\end{align*}
For $\chi \in \mathcal{X}_2$, we note that by the Burgess bound (Lemma~\ref{le:Burgess}), for some $\eta > 0$,
\[
\left|\sum_{n\leq q}\chi(n) \sum_{d \mid n} \lambda_d^+\right| = \left|\sum_{d \leq q^\theta} \lambda_d^+ \chi(d) \sum_{n\leq q/d}\chi(n) \right| \ll q^{1-\eta}.
\]
Hence, choosing $L = \lceil \frac{\log q}{\log y}\rceil$ and applying H\"older's inequality, we obtain
\begin{align*}
&\frac{1}{\varphi(q)} \sum_{\substack{\chi \in \mathcal{X}_2}}\left|\sum_{p\in \mathcal{P}_1} \chi(p)\right|^2 \left|\sum_{p\in \mathcal{P}_2}\chi(p)\right| \left|\sum_{n\leq q}\chi(n) \sum_{d \mid n} \lambda_d^+\right| \\
&\ll   \frac{(q^{1-\eta})^{2/L}}{\varphi(q)} \left(\sum_{\substack{\chi \in \mathcal{X}_2}}\left|\sum_{p\in \mathcal{P}_1} \chi(p)\right|^{2L}\right)^{\frac{1}{L}} \left(\sum_{\substack{\chi \in \mathcal{X}_2}}\left|\sum_{p\in \mathcal{P}_2} \chi(p)\right|^{2}\right)^{1/2}  \left(\sum_{\substack{\chi \in \mathcal{X}_2}}\left|\sum_{n\leq q}\chi(n) \sum_{d \mid n} \lambda_d^+\right|^2\right)^{\frac{1}{2}-\frac{1}{L}}.
\end{align*}
Applying Lemma~\ref{le:MVT} to each mean square and using~\eqref{eq:P1size} with $\eta' = \eta/(10L)$, we see that the above expression is
\[
\ll \frac{(q^{1-\eta})^{2/L}}{\varphi(q)} \left(y^{2L} (\log q)^{L-1}\right)^{\frac{1}{L}} (q |\mathcal{P}_2|)^{1/2} \left(q \cdot q \log q \right)^{\frac{1}{2}-\frac{1}{L}} \ll \frac{|\mathcal{P}_1|^2 |\mathcal{P}_2|^2}{q^{\eta/L} \varphi(q)}.
\]
This completes the proof of~\eqref{eq:E2error}.
\end{proof}

\section*{Acknowledgments}
KM was supported by Academy of Finland grant no. 285894. JT was supported by Academy of Finland grant no. 340098, a von Neumann Fellowship (NSF grant no. DMS-1926686), and funding from
the European Union's Horizon
Europe research and innovation programme under Marie Sk\l{}odowska-Curie grant agreement no.
101058904. The authors would like to thank Igor Shparlinski for useful discussions and for pointing out the paper~\cite{Zhao}. The authors are also grateful to the anonymous referee for helpful comments and corrections.

\bibliography{refs}
\bibliographystyle{plain}
\end{document}